\documentclass[11pt,leqno]{amsart}

\usepackage{amsmath}
\usepackage{amssymb}
\usepackage{a4wide}
\usepackage{bbm}
\usepackage{graphics}
\usepackage{epsfig}
\usepackage{color}
\usepackage{mathrsfs}
\usepackage[sans]{dsfont}
\parskip = 0.05 in



\newtheorem{theorem}{Theorem}[section]
\newtheorem{lemma}[theorem]{Lemma}
\newtheorem{proposition}[theorem]{Proposition}

\newtheorem{remark}[theorem]{Remark}
\newtheorem{corollary}[theorem]{Corollary}

\newcounter{hypo}
\newcounter{hypoa}

\newcounter{hypoaa}
\newcounter{hypobb}

%






\def\C{{\mathbb C}}

\def\N{{\mathbb N}} 
\def\R{{\mathbb R}}



\def\CD{\mathcal {D}}

\def\CO{\mathcal {O}}
\def\CP{\mathcal {P}}

\def\CS{\mathcal {S}}

\def\SB{\mathscr {B}}

\def\SD{\mathscr {D}}

\def\ker{\mathop{\rm Ker}\nolimits}


\newcommand{\nequiv}{\equiv \!\!\!\!\!\!\;\! / \,\,}

\def\one{\mathds{1}}
\def\re{\mathop{\rm Re}\nolimits}
 \def\im{\mathop{\rm Im}\nolimits}

\renewcommand{\div}{\operatorname{div}}

\newcommand{\Vect}{\operatorname{Vect}}

\newcommand{\supp}{\operatorname{supp}}

\def\dist{\mathop{\rm dist}\nolimits}

\def\hess{\mathop{\rm Hess}\nolimits}

\def\<{\langle}
\def\>{\rangle}





\makeatletter
 \@addtoreset{equation}{section}
 \makeatother
 

\title{Real diffusion with complex spectral gap}

\author[J.-F. Bony]{Jean-Fran\c{c}ois Bony}
\address{Jean-Fran\c{c}ois Bony, IMB, CNRS (UMR 5251), Universit\'e de Bordeaux, 33405 Talence, France}
\email{bony@math.u-bordeaux.fr}
\author[L. Michel]{Laurent Michel}
\address{Laurent Michel, IMB, CNRS (UMR 5251), Universit\'e de Bordeaux, 33405 Talence, France}
\email{lamichel@math.u-bordeaux.fr}

\keywords{}
\subjclass[2010]{}



\begin{document}

\begin{abstract}
The low-lying eigenvalues of the generator of a Langevin process are known to satisfy the Eyring--Kramers law in the low temperature regime under suitable assumptions. These eigenvalues are generically real. We construct generators whose spectral gap is given by non-real eigenvalues or by a real eigenvalue having a Jordan block.
\end{abstract}

\maketitle

\section{Introduction} \label{s0}

The generator of a diffusion process is generally a differential operator of order two with real coefficients. In the last decades, the asymptotic of its low-lying eigenvalues has been obtained \cite{BoLeMi22_01,BoGaKl05_01,DiLeLeNe19_01,HeKlNi04_01,HeHiSj08-2,HeHiSj11_01,LaMaSe19,LePMi20} in the low temperature regime (Eyring--Kramers law), see \cite{Be13} for a general presentation. These results provide sharp informations on metastability or on return to equilibrium. For reversible processes, the generator is a self-adjoint operator on an appropriate Hilbert space and then its spectrum is always real. For irreversible processes, the generator is no longer self-adjoint on the natural Hilbert space and one can hope to observe non-real eigenvalues or Jordan's blocks. But, as recalled at the end of this part, there are strong constrains on the low-lying spectrum of generators which make such phenomena unlikely and explain why non-real spectra have not been obtained up to now. The goal of this paper is to construct generators with pathologic spectral gap.

We first discuss spectral properties of generators in the general setting of \cite{BoLeMi22_01} and send the reader to this paper for precise statements and to the references of the previous paragraph for slightly different settings. In \cite{BoLeMi22_01}, we consider the operator on $L^{2} ( \R^{d} )$
\begin{equation} \label{a80}
P = - h \div\circ A \circ h \nabla + \frac{1}{2} \big( b \cdot h \nabla + h \div \circ b \big) + c ,
\end{equation}
where the symmetric matrix $A = ( a_{j , k} ( x , h ) )_{j , k}$, the vector field $b = ( b_{j} ( x , h ) )_{j}$ and the function $c ( x , h )$ are smooth and real-valued. Moreover, these functions are symbols and have an asymptotic expansion in power of the parameter $h$, which is proportional to the temperature. We assume that $P$ has an invariant distribution which has a Gibbs form. More precisely, there exists a confining smooth Morse function $f$ such that
\begin{equation*}
P ( e^{- f / h} ) = P^{*} ( e^{- f /h} ) = 0 .
\end{equation*}
Let $1 \leq n_{0} < + \infty$ denote the number of minima of $f$. Hypoelliptic and hypocoercive assumptions are also made. Under these assumptions, $P$ is maximal accretive and has domain
\begin{equation*}
\CD ( P ) = \{ u \in L^{2} ( \R^{d} ) ; \, P u \in L^{2} ( \R^{d} ) \} ,
\end{equation*}
as proved in Section 3 of \cite{HeHiSj08-2}. The evolution equation naturally associated to $P$ is the heat (or Fokker--Planck) equation
\begin{equation} \label{a37}
\left\{ \begin{aligned}
&h \partial_{t} u ( t , x ) = - P u ( t , x ) , \\
&u ( 0 , x ) = u_{0} ( x ) ,
\end{aligned} \right.
\end{equation}
where $u_{0} ( x ) \in L^{2}( \R^{2} )$ is the initial data. The low-lying spectrum of $P$ is given by the following result (see Theorem 3 of \cite{BoLeMi22_01}).

\begin{theorem}[Eyring--Kramers law]\sl \label{a81}
There exists $\lambda_{*} > 0$ such that, for $h$ small enough, $P$ has exactly $n_{0}$ eigenvalues counted with their algebraic multiplicity $\lambda_{1} ( h ) , \ldots , \lambda_{n_{0}} ( h )$ in $\{ z \in \C ; \ \re z \leq \lambda_{*} h \}$. Moreover, $\lambda_{1} ( h ) = 0$ is simple with $\ker P = e^{- f / h} \C$. For $n = 2 , \ldots , n_{0}$, the eigenvalue $\lambda_{n} ( h )$ satisfies the asymptotic
\begin{equation*}
\lambda_{n} ( h ) = a_{n} ( h ) h e^{- 2 S_{n} / h} \qquad \text{with} \qquad a_{n} ( h ) \simeq \sum_{j \geq 0} a_{n}^{j} h^{j} ,
\end{equation*}
$S_{n} = f ( s_{n} ) - f ( m_{n} ) > 0$ for some particular saddle point $s_{n}$ and minimum $m_{n}$, $a_{n}^{0} \neq 0$ explicitly known and $a_{n}^{j} \in \R$ for all $j \neq 0$.
\end{theorem}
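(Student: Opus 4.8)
The plan is to run the classical Eyring--Kramers scheme --- developed for Witten Laplacians and reversible diffusions in \cite{HeKlNi04_01,LePMi20} --- in the non-self-adjoint setting of the operator \eqref{a80}; this is precisely the proof of Theorem~3 of \cite{BoLeMi22_01}. First I would carry out a rough localization of the small spectrum. Using that $P$ is maximal accretive, together with the hypoelliptic and hypocoercive a priori estimates of \cite{HeHiSj08-2}, one proves a resolvent bound $\| ( P - z )^{- 1} \| \leq C / h$ on $\{ \re z \leq \lambda_{*} h \} \cap \{ | z | \geq C_{0} h \}$ for $C_{0}$ large; this plays the role of a spectral gap away from the cluster of eigenvalues near the origin. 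Combined with a dimension count against the $n_{0}$ quasimodes constructed below, it yields that $P$ has exactly $n_{0}$ eigenvalues with algebraic multiplicity in $\{ \re z \leq \lambda_{*} h \}$, all lying in a disc of radius $O ( h )$. That $0$ is one of them --- algebraically simple, with $\ker P = \C e^{- f / h}$ --- follows from $P ( e^{- f / h} ) = P^{*} ( e^{- f / h} ) = 0$: the kernel of the generator of a hypoelliptic, hence irreducible, diffusion is one-dimensional, and if $P u = e^{- f / h}$ had a solution then $\langle e^{- f / h} , e^{- f / h} \rangle = \langle P^{*} e^{- f / h} , u \rangle = 0$, which is absurd, so no Jordan block can occur at $0$.

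Second I would reduce the problem to an $n_{0} \times n_{0}$ matrix. Running the combinatorial labelling of the minima from \cite{HeKlNi04_01} attaches to each minimum $m_{n}$, $n = 2 , \dots , n_{0}$, a separating saddle $s_{n}$ and a height $S_{n} = f ( s_{n} ) - f ( m_{n} ) > 0$ --- these being the objects in the statement --- and one orders the minima so that $S_{2} \leq \dots \leq S_{n_{0}}$. One then builds quasimodes: approximate eigenfunctions of $P$ concentrated near the minima, essentially $\chi_{n} e^{- f / h}$ normalized in $L^{2}$, together with dual quasimodes for $P^{*}$ attached to the connected components of the appropriate sub-level sets of $f$. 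Assembled into maps $R_{+} : \C^{n_{0}} \to L^{2}$ and $R_{-} : L^{2} \to \C^{n_{0}}$, these render the Grushin matrix $\left( \begin{smallmatrix} P - z & R_{+} \\ R_{-} & 0 \end{smallmatrix} \right)$ invertible, with Schur complement $E_{- +} ( z )$ a small matrix whose zeros are exactly the eigenvalues of $P$ near $0$; in particular the associated spectral subspace has dimension $n_{0}$. Writing $E_{- +} ( z ) = E_{- +} ( 0 ) + z \, \partial_{z} E_{- +} ( 0 ) + O ( z^{2} )$, with accurate enough quasimodes $\partial_{z} E_{- +} ( 0 )$ is invertible, and $M ( h ) := - \partial_{z} E_{- +} ( 0 )^{- 1} E_{- +} ( 0 )$ represents the restriction of $P$ to that subspace up to errors exponentially smaller than its entries.

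Third, and this is the heart of the matter, I would compute the entries of $M ( h )$ by Laplace's method, the dominant contributions coming from neighbourhoods of the saddles $s_{n}$. There one must first improve the quasimodes by solving a local eikonal and transport problem for the non-self-adjoint operator; this replaces the Hessian form $\tfrac{1}{2} \hess f ( s_{n} )$ by a modified quadratic form built from $\hess f ( s_{n} )$, $A ( s_{n} , 0 )$ and the antisymmetric part of the drift $b ( s_{n} , 0 )$, a form possessing a single negative eigenvalue $\mu_{n}^{-} < 0$. The outcome is that $M ( h )$ is, at leading exponential order, block-triangular in the labelling order, with diagonal entries $a_{n} ( h ) h e^{- 2 S_{n} / h}$ where $a_{n}^{0}$ is the announced nonzero \emph{real} constant --- an explicit product of $| \mu_{n}^{-} |$, a square root of $\det \hess f ( m_{n} ) / | \det \hess f ( s_{n} ) |$ and pertinent values of $A$ --- and off-diagonal entries that either carry strictly larger exponential weights or sit below the diagonal.

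Finally I would read off the eigenvalues and their reality. From the block-triangular leading structure the $n$-th eigenvalue of $M ( h )$, hence $\lambda_{n} ( h )$, equals $a_{n}^{0} h e^{- 2 S_{n} / h} ( 1 + o ( 1 ) )$, and a Rouch\'e argument in the disc of radius $\tfrac{1}{2} | a_{n}^{0} | h e^{- 2 S_{n} / h}$ about the \emph{real} centre $a_{n}^{0} h e^{- 2 S_{n} / h}$ shows it is the unique eigenvalue of $P$ there and supplies its classical expansion $\lambda_{n} ( h ) = a_{n} ( h ) h e^{- 2 S_{n} / h}$, $a_{n} ( h ) \simeq \sum_{j \geq 0} a_{n}^{j} h^{j}$. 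Since $A$, $b$ and $c$ are real-valued, $P$ has real coefficients, so $\overline{\lambda_{n} ( h )}$ is again an eigenvalue, lying in the same conjugation-symmetric disc; uniqueness forces $\overline{\lambda_{n} ( h )} = \lambda_{n} ( h )$, whence $\lambda_{n} ( h )$ is real and, $h e^{- 2 S_{n} / h}$ being positive, every $a_{n}^{j}$ is real. I expect the main obstacle to be exactly the third step and its aftermath: in the non-self-adjoint case the local model at a saddle is a non-normal quadratic operator, whose resolvent and the attached oscillatory integrals have to be controlled to identify the prefactor $a_{n}^{0}$, and --- more delicately --- one must check that the labelled block-triangular structure of $M ( h )$ survives with errors exponentially small \emph{relative} to the gaps between the $S_{n}$, which is what makes the Rouch\'e step and the reality argument work. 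When those gaps degenerate --- equal heights $S_{n}$, or one saddle shared by several wells, so that $M ( h )$ can no longer be put in triangular form --- this mechanism breaks down; engineering $f$, $A$ and $b$ for which it does is the content of the rest of the paper, and is what produces a non-real spectral gap or a Jordan block.
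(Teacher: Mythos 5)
The paper does not prove Theorem \ref{a81} at all: it is quoted verbatim from Theorem 3 of \cite{BoLeMi22_01}, so the only fair comparison is with that reference, whose overall strategy (accretivity and resolvent bounds away from an $O(h)$ disc, Grushin reduction to an $n_{0}\times n_{0}$ interaction matrix built from quasimodes attached to the minima, Laplace asymptotics at the separating saddles) your first three steps reproduce faithfully.

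There is, however, a genuine gap in your last step, and it concerns precisely the point this paper is about. The theorem does \emph{not} assert that the $\lambda_{n}(h)$ are real; it only asserts that the coefficients $a_{n}^{j}$ of the asymptotic expansion are real, and indeed Remark \ref{a84} together with Theorem \ref{a20} shows that operators covered by Theorem \ref{a81} can have genuinely non-real small eigenvalues (with imaginary part $\CO(h^{\infty})$ relative to the real part). Your argument derives the reality of the $a_{n}^{j}$ from the reality of $\lambda_{n}(h)$, obtained by a Rouch\'e/uniqueness argument in a conjugation-symmetric disc around $a_{n}^{0}h e^{-2S_{n}/h}$; this requires that $\lambda_{n}$ be the \emph{unique} eigenvalue in that disc, i.e.\ that its expansion be separated from all the others. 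That hypothesis is not part of the theorem and fails exactly in the degenerate configurations studied here (the $R$-symmetric triple well of Section \ref{s2} has $\lambda_{2}=\lambda_{3}$ with identical expansions, and after an $S(h^{\infty})$ perturbation, still of the form \eqref{a80}, the pair becomes complex conjugate). So as written your step 4 would prove a false statement in general. The correct mechanism, used in \cite{BoLeMi22_01} (Section 6, as recalled after \eqref{a75}), is structural: every term in the asymptotic expansion of the interaction matrix is real symmetric (self-adjoint), because the quasimodes can be chosen real and the antisymmetric contributions of the drift are $\CO(h^{\infty})$ relative to the symmetric ones; diagonalizing this expansion order by order yields real $a_{n}^{j}$ for the cluster of eigenvalues attached to each $(S_{n},a_{n}^{0})$, without ever claiming that the eigenvalues themselves are real or simple. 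Your conjugation-symmetry observation \eqref{a82} only gives that the \emph{set} of eigenvalues is stable under conjugation, which is Remark \ref{a83}, not reality of each $\lambda_{n}$. With step 4 replaced by this argument, the rest of your outline is in line with the reference.
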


Note that the first eigenvalue $\lambda_{1} = 0$ is always real. Since all the coefficients $a_{n}^{j}$ are real, it is not possible to use the Eyring--Kramers law to construct an operator with non-real small eigenvalues. Moreover, the imaginary part of $\lambda_{n}$ is always extremely small. More precisely,

\begin{remark}\sl \label{a84}
For all $n = 1 , \ldots , n_{0}$, we have
\begin{equation} \label{a85}
\vert \im \lambda_{n} \vert = \CO ( h^{\infty} ) \re \lambda_{n} .
\end{equation}
\end{remark}

On the other hand, the particular form \eqref{a80} of the generator $P$ induces symmetries on its spectrum, as remarked on page 15 of \cite{LePMi20}. More precisely, since the coefficients of $P$ are real-valued and the domain of $P$ is stable by complex conjugation, we get
\begin{equation} \label{a82}
\overline{( P - \lambda ) u} = ( P - \overline{\lambda} ) \overline{u} ,
\end{equation}
for all $\lambda \in \C$ and $u \in \CD ( P )$. This implies the following property which is also satisfied for $\mathcal{PT}$-symmetric operators (see for instance \cite{BeBoMeRaSj16_01} for the bifurcation of eigenvalues from the real axis to the complex plane).

\begin{remark}\sl  \label{a83}
The spectrum of $P$ is invariant by complex conjugation.
\end{remark}

In particular, when $f$ has exactly two minima, $P$ has two small eigenvalues $\lambda_{1} = 0$ and $\lambda_{2}$ by Theorem \ref{a81}. Since $\lambda_{2} = \overline{\lambda_{2}}$ by Remark \ref{a83}, these two small eigenvalues are always real and simple for $h$ small enough (see Remark 1.10 of \cite{LePMi20}).

More generally, if the asymptotic expansion of $\lambda_{n}$ given by the Eyring--Kramers law is different from that of the other eigenvalues, then $\lambda_{n}$ is real and simple for $h$ small enough. As an example, if the (Arrhenius) exponential factors $S_{n}$ are all different, then all the small eigenvalues $\lambda_{n}$ are real and simple for $h$ small enough. This shows that the exponentially small eigenvalues of the generator of a diffusion as in \eqref{a80} are generically real.

We now construct operators of the form \eqref{a80} with non-real small eigenvalues or Jordan blocks. From the two previous paragraphs, the associated Morse function $f$ must have at least $3$ minima and some of exponential factors $S_{n} = f ( s_{n} ) - f ( m_{n} )$ must coincide.

\section{Statement of the results} \label{s1}


\begin{figure}
\begin{center}
\begin{picture}(0,0)%
\includegraphics{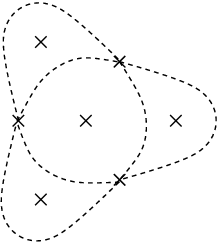}%
\end{picture}%
\setlength{\unitlength}{1184sp}%
\begingroup\makeatletter\ifx\SetFigFont\undefined%
\gdef\SetFigFont#1#2#3#4#5{%
  \reset@font\fontsize{#1}{#2pt}%
  \fontfamily{#3}\fontseries{#4}\fontshape{#5}%
  \selectfont}%
\fi\endgroup%
\begin{picture}(5795,6411)(1314,-7196)
\put(6076,-4486){\makebox(0,0)[b]{\smash{{\SetFigFont{9}{10.8}{\rmdefault}{\mddefault}{\updefault}$m_{1}$}}}}
\put(2476,-2386){\makebox(0,0)[b]{\smash{{\SetFigFont{9}{10.8}{\rmdefault}{\mddefault}{\updefault}$m_{3}$}}}}
\put(2476,-6586){\makebox(0,0)[b]{\smash{{\SetFigFont{9}{10.8}{\rmdefault}{\mddefault}{\updefault}$m_{2}$}}}}
\put(4426,-5161){\makebox(0,0)[b]{\smash{{\SetFigFont{9}{10.8}{\rmdefault}{\mddefault}{\updefault}$s_{1}$}}}}
\put(2251,-4111){\makebox(0,0)[b]{\smash{{\SetFigFont{9}{10.8}{\rmdefault}{\mddefault}{\updefault}$s_{2}$}}}}
\put(4426,-2911){\makebox(0,0)[b]{\smash{{\SetFigFont{9}{10.8}{\rmdefault}{\mddefault}{\updefault}$s_{3}$}}}}
\put(3601,-4636){\makebox(0,0)[b]{\smash{{\SetFigFont{9}{10.8}{\rmdefault}{\mddefault}{\updefault}$M$}}}}
\end{picture}%
$\qquad \qquad$ \includegraphics[width=110pt]{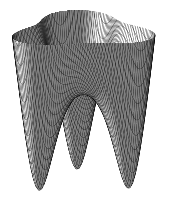}
\end{center}
\caption{The structure of the critical points of $f$ and an example of such a Morse function.} \label{f1}
\end{figure}

On $\R^{2}$, we consider a smooth Morse function $f$ with $f ( x ) = x^{2}$ outside a compact set and which is invariant under $R$, the rotation of angle $2 \pi / 3$ around $0$. Moreover, we assume that the set of critical points of $f$ consists of $3$ (global) minima $m_{1} , m_{2} , m_{3}$, $3$ saddle points $s_{1} , s_{2} , s_{3}$ and $1$ (local) maximum $M$ as in Figure \ref{f1}. Let $P_{0}$ be the Witten Laplacian associated to the function $f$, that is
\begin{equation} \label{k1}
P_{0} = d_{f}^{*} \circ d_{f} \qquad \text{with} \qquad d_{f} = e^{- f / h} \circ h \nabla \circ e^{f / h} = \begin{pmatrix}
h \partial_{x_{1}} + \partial_{x_{1}} f \\
h \partial_{x_{2}} + \partial_{x_{2}} f
\end{pmatrix} .
\end{equation}
A classical computation shows that this operator has the form
\begin{equation*}
P_{0} = - h^{2} \Delta + \vert \nabla f \vert^{2} - h \Delta f .
\end{equation*}
Since $f$ is a compactly supported perturbation of $x^{2}$, $P_{0}$ is self-adjoint on the domain of the harmonic oscillator $\SD ( P_{0} ) = H^{2} ( \R^{2} ) \cap \< x \>^{- 2} L^{2} ( \R^{2} )$, has a compact resolvent, $P_{0} \geq 0$ and
\begin{equation*}
\ker P_{0} = e^{- f / h} \C .
\end{equation*}
The spectrum of Witten Laplacians in such a geometric configuration has been studied in \cite[Section 7.4]{HeHiSj11_01}, \cite[Section 7C3]{Mi19_01} and \cite[Section 9.3]{LeNiVi20_01}. We send the reader to \cite{HeSj85_01} or to the second edition of the book \cite{CyFrKiSi87_01} for details on Witten Laplacians.

Throughout the paper, we set $S = f ( s ) - f ( m ) > 0$ and $\mu ( s ) < 0$ denotes the unique negative eigenvalue of $\hess f ( s )$. Since $f$ is invariant by rotation, these quantities do not depend on the minimum $m$ and the saddle point $s$ where they are computed. The bottom of the spectrum of $P_{0}$ is given by the following result.

\begin{proposition}[low eigenvalues of $P_{0}$]\sl \label{a30}
There exists $\lambda_{*} > 0$ such that, for $h$ small enough, $P_{0}$ has exactly three eigenvalues counted with multiplicity $\lambda_{1} ( h ) , \lambda_{2} ( h ) , \lambda_{3} ( h )$ in $] - \infty , \lambda_{*} h ]$. Moreover,
\begin{equation*}
\lambda_{1} = 0 , \qquad \lambda_{2} = \lambda_{3} \qquad \text{and} \qquad \lambda_{2} \sim \frac{3 \vert \mu ( s ) \vert \vert \det \hess f ( m ) \vert^{1 / 2}}{\pi \vert \det \hess f ( s ) \vert^{1 / 2}} h e^{- 2 S / h} .
\end{equation*}
\end{proposition}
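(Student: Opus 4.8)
We combine the general semiclassical theory of Witten Laplacians with the $3$-fold symmetry of $f$.

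\emph{Step 1 (counting and $\lambda_1$).} Since $f$ is a compactly supported Morse perturbation of $x^{2}$ with $n_{0}=3$ minima, and $P_{0}$ is of the form \eqref{a80} (with $A=\mathrm{Id}$, $b=0$, $c=|\nabla f|^{2}-h\Delta f$) and self-adjoint, Theorem~\ref{a81} --- or directly the classical analysis of Witten Laplacians, see \cite{HeSj85_01,HeHiSj11_01,Mi19_01,LeNiVi20_01} --- provides $\lambda_{*}>0$ such that, for $h$ small, $P_{0}$ has exactly three eigenvalues in $]-\infty,\lambda_{*}h]$ counted with multiplicity: $\lambda_{1}=0$ (simple, with eigenvector $e^{-f/h}$) and two exponentially small ones $0<\lambda_{2}\le\lambda_{3}$, with a gap of size $\sim h$ separating them from the rest of the spectrum. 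Let $F\subset L^{2}(\R^{2})$ be the corresponding $3$-dimensional spectral subspace.

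\emph{Step 2 ($\lambda_{2}=\lambda_{3}$, exactly).} The rotation $R$ induces a unitary $U$ on $L^{2}(\R^{2})$, $(Uu)(x)=u(R^{-1}x)$, with $U^{3}=\mathrm{Id}$; since $f\circ R=f$ we have $[P_{0},U]=0$, so $F$ is $U$-invariant and carries a $3$-dimensional representation of $\Z/3\Z$. As $P_{0}$ has real coefficients, $F$ is the complexification of a real subspace, so this representation is self-conjugate, hence isomorphic either to $3\,\C_{\mathbf 1}$ or to $\C_{\mathbf 1}\oplus\C_{\omega}\oplus\C_{\overline{\omega}}$, $\omega=e^{2i\pi/3}$. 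The first case is excluded: $F$ contains, up to $\CO(e^{-c/h})$, the $R$-equivariantly chosen Gaussian quasimodes $\psi_{1},\psi_{2}=U\psi_{1},\psi_{3}=U^{2}\psi_{1}$ localized near $m_{1},m_{2},m_{3}$, which are linearly independent and cyclically permuted by $U$, so $U\ne\mathrm{Id}$ on $F$. Therefore $F=\C e^{-f/h}\oplus F_{\omega}\oplus F_{\overline{\omega}}$ with $F_{\omega}=F\cap\ker(U-\omega)$ and $F_{\overline{\omega}}=F\cap\ker(U-\overline{\omega})$ one-dimensional, the trivial line being $\C e^{-f/h}$ (which is $U$-invariant and in $\ker P_{0}$). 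Since $P_{0}$ commutes with $U$, it acts as a scalar $\lambda_{2}$ on $F_{\omega}$ and as a scalar $\lambda_{3}$ on $F_{\overline{\omega}}$. Complex conjugation $Cu=\overline{u}$ commutes with $P_{0}$ and with $U$ and maps $F_{\omega}$ onto $F_{\overline{\omega}}$, so $\lambda_{3}=\overline{\lambda_{2}}$; but $P_{0}$ is self-adjoint, hence $\lambda_{2},\lambda_{3}\in\R$ and $\lambda_{2}=\lambda_{3}$.

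\emph{Step 3 (the asymptotics).} I would compute the common value via the supersymmetric structure. From \eqref{k1}, $P_{0}=P_{0}^{(0)}=d_{f}^{*}d_{f}$ intertwines with the $1$-form Witten Laplacian $P_{0}^{(1)}$ through $d_{f}P_{0}^{(0)}=P_{0}^{(1)}d_{f}$, and $P_{0}^{(1)}$ has exactly three eigenvalues in $]-\infty,\lambda_{*}h]$, one per saddle point; call $E^{(1)}$ the corresponding spectral subspace. Then $d_{f}(F)$ lies within $\CO(e^{-c/h})$ of $E^{(1)}$ and, $F$ being $P_{0}$-invariant, $P_{0}|_{F}$ coincides (up to $\CO(e^{-c/h})$) with the Gram operator $\CM^{*}\CM$ of $\CM=\Pi^{(1)}d_{f}\Pi^{(0)}\colon F\to E^{(1)}$. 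In the standard quasimodal bases --- Gaussian bumps $\propto e^{-(f-f(m_{j}))/h}$ at the minima for $F$, and $1$-forms $\propto e^{-(f-f(s_{k}))/h}u_{s_{k}}$ at the saddles, $u_{s_{k}}$ a unit eigenvector of $\hess f(s_{k})$ for $\mu(s_{k})$, for $E^{(1)}$ --- the entry of $\CM$ vanishes unless $m_{j}$ is one of the two minima joined by $s_{k}$, in which case it equals $\varepsilon_{k,j}\,\beta\,e^{-S/h}(1+\CO(h))$ with $\varepsilon_{k,j}\in\{\pm1\}$, where, by rotational symmetry, the prefactor is independent of $k,j$ and given by
\[
\beta^{2}=\frac{h\,\vert\mu(s)\vert}{\pi}\,\frac{\vert\det\hess f(m)\vert^{1/2}}{\vert\det\hess f(s)\vert^{1/2}} ,
\]
which is the usual Eyring--Kramers evaluation of the overlap integrals, as in \cite{HeHiSj11_01,Mi19_01,LeNiVi20_01}. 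The two signs attached to a fixed saddle are opposite, because $d_{f}e^{-f/h}=0$ and $e^{-f/h}$ corresponds to the all-ones vector in the minima basis, so each row of $\CM$ has (essentially) vanishing sum. Hence, to leading order, $\CM=\beta e^{-S/h}\,\delta$, where $\delta\colon\C^{3}\to\C^{3}$ is the coboundary operator of the triangle graph $K_{3}$ whose vertices are the minima and whose edges are the saddles. Therefore the eigenvalues of $P_{0}|_{F}=\CM^{*}\CM$ are, up to a factor $1+\CO(h)$, those of $\beta^{2}e^{-2S/h}$ times the combinatorial Laplacian of $K_{3}$, whose spectrum is $\{0,3,3\}$. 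The eigenvalue $0$ recovers $\lambda_{1}=0$, while the double eigenvalue $3$ gives
\[
\lambda_{2}=\lambda_{3}\sim 3\,\beta^{2}e^{-2S/h}=\frac{3\,\vert\mu(s)\vert\,\vert\det\hess f(m)\vert^{1/2}}{\pi\,\vert\det\hess f(s)\vert^{1/2}}\,h\,e^{-2S/h},
\]
which is the announced formula.

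\emph{Main obstacle.} The delicate point is Step~3: showing that the small spectrum of $P_{0}^{(0)}$ is faithfully encoded by $\CM^{*}\CM$ (spectral-gap estimates for $P_{0}^{(0)}$ and $P_{0}^{(1)}$, approximate orthonormality of the quasimodes), controlling the exponentially small ``non-adjacent'' overlaps, and pinning down the prefactor $\beta$ together with the $\CO(h)$ error in the overlap integrals. This is by now standard Witten-Laplacian technology (Helffer--Sjöstrand, Helffer--Klein--Nier, Le Peutrec) and is carried out, in this very symmetric configuration, in the references cited above; the only genuinely new observation is that the relevant combinatorial model is the complete graph $K_{3}$, whose nonzero Laplacian eigenvalue $3$ is responsible for the factor $3$.
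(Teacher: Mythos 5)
Your proposal is correct, and for the counting and the Eyring--Kramers asymptotics it proceeds exactly as the paper does (appeal to Theorem \ref{a81} and to \cite{Mi19_01,HeHiSj11_01,BoLeMi22_01}); the genuinely new point of Proposition \ref{a30} is the double multiplicity, and there you take a different route. The paper's Lemma \ref{a1} argues by contradiction: if $\lambda_{2}$ were simple, a real normalized eigenvector written in the almost orthonormal basis $\varphi_{j} = \Pi \psi_{j}$ of \eqref{a11} would satisfy $u \circ R = u$, hence have equal coordinates, contradicting orthogonality to $e^{- f / h}$ via \eqref{a8}. You instead decompose the $3$-dimensional spectral subspace into characters of $\Z / 3 \Z$: reality of the coefficients forces the multiplicities $( 3 , 0 , 0 )$ or $( 1 , 1 , 1 )$, the trivial case is excluded because the projected quasimodes are almost orthonormal and cyclically permuted by the rotation, the fixed line is $\C e^{- f / h}$, and complex conjugation exchanges the $\omega$ and $\overline{\omega}$ lines, forcing the two remaining real eigenvalues to coincide. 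Both arguments rest on the same three ingredients (rotation invariance, real coefficients, localized quasimodes), but yours is direct rather than by contradiction and in addition identifies the $\lambda_{2}$-eigenspace as the sum of the two nontrivial character spaces, a useful structural bonus; the paper's version is more elementary and stays entirely at the level of the quasimode coordinates. For the asymptotics, the paper simply quotes Section 7C3 of \cite{Mi19_01}, whereas you sketch the supersymmetric reduction to the interaction matrix $\CM$ and observe that the combinatorial model is the graph Laplacian of $K_{3}$, whose nonzero eigenvalue $3$ yields the factor $3$ and the announced prefactor; since you explicitly flag that the spectral-gap and quasimode estimates needed to justify this reduction are the standard Witten machinery of the quoted references, your level of rigor there matches the paper's own treatment. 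One small point to tighten: the exclusion of the trivial representation is cleanest if phrased through $U \varphi_{1} = \varphi_{2}$ with $\varphi_{j} = \Pi \psi_{j}$, since the $\psi_{j}$ themselves lie in the spectral subspace only up to $\CO ( e^{- \delta / h} )$.
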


This Proposition is mainly a consequence of previous results (see \cite{HeHiSj11_01,Mi19_01}). The unique novelty is that $\lambda_{2}$ has multiplicity two. This point and other spectral properties of $P_{0}$ are proved in Section \ref{s2}. Since $f$ is invariant under $R$, so are $P_{0}$ and all its eigenspaces.

We now construct an operator having a non-real spectral gap. For that, we perturb the operator $P_{0}$ by an anti-adjoint differential operator of order one. More precisely, we consider the operator
\begin{equation} \label{a77}
P_{\rm com} = P_{0} + B \qquad \text{with} \qquad B = \frac{1}{2} \big( b \cdot h \nabla + h \div \circ b \big) .
\end{equation}
We require that the vector-valued function $b ( x , h ) \in C^{\infty}_{0} ( \R^{2} ; \R^{2} )$ is a compactly supported real symbol of class $S ( h^{\infty} )$, where
\begin{equation*}
S ( r ) = \big\{ b ( x, h ) \in C^{\infty} ( \R^{2} ) ; \ \forall \alpha \in \N^{2} , \ \exists C_{\alpha} > 0 , \ \forall x \in \R^{2} , \ \forall h \in ] 0 , 1 ] , \ \vert \partial_{x}^{\alpha} b ( x , h ) \vert \leq C_{\alpha} r ( h ) \} ,
\end{equation*}
and $b \in S ( h^{\infty} )$ means that $b \in S ( h^{j} )$ for all $j \in \N$. In particular, $P_{\rm com}$ is closed on the domain $\SD ( P_{0} )$. We also assume that
\begin{equation} \label{a31}
B ( e^{- f / h} ) = 0.
\end{equation}
Then, the operator $P_{\rm com}$ enters into the setting of \eqref{a80}.

\begin{figure}
\begin{center}
\begin{picture}(0,0)%
\includegraphics{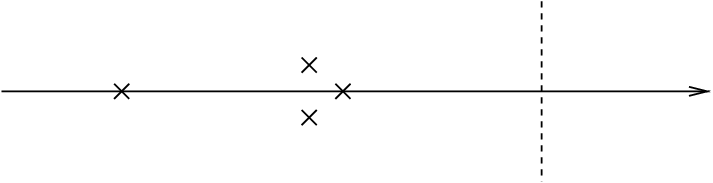}%
\end{picture}%
\setlength{\unitlength}{1579sp}%
\begingroup\makeatletter\ifx\SetFigFont\undefined%
\gdef\SetFigFont#1#2#3#4#5{%
  \reset@font\fontsize{#1}{#2pt}%
  \fontfamily{#3}\fontseries{#4}\fontshape{#5}%
  \selectfont}%
\fi\endgroup%
\begin{picture}(14241,3666)(-2432,-994)
\put(  1,239){\makebox(0,0)[b]{\smash{{\SetFigFont{9}{10.8}{\rmdefault}{\mddefault}{\updefault}$\mu_{1} = \lambda_{1} = 0$}}}}
\put(3751,1889){\makebox(0,0)[b]{\smash{{\SetFigFont{9}{10.8}{\rmdefault}{\mddefault}{\updefault}$\mu_{2}$}}}}
\put(3751,-286){\makebox(0,0)[b]{\smash{{\SetFigFont{9}{10.8}{\rmdefault}{\mddefault}{\updefault}$\mu_{3}$}}}}
\put(4876,239){\makebox(0,0)[b]{\smash{{\SetFigFont{9}{10.8}{\rmdefault}{\mddefault}{\updefault}$\lambda_{2} = \lambda_{3}$}}}}
\put(8626,1064){\makebox(0,0)[lb]{\smash{{\SetFigFont{9}{10.8}{\rmdefault}{\mddefault}{\updefault}$\lambda_{*} h / 2$}}}}
\put(11551,1064){\makebox(0,0)[lb]{\smash{{\SetFigFont{9}{10.8}{\rmdefault}{\mddefault}{\updefault}$\R$}}}}
\end{picture}%
\end{center}
\caption{The low-lying eigenvalues of $P_{0}$ and $P_{\rm com}$.} \label{f2}
\end{figure}

\begin{theorem}[Non-real eigenvalues]\sl \label{a20}
Let $r ( h ) = \CO ( h^{\infty} )$ be a positive function. There exists a function $b ( x , h ) \in C^{\infty}_{0}( \R^{2} ; \R^{2} ) \cap S ( r )$ with \eqref{a31} such that the spectrum of $P_{\rm com}$ satisfies
\begin{equation*}
\sigma ( P_{\rm com} ) \cap \{ z \in \C ; \ \re z < \lambda_{*} h / 2 \} = \big\{ \mu_{1} ( h ) , \mu_{2} ( h ) , \mu_{3} ( h ) \big\} ,
\end{equation*}
for $h$ small enough, with $\mu_{1} = 0$, $\mu_{2} = \lambda_{2} + \CO ( r )$, $\mu_{3}= \overline{\mu_{2}}$ and
\begin{equation}
\im \mu_{2} \neq 0 .
\end{equation}
\end{theorem}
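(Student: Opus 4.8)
The plan is to reduce the low-lying spectrum of $P_{\rm com}$ to a $3\times 3$ problem, use the rotational symmetry to diagonalise it explicitly, and then compute the relevant eigenvalue by first-order perturbation theory. Since $P_{\rm com}$ is of the form \eqref{a80}, Theorem \ref{a81} applies: for $h$ small $P_{\rm com}$ has exactly three eigenvalues, counted with algebraic multiplicity, in $\{\re z\le\lambda_* h\}$ (hence in $\{\re z<\lambda_* h/2\}$ for a suitable $\lambda_*$), and by \eqref{a31} one of them is $\mu_1 = 0$ with $e^{-f/h}\in\ker P_{\rm com}$. Let $\Pi$ be the Riesz projector onto these three eigenvalues and $E = \ran\Pi$, a three-dimensional $P_{\rm com}$-invariant subspace. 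Since $B$ is relatively bounded with respect to $P_0$ with relative bound $\CO(r/h)\to 0$ (the symbol $b$ is $\CO(h^\infty)$ and the resolvent of $P_0$ on the relevant contour is $\CO(1/h)$), one gets $\|\Pi-\Pi_0\|\to 0$, where $\Pi_0$ is the analogous projector for $P_0$.

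Next I would choose the perturbation to be $R$-equivariant, $b(Rx) = Rb(x)$. The constraint \eqref{a31} is equivalent to $\div(e^{-2f/h}b) = 0$, so I take $b = r\,e^{2f/h}\nabla^{\perp}\phi$ with $\phi\in C^{\infty}_0(\R^2)$ fixed, $R$-invariant, and supported near $M$; then $b\in C^{\infty}_0(\R^2;\R^2)\cap S(r)$, it satisfies \eqref{a31}, and it is $R$-equivariant, so $P_{\rm com}$ commutes with $R$, and hence so do $\Pi$ and $P_{\rm com}|_E$. By continuity from the structure of $P_0$ established in Section \ref{s2}, the unitary $R$ restricted to $E$ has the three distinct eigenvalues $1,\omega,\overline\omega$ ($\omega = e^{2i\pi/3}$) with one-dimensional eigenlines, so $P_{\rm com}|_E$ is diagonal in this basis and its eigenvalues are its three "weights". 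The weight of the trivial representation is $0$ (as $e^{-f/h}$ is $R$-invariant and lies in $\ker P_{\rm com}\cap E$), so $\mu_1 = 0$; call $\mu_2$ the $\omega$-weight and $\mu_3$ the $\overline\omega$-weight. Complex conjugation $\Gamma$ satisfies $\Gamma P_{\rm com} = P_{\rm com}\Gamma$ by \eqref{a82} and exchanges the $\omega$- and $\overline\omega$-eigenspaces of $R$, whence $\mu_3 = \overline{\mu_2}$. It remains to show $\im\mu_2\neq 0$.

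Let $e_\omega$ be the normalised $\omega$-eigenvector of $P_0$ in the two-dimensional eigenspace of $\lambda_2$. The $\omega$-line of $E$ is spanned by $\Pi e_\omega$ and is $P_{\rm com}$-invariant, so $\mu_2 = \langle P_{\rm com}\Pi e_\omega,\Pi e_\omega\rangle/\|\Pi e_\omega\|^2$, and a first-order expansion yields $\mu_2 = \lambda_2 + \langle Be_\omega,e_\omega\rangle + (\text{error})$. Since $B^*=-B$, one computes $\langle Be_\omega,e_\omega\rangle = i h\int_{\R^2} j_{e_\omega}\cdot b\,dx$ with $j_{e_\omega} = \im(\overline{e_\omega}\nabla e_\omega)$, so the first-order term is purely imaginary: $\re\mu_2 = \lambda_2 + \CO(r)$, in agreement with the statement, and $\im\mu_2 = h\int j_{e_\omega}\cdot b + (\text{error})$. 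The quantitative point is that, because $b$ is supported near $M$ and $e_\omega$ obeys Agmon-type decay away from the minima, both the main term and the error carry the exponential factor $e^{-2S_M/h}$ with $S_M = f(M)-f(m)>S$; after cancelling it, the error is smaller than the main term by a factor $\CO(r\,h^{-N})\to 0$. Hence $\im\mu_2\neq 0$ as soon as the leading coefficient of $\int j_{e_\omega}\cdot b$ is non-zero.

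That this leading coefficient can be made non-zero is the heart of the proof. Passing to $w = e^{f/h}e_\omega$, which solves $(-h^2\Delta + 2h\nabla f\cdot\nabla)w = \lambda_2 w$, one has $j_{e_\omega} = e^{-2f/h}\,\im(\overline{w}\nabla w)$, so
\begin{equation*}
\int j_{e_\omega}\cdot b = r\int \im(\overline w\,\nabla w)\cdot\nabla^\perp\phi\,dx = -\,2r\int \phi\,\det\big(\nabla\re w,\nabla\im w\big)\,dx .
\end{equation*}
It therefore suffices to show $\det(\nabla\re w,\nabla\im w)\not\equiv 0$, which is moreover an $R$-invariant function, so a suitable $R$-invariant $\phi$ exists; if it vanished identically, $\re w$ and $\im w$ would be everywhere functionally dependent, hence $w=\Phi\circ\rho$ locally for a real-valued $\rho$, and then $w\circ R^{-1}=\omega w$ would force $\rho\circ R^{-1}=H\circ\rho$ for a continuous self-map $H$ of an interval with $H^{\circ 3}=\mathrm{id}$, so $H=\mathrm{id}$, so $\rho$ is $R$-invariant and $w=\omega w\equiv 0$, a contradiction. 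Alternatively, the leading-order metastable profile $w\approx\one_{\CB_1}+\overline\omega\,\one_{\CB_2}+\omega\,\one_{\CB_3}$ transitions genuinely two-dimensionally near the triple point $M$, which gives the non-degeneracy with a quantitative lower bound. I expect the two genuine difficulties to be the error analysis of the third step — extracting the correct exponential scale from a precise localisation of $e_\omega$ — and making this last non-degeneracy rigorous, that is, controlling the leading behaviour of $w$ near $M$.
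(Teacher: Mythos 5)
Your reduction to the $3\times 3$ block, the $R$-equivariant choice of $b$ (which makes the low-lying block diagonal in the weight basis and gives $\mu_{1}=0$, $\mu_{3}=\overline{\mu_{2}}$ for free), and the stream-function form $e^{-2f/h}b=\mathrm{const}\cdot\nabla^{\perp}\phi$ are all sound, and in fact your $b$ is exactly the family produced by Lemma \ref{a2} (there $e^{-2f/h}b=h\nabla^{\perp}(ge^{-2f/h})$); likewise your observation that the first-order term reduces to $\int\phi\,\det(\nabla\re w,\nabla\im w)\,dx$ is the same quantity as \eqref{a21}. The divergence from the paper, and the genuine gap, is quantitative. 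First, a repairable slip: with $\phi$ fixed and supported near $M$, $b=r\,e^{2f/h}\nabla^{\perp}\phi$ is \emph{not} in $S(r)$ (the weight $e^{2f/h}$ is exponential on $\supp\phi$ and $f$ is not normalized to be $\leq 0$ there); you must shrink the prefactor, after which the effective size of the perturbation is no longer tied to $r$ --- which already undercuts the premise of your error analysis. Second, and decisively: your argument needs a uniform-in-$h$ \emph{lower} bound on the main term $\int\phi\,\det(\nabla\re w,\nabla\im w)\,dx$ for a fixed $\phi$ near $M$, together with an upper bound on the second-order error beating it. Neither exponent is established: the qualitative statement $\det(\nabla\re w,\nabla\im w)\nequiv 0$ (your functional-dependence/$\Z/3$ argument, or the paper's Lemma \ref{a16}) gives no information on \emph{where} the Jacobian is nonzero --- the point may drift with $h$ and need not lie in $\supp\phi$ --- nor on its size. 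Worse, at leading order the eigenfunctions are locally functions of a single WKB phase (near the saddles $w$ is affine in the transition function $v_{j}$, and near $M$ no expansion of $w$ is available at all), so the Jacobian is $\CO(h^{\infty})$ relative to its naive scale; its true magnitude and sign on a prescribed $h$-independent region are not accessible by the quasimode calculus you invoke, and your claimed scale $e^{-2S_{M}/h}$ and the bound ``error $=\CO(rh^{-N})\times$ main term'' are unsupported. You flag these two points as ``difficulties,'' but they are the whole theorem.

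This is precisely what the paper's proof is built to avoid. There, one first proves only the soft statement that the Jacobian is not identically zero, via nodal-set theory for the $\lambda_{2}$-eigenspace (Courant's theorem, Cheng's structure theorem, and the Courant minimum principle, Proposition \ref{a17} and Lemma \ref{a16}); then one places the bump $g$ in an $h$-dependent ball $B(x_{0}(h),\nu(h))$ where the Jacobian has a fixed sign, so that $\<\SB u,v\>=\beta(h)\neq 0$ with no lower bound required (Lemma \ref{a22}); finally one runs Kato's analytic perturbation theory in an auxiliary parameter $\varepsilon$ \emph{at fixed $h$}, choosing $\varepsilon(h)\leq\min(\varepsilon_{0}(h),\varepsilon_{1}(h),r(h))$ afterwards, so the first-order splitting $\pm i\gamma(h)\varepsilon$ dominates the $\CO_{h}(\varepsilon^{2})$ error trivially. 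The price is that $b$ is only partially explicit and may be much smaller than $r$ (as the paper states), but no eigenfunction asymptotics near $M$ are ever needed. If you allow your $\phi$ to depend on $h$ and replace your uniform error analysis by this fixed-$h$ perturbation argument, your proposal becomes essentially the paper's proof, with the $R$-equivariance trick as a pleasant variant for deducing $\mu_{3}=\overline{\mu_{2}}$ and simplicity; as written, however, the quantitative core is missing, and your local functional-dependence argument for the non-vanishing of the Jacobian would also need care at critical points of $\re w$ (the paper's nodal-set route handles this cleanly).
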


Here and in the sequel, $\sigma ( T )$ denotes the spectrum of the operator $T$ and the eigenvalues $\mu_{\bullet} ( h )$ are simple for $h$ small enough. The setting of Theorem \ref{a20} is illustrated in Figure \ref{f2}. The symbol $b ( x , h )$ is only partially explicit (see Lemma \ref{a2}, \eqref{a24} and \eqref{a28}). In particular, its size may be way more smaller than $r$. Then, the imaginary part of $\mu_{2}$ and $\mu_{3}$ is very small. But, as explained in \eqref{a85}, it is always the case in the general setting.

Theorem \ref{a20} is proved using the perturbation theory at fixed $h$ small enough. In particular, its proof shows that operators as in \eqref{a77} with a small enough anti-adjoint part $B$ have a non-real spectral gap as soon as the leading term coming from the perturbation theory does not vanish (see Lemma \ref{a22}). In this sense, the situation of Theorem \ref{a20} is generic.

For $h$ small enough, let $\Pi_{\mu_{j}}$ denote the spectral projection of $P_{\rm com}$ associated to the eigenvalue $\mu_{j}$. Using the Cauchy formula, it can be written
\begin{equation*}
\Pi_{\mu_{j}} = \frac{1}{2 i \pi} \oint_{\gamma} ( z - P_{\rm com} )^{- 1} d z ,
\end{equation*}
where $\gamma$ is a sufficiently small loop around $\mu_{j}$ positively oriented. The relations \eqref{a81} and \eqref{a31} give
\begin{equation} \label{a38}
\Pi_{\mu_{1}} = \frac{e^{- f ( x  ) / h}}{\Vert e^{- f / h} \Vert^{2}} \big\< e^{- f / h} , \cdot \big\> \qquad \text{and} \qquad \overline{\Pi_{\mu_{2}} u} = \Pi_{\mu_{3}} \overline{u} .
\end{equation}
Let $u ( x , h )$ be an eigenvector of $P_{\rm com}$ associated to the eigenvalue $\mu_{2}$. From \eqref{a38}, $\overline{u}$ is an eigenvector associated to the eigenvalue $\mu_{3}$. Then, $( \re u , \im u )$ is a basis of $\im \Pi_{\mu_{2}} \oplus \im \Pi_{\mu_{3}} = \ker ( P_{\rm com} - \mu_{2} ) \oplus \ker ( P_{\rm com} - \mu_{3} )$. In particular, $u$ cannot be a real (or purely imaginary) function. From Corollary 1.5 of \cite{BoLeMi22_01} and Theorem \ref{a20}, the solution of the evolution equation \eqref{a37} associated to $P_{\rm com}$ satisfies the following metastable behavior.

\begin{corollary}\sl \label{a29}
Consider $P_{\rm com}$ as in Theorem \ref{a20} with $h$ small enough. For all $u_{0} \in L^{2} ( \R^{2} )$, the solution $u = e^{- t P_{\rm com} / h} u_{0}$ of \eqref{a37} can be written
\begin{align}
e^{- t P_{\rm com} / h} u_{0} &= u_{1} + e^{- t \mu_{2} / h} u_{2} + e^{- t \mu_{3} / h} u_{3} + \varepsilon ( t )  \nonumber \\
&= u_{1} + e^{- t \re \mu_{2} / h} \big( \cos ( t \im \mu_{2} / h ) u_{\rm c} + \sin ( t \im \mu_{2} / h ) u_{\rm s} \big) + \varepsilon ( t ) , \label{a39}
\end{align}
with $u_{j} = \Pi_{\mu_{j}} u_{0}$ for $j= 1 , 2 , 3$, $u_{\rm c} = u_{2} + u_{3}$, $u_{\rm s} = i u_{3} - i u_{2}$ and
\begin{equation*}
\Vert \varepsilon ( t ) \Vert_{L^{2} ( \R^{2} )} \leq C e^{- t / C} \Vert u_{0} \Vert_{L^{2} ( \R^{2} )} ,
\end{equation*}
for some constant $C > 0$ independent of $t , h , u_{0}$.
\end{corollary}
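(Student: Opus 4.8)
The plan is to derive Corollary \ref{a29} directly from the spectral picture of Theorem \ref{a20} together with Corollary 1.5 of \cite{BoLeMi22_01}, which provides exactly this type of metastable decomposition for operators of the form \eqref{a80}. First I would invoke Corollary 1.5 of \cite{BoLeMi22_01} applied to $P_{\rm com}$: since $\sigma ( P_{\rm com} ) \cap \{ \re z < \lambda_{*} h / 2 \} = \{ \mu_{1} , \mu_{2} , \mu_{3} \}$ and the rest of the spectrum lies in $\{ \re z \geq \lambda_{*} h / 2 \}$, the semigroup decomposes as
\begin{equation*}
e^{- t P_{\rm com} / h} = \sum_{j = 1}^{3} e^{- t \mu_{j} / h} \Pi_{\mu_{j}} + \CO \big( e^{- t \lambda_{*} / ( 2 h )} \big)
\end{equation*}
in operator norm on $L^{2} ( \R^{2} )$, the remainder being bounded via the resolvent estimates on the vertical line $\re z = \lambda_{*} h / 2$ that are part of the hypocoercive package behind \eqref{a80}. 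Setting $u_{j} = \Pi_{\mu_{j}} u_{0}$ and absorbing the $h$-dependence (which is legitimate at fixed small $h$) into a constant $C$ gives the first line of \eqref{a39} with $\Vert \varepsilon ( t ) \Vert \leq C e^{- t / C} \Vert u_{0} \Vert$.

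The second step is purely algebraic: I would rewrite the pair of complex-conjugate contributions in real form. Since $\mu_{3} = \overline{\mu_{2}}$, write $\mu_{2} = \re \mu_{2} + i \im \mu_{2}$, so that
\begin{equation*}
e^{- t \mu_{2} / h} u_{2} + e^{- t \mu_{3} / h} u_{3} = e^{- t \re \mu_{2} / h} \big( e^{- i t \im \mu_{2} / h} u_{2} + e^{i t \im \mu_{2} / h} u_{3} \big) ,
\end{equation*}
and then expand the exponentials via Euler's formula and collect the cosine and sine terms. This yields exactly $e^{- t \re \mu_{2} / h} ( \cos ( t \im \mu_{2} / h ) ( u_{2} + u_{3} ) + \sin ( t \im \mu_{2} / h ) ( i u_{3} - i u_{2} ) )$, which is the second line of \eqref{a39} with $u_{\rm c} = u_{2} + u_{3}$ and $u_{\rm s} = i u_{3} - i u_{2}$. (One may additionally observe, using $\overline{\Pi_{\mu_{2}} u} = \Pi_{\mu_{3}} \overline{u}$ from \eqref{a38}, that for real $u_{0}$ both $u_{\rm c}$ and $u_{\rm s}$ are real-valued, which makes the oscillatory nature of the dynamics transparent; this is not needed for the statement but worth a remark.)

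There is essentially no hard analytic obstacle here, since all the heavy lifting — the spectral gap, the resolvent bounds, and the semigroup expansion — is already packaged in Theorem \ref{a20} and in Corollary 1.5 of \cite{BoLeMi22_01}. The only point requiring a little care is making sure the hypotheses of that cited corollary are met by $P_{\rm com}$: one needs $P_{\rm com}$ to be of the form \eqref{a80} (established in the paragraph containing \eqref{a31}), maximal accretive with the stated domain, and with its spectrum below $\lambda_{*} h / 2$ reduced to the three eigenvalues $\mu_{1} , \mu_{2} , \mu_{3}$ which are simple — all of which follow from the construction in Theorem \ref{a20}. Once that is checked, the corollary is immediate. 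If anything, the ``obstacle'' is purely bookkeeping: ensuring the constant $C$ can be chosen uniformly in $t$ (it can, since $\re \mu_{j} \geq 0$ and $\lambda_{*} > 0$) and independent of $u_{0}$ (clear, by linearity).
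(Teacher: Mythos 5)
Your proposal follows essentially the same route as the paper, which obtains the corollary simply by combining Corollary 1.5 of \cite{BoLeMi22_01} with Theorem \ref{a20} and then regrouping the conjugate pair $\mu_{3} = \overline{\mu_{2}}$ via Euler's formula to pass from the first to the second line of \eqref{a39}. One small correction: since the semigroup is $e^{- t P_{\rm com} / h}$ and the rest of the spectrum has real part at least $\lambda_{*} h / 2$, the remainder decays like $e^{- t \lambda_{*} / 2}$ (the $h$'s cancel), so the constant $C$ is genuinely uniform in $h$ as the statement requires, and there is no need to ``absorb the $h$-dependence at fixed small $h$'' — doing so would in fact be inconsistent with the claimed independence of $C$ from $h$.
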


If the function $u_{0}$ is real-valued, \eqref{a38} implies that $\overline{u_{2}} = u_{3}$ and then $u_{\rm c}$ and $u_{\rm s}$ are also real-valued. If in addition $u_{2}$, $u_{3}$, $u_{\rm c}$ or $u_{\rm s}$ does not vanish identically, the discussion below \eqref{a38} shows that $( u_{\rm c} , u_{\rm s} )$ is a basis of $\im \Pi_{\mu_{2}} \oplus \im \Pi_{\mu_{3}}$. In that case,
\begin{equation*}
t \longmapsto \cos ( t \im \mu_{2} / h ) u_{\rm c} + \sin ( t \im \mu_{2} / h ) u_{\rm s} ,
\end{equation*}
is a non-vanishing periodic function of period $2 \pi h \vert \im \mu_{2} \vert^{- 1}$ which reaches all the directions of $\im \Pi_{\mu_{2}} \oplus \im \Pi_{\mu_{3}}$. Then, the subprincipal term in \eqref{a39}, which measures the return to equilibrium, is oscillating. Nevertheless, this phenomenon may be difficult to see in the applications since \eqref{a85} implies that this subprincipal term decays more quickly than it oscillates.

We now construct an operator having a spectral gap with a Jordan block. For that, we consider perturbations of $P_{0}$ of the form
\begin{equation*}
P_{\rm Jor} = d_{f}^{*} \circ \big( 1 + \chi ( x , h ) \big) Id \circ d_{f} + B \qquad \text{where} \qquad B = \frac{1}{2} \big( b \cdot h \nabla + h \div \circ b \big) ,
\end{equation*}
$Id$ denotes the $2 \times 2$ identity matrix, $\chi \in C^{\infty}_{0} ( \R^{2} ; \R ) \cap S ( h^{\infty} )$ and $b \in C^{\infty}_{0} ( \R^{2} ; \R^{2} ) \cap S ( h^{\infty} )$. For $h$ small enough, such an operator falls within the general framework of \eqref{a80}.

\begin{theorem}[Jordan block]\sl \label{a70}
Let $r ( h ) = \CO ( h^{\infty} )$ be a positive function. There exist functions $\chi ( x , h ) \in C^{\infty}_{0} ( \R^{2} ; \R ) \cap S ( r )$ and $b ( x , h ) \in C^{\infty}_{0}( \R^{2} ; \R^{2} ) \cap S ( r )$ with \eqref{a31} such that, for $h$ small enough,
\begin{equation*}
\sigma ( P_{\rm Jor} ) \cap \{ z \in \C ; \ \re z < \lambda_{*} h / 2 \} = \big\{ \lambda_{1} , \lambda_{2} \big\}  \text{ of multiplicity } 1 \text{ and } 2 \text{ respectively,}
\end{equation*}
and $P_{\rm Jor}$ has a non-trivial Jordan block associated with the eigenvalue $\lambda_{2}$.
\end{theorem}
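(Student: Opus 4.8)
The plan is to argue by finite–dimensional perturbation theory at a fixed small value of $h$, reducing the statement to the study of a $2 \times 2$ matrix, and then to choose $\chi$ and $b$ so that this matrix is a non‑trivial Jordan block. Since $d_{f} ( e^{- f / h} ) = 0$ we have $d_{f}^{*} \big( ( 1 + \chi ) d_{f} ( e^{- f / h} ) \big) = 0$, and together with \eqref{a31} this gives $P_{\rm Jor} ( e^{- f / h} ) = P_{\rm Jor}^{*} ( e^{- f / h} ) = 0$, so that $0$ remains an eigenvalue (simple, for $h$ small) whose spectral subspace decouples from the rest of the small spectrum. By Proposition \ref{a30} and standard perturbation theory, for $h$ small enough and $\Vert \chi \Vert_{\infty} + \Vert b \Vert_{\infty}$ small, the part of $\sigma ( P_{\rm Jor} )$ in $\{ \re z < \lambda_{*} h / 2 \}$ consists of $0$ together with the two eigenvalues of the matrix $\mathcal{E} ( \chi , b )$ obtained by restricting $P_{\rm Jor}$ to its two–dimensional spectral subspace close to $F := \ker ( P_{0} - \lambda_{2} )$ and conjugating back to $F$; since $P_{\rm Jor}$ is similar to $\mathcal{E} ( \chi , b )$ on that subspace, $P_{\rm Jor}$ has a non‑trivial Jordan block at $\lambda_{2}$ iff $\mathcal{E} ( \chi , b )$ is not diagonalizable, and $\mathcal{E} ( \chi , b )$ is a real matrix by Remark \ref{a83}. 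First–order perturbation theory gives, in a real orthonormal basis $( e_{1} , e_{2} )$ of $F$,
\begin{equation*}
\mathcal{E} ( \chi , b ) = \lambda_{2} I + S ( \chi ) + A ( b ) + \mathcal{R} ( \chi , b ) , \qquad S ( \chi ) = \big( \langle \chi \, d_{f} e_{i} , d_{f} e_{j} \rangle \big)_{i , j} , \quad A ( b ) = \big( \langle B e_{i} , e_{j} \rangle \big)_{i , j} ,
\end{equation*}
where $S ( \chi )$ is real symmetric, $A ( b )$ real antisymmetric, and the remainder $\mathcal{R} ( \chi , b )$ is quadratic in $( \chi , b )$ (with constants depending on the fixed $h$). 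Finally, a real $2 \times 2$ matrix $M = S + A$, with $S$ symmetric and $A = a \left( \begin{smallmatrix} 0 & 1 \\ - 1 & 0 \end{smallmatrix} \right)$, has a non‑trivial Jordan block iff it is not scalar and $\delta ( M ) := ( \tr M )^{2} - 4 \det M = 0$; moreover $\delta$ is invariant under $M \mapsto M + c I$ and satisfies $\delta ( S + A ) = \delta ( S ) - 4 a^{2}$, where $\delta ( S ) \geq 0$ is the square of the spectral gap of $S$.

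\emph{Step 1: splitting the degeneracy with a diffusion term.} We first take $\chi = r \chi_{0}$ with $\chi_{0} \geq 0$ a fixed bump supported in a small neighbourhood $V$ of a single saddle point, say $s_{1}$. By the WKB/quasimode description of the small eigenfunctions of the Witten Laplacian (see \cite{HeHiSj11_01,Mi19_01} and Section \ref{s2}), near $s_{1}$ each current $d_{f} e_{i}$ equals, to leading order in $h$, $\xi_{i}$ times a fixed non‑zero $\R^{2}$–valued profile $\psi_{V}$ attached to $s_{1}$, where $\xi = ( \xi_{1} , \xi_{2} ) \neq 0$ is the vector of coordinates, in the basis $( e_{1} , e_{2} )$, of the (non‑zero) linear form on $F$ measuring the difference of the well‑values of an eigenfunction across $s_{1}$. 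Hence $S ( \chi_{0} ) = \beta_{V} ( \xi_{i} \xi_{j} )_{i , j} + o ( \beta_{V} )$ with $\beta_{V} = \int_{V} \chi_{0} \vert \psi_{V} \vert^{2} > 0$: to leading order $S ( \chi_{0} )$ is a non‑zero rank‑one symmetric matrix, hence for $h$ small it is non‑scalar and $\delta ( S ( \chi_{0} ) ) > 0$. Equivalently, the self–adjoint operator $d_{f}^{*} ( 1 + \chi ) d_{f}$ splits $\lambda_{2}$ into two distinct real eigenvalues.

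\emph{Step 2: tuning the anti‑adjoint term and conclusion.} Keeping this $\chi$, we set $b = s b_{0}$, $s \geq 0$, where $b_{0} \in C^{\infty}_{0} ( \R^{2} ; \R^{2} )$ satisfies \eqref{a31} and is chosen, exactly as in the construction behind Theorem \ref{a20} and Lemma \ref{a22}, so that $A ( b_{0} ) \neq 0$, i.e. $a ( b_{0} ) \neq 0$ (and \eqref{a31} is preserved under $b_{0} \mapsto s b_{0}$). Using that $S ( \cdot )$, $A ( \cdot )$ are linear, $\mathcal{R}$ quadratic, and that $\delta$ is translation invariant and quadratic in the matrix entries, we get
\begin{equation*}
\delta \big( \mathcal{E} ( r \chi_{0} , s b_{0} ) \big) = r^{2} \delta \big( S ( \chi_{0} ) \big) - 4 s^{2} a ( b_{0} )^{2} + \CO \big( ( r + s )^{3} \big) .
\end{equation*}
This quantity is $> 0$ at $s = 0$ and, for $s_{1} := r \, \delta ( S ( \chi_{0} ) )^{1 / 2} / \vert a ( b_{0} ) \vert = \CO ( r )$, it equals $- 3 r^{2} \delta ( S ( \chi_{0} ) ) + \CO ( r^{3} ) < 0$ when $h$ (hence $r = r ( h ) = \CO ( h^{\infty} )$) is small. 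Since $\Vert s b_{0} \Vert_{\infty} = \CO ( r )$ on $[ 0 , s_{1} ]$, perturbation theory applies throughout, and the intermediate value theorem yields $s_{*} \in ( 0 , s_{1} )$ with $\delta ( \mathcal{E} ( r \chi_{0} , s_{*} b_{0} ) ) = 0$. Set $\chi := r \chi_{0}$ and $b := s_{*} b_{0}$; these lie in $C^{\infty}_{0} ( \R^{2} ) \cap S ( r )$ and satisfy \eqref{a31}. Then $\mathcal{E} := \mathcal{E} ( \chi , b )$ is a real $2 \times 2$ matrix with $\delta ( \mathcal{E} ) = 0$, hence has a double eigenvalue $\lambda_{2}^{*} := \tfrac{1}{2} \tr \mathcal{E} \in \R$, and its traceless symmetric part equals $r \times ( \text{traceless part of } S ( \chi_{0} ) ) + \CO ( r^{2} )$, which is non‑zero for $h$ small because $\delta ( S ( \chi_{0} ) ) > 0$; therefore $\mathcal{E} \neq \lambda_{2}^{*} I$. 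Consequently $\mathcal{E}$, and with it $P_{\rm Jor}$, has a non‑trivial $2 \times 2$ Jordan block at $\lambda_{2}^{*} = \lambda_{2} + \CO ( r )$, while $0$ remains simple; by Proposition \ref{a30} these account for all of $\sigma ( P_{\rm Jor} ) \cap \{ \re z < \lambda_{*} h / 2 \}$, of algebraic multiplicities $1$ and $2$. This is the claim.

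\emph{Main obstacle.} The delicate step is Step 1: one must show that a purely diffusive (self‑adjoint) perturbation $d_{f}^{*} \chi d_{f}$ can lift the degeneracy of $\lambda_{2}$, i.e. that $S ( \chi )$ can be made non‑scalar. This rests on the fact that, near a saddle point, the current $d_{f} e$ of a small eigenfunction $e$ is, to leading order, a fixed profile times a scalar depending linearly (and, for a generic basis of $F$, non‑trivially) on $e$, so that localizing $\chi$ at one saddle produces a rank‑one — hence non‑scalar — leading contribution; making this rigorous requires the quasimode analysis of the Witten Laplacian from Section \ref{s2}. A secondary point is the competition between the $\CO ( r )$–size splitting produced in Step 1 and the $\CO ( r^{2} )$ perturbative remainder, which is why the proof must be organized as a one‑parameter continuity argument in $s$ rather than a naive first–order computation.
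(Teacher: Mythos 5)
Your overall strategy is genuinely close to the paper's: the paper also reduces to a real $2\times 2$ interaction matrix on the spectral subspace of $\lambda_{2}$, also uses diffusion-type perturbations $d_{f}^{*}\chi_{\nu}d_{f}$ localized at saddle points together with the anti-adjoint $\SB$ of Lemma \ref{a22}, and also works by perturbation theory at fixed $h$. The difference is that the paper takes the full four-parameter family $P_{\tau}=P_{0}+\tau_{1}P_{1}+\tau_{2}P_{2}+\tau_{3}P_{3}+\tau_{4}\SB$ (bumps at all three saddles), computes the differential of the interaction matrix $Q(\tau)$ at $\tau=0$ explicitly via the refined quasimodes of \cite{BoLeMi22_01} (Lemmas \ref{a42}, \ref{a55}, \ref{a58}), shows $(\partial_{\tau_{\nu}}Q(0))_{\nu}$ spans $M_{2\times 2}(\R)$, and invokes the inverse function theorem to hit exactly the matrix with entries $\lambda_{2}$ on the diagonal and $\rho\neq 0$ above it. Your two-parameter intermediate-value argument on the discriminant only lets you zero the discriminant, not prescribe the trace: you obtain a Jordan block at $\lambda_{2}+\CO(r)$, whereas the statement (and the paper's remark ``the second eigenvalue of $P_{0}$ and $P_{\rm Jor}$ is the same'') asserts the double eigenvalue is exactly $\lambda_{2}$, the second eigenvalue of $P_{0}$ from Proposition \ref{a30}. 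This is precisely what the richer family plus the inverse function theorem buys (and it also gives $\vert\tau\vert<r$ for free by choosing $\rho$ small).

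The more serious gap is quantitative. Since $\Vert d_{f}e_{i}\Vert^{2}=\lambda_{2}$, the matrix $S(\chi_{0})$ has entries $\CO(\Vert\chi_{0}\Vert_{\infty}\lambda_{2})$, so your main term satisfies $\delta(S(\chi_{0}))\lesssim h^{2}e^{-4S/h}$, i.e. it is exponentially small in $h$; meanwhile you only claim the remainder $\mathcal{R}$ is quadratic ``with constants depending on the fixed $h$''. Your sign conclusions at $s=0$ and $s=s_{1}$ require $C_{h}\,r(h)\ll\delta(S(\chi_{0}))$, and this does not follow from $r(h)=\CO(h^{\infty})$ even if $C_{h}=\CO(1)$: take $r(h)=e^{-1/\sqrt{h}}$, which is $\CO(h^{\infty})$ but far larger than $h^{2}e^{-4S/h}$. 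So ``when $h$ is small'' does not close the argument as written. To repair it you must either (a) prove that the remainder inherits the $d_{f}^{*}(\cdot)d_{f}$ structure and is $\CO(\vert\tau\vert^{2}\lambda_{2})$ — which is essentially what the paper's quasimode machinery delivers — or (b) insert an additional $h$-dependent shrinking factor in front of both perturbations (permitted, since membership in $S(r)$ is only an upper bound) and run the fixed-$h$ Kato argument, which is exactly how the paper proceeds (inverse function theorem with $h$-dependent neighborhoods). Finally, your Step 1 — that $S(\chi_{0})$ is, at leading order, a nonzero rank-one matrix with gap comparable to $\lambda_{2}$ — is asserted heuristically but is the technical heart of the matter; it is the content of Lemmas \ref{a42} and \ref{a55}, which rely on the refined quasimodes $\phi_{j}$ of \cite{BoLeMi22_01}, not merely on the rough quasimodes of Section \ref{s2}, and for your bookkeeping you need it in quantitative form (a lower bound on $\delta(S(\chi_{0}))$ of order $\lambda_{2}^{2}$), not just ``non-scalar''.
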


Let $\Pi_{\lambda_{1}}$ and $\Pi_{\lambda_{2}}$ be the spectral projectors of $P_{\rm Jor}$ associated to $\lambda_{1}$ and $\lambda_{2}$ respectively. From Theorem \ref{a70} and \eqref{a82}, there exists an orthonormal basis of real-valued functions, denoted $( e_{1} , e_{2} )$, of $\im \Pi_{\lambda_{2}}$ such that $\Pi_{\lambda_{2}} P_{\rm Jor} \Pi_{\lambda_{2}}$ expressed in the basis $( e_{1} , e_{2} )$ writes
\begin{equation}
\begin{pmatrix}
\lambda_{2} & \rho \\
0 & \lambda_{2}
\end{pmatrix} ,
\end{equation}
for some $\rho ( h ) \in \R \setminus \{ 0 \}$ (see \eqref{a71}). Note that $e_{1}$ and $e_{2}$ are unique modulo multiplication by $\pm 1$. By construction, the constant $\rho$ is very small. More precisely,
\begin{equation} \label{a75}
\vert \rho ( h ) \vert = \CO ( h^{\infty} \lambda_{2} ) = \CO ( h^{\infty} e^{- 2 S / h} ) .
\end{equation}
But, as for the imaginary part of the eigenvalues \eqref{a85}, this is a general fact: any Jordan block associated with a small eigenvalue of an operator of the form \eqref{a81} satisfies an estimate similar to \eqref{a75}. Indeed, all the terms in the asymptotic expansion of the interaction matrices are self-adjoint (see Section 6 of \cite{BoLeMi22_01}).

It is difficult to construct by perturbation theory an operator of the form \eqref{a77} satisfying Theorem \ref{a70}. Indeed, Lemma \ref{a22} shows that such operators enter into the setting of Theorem \ref{a20} as soon as the leading term in the perturbation theory does not vanish. This is why we consider here more general perturbations which allow to ``generate all the possible'' leading terms (see Section 4 of \cite{Sj87_01} for similar ideas in resonances theory).

Contrary to Theorem \ref{a20}, the spectral situation of Theorem \ref{a70} is unstable. Generically, a small perturbation (in the setting of \eqref{a80}) splits the double eigenvalue $\lambda_{2}$ into two non-real conjugate eigenvalues. This is  general fact concerning the Jordan blocks. Moreover, the second eigenvalue of $P_{0}$ and $P_{\rm Jor}$ is the same. The proof of Theorem \ref{a70} allows to change slightly the second eigenvalue of $P_{\rm Jor}$, but the actual statement simplifies the result.

Combining with Corollary 1.5 of \cite{BoLeMi22_01}, the time evolution equation associated to $P_{\rm Jor}$ satisfies

\begin{corollary}\sl \label{a73}
Consider $P_{\rm Jor}$ as in Theorem \ref{a70} with $h$ small enough. For all $u_{0} \in L^{2} ( \R^{2} )$, the solution $u = e^{- t P_{\rm Jor} / h} u_{0}$ of \eqref{a37} can be written
\begin{equation}
e^{- t P_{\rm Jor} / h} u_{0} = u_{1} + t e^{- t \lambda_{2} / h} u_{2} + e^{- t \lambda_{2} / h} u_{3} + \varepsilon ( t ) ,
\end{equation}
with $u_{1} = \Pi_{\lambda_{1}} u_{0}$, $u_{2} = - \rho \< e_{2} , \Pi_{\lambda_{2}} u_{0} \> e_{1}$, $u_{3} = \Pi_{\lambda_{2}} u_{0}$ and
\begin{equation*}
\Vert \varepsilon ( t ) \Vert_{L^{2} ( \R^{2} )} \leq C e^{- t / C} \Vert u_{0} \Vert_{L^{2} ( \R^{2} )} ,
\end{equation*}
for some constant $C > 0$ independent of $t , h , u_{0}$.
\end{corollary}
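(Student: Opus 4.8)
The plan is to reduce the statement to the finite-dimensional action of $P_{\rm Jor}$ on its low-lying spectral subspace, the contribution of the rest of the spectrum being absorbed into $\varepsilon(t)$ via the quantitative functional calculus of \cite{BoLeMi22_01}.

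For $h$ small enough, $P_{\rm Jor}$ belongs to the class \eqref{a80} (as noted before the statement of Theorem \ref{a70}); in particular it is maximal accretive, $-P_{\rm Jor}/h$ generates a contraction semigroup (so the solution of \eqref{a37} is $u = e^{-tP_{\rm Jor}/h}u_0$), and Corollary~1.5 of \cite{BoLeMi22_01} applies. Denote by $\Pi = \Pi_{\lambda_1} + \Pi_{\lambda_2}$ the spectral projection of $P_{\rm Jor}$ onto $\sigma(P_{\rm Jor}) \cap \{\re z < \lambda_* h/2\} = \{\lambda_1,\lambda_2\}$ furnished by Theorem \ref{a70}. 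That corollary gives
\[
e^{-tP_{\rm Jor}/h} = e^{-tP_{\rm Jor}/h}\,\Pi + R(t), \qquad \Vert R(t)\Vert_{L^{2}(\R^{2})\to L^{2}(\R^{2})} \leq C\,e^{-t/C},
\]
with $C>0$ independent of $t$ and $h$. Applying this to $u_0$ and setting $\varepsilon(t) = R(t)u_0$ produces the remainder term of the statement with the announced bound, so it only remains to compute $e^{-tP_{\rm Jor}/h}\Pi u_0 = e^{-tP_{\rm Jor}/h}\Pi_{\lambda_1}u_0 + e^{-tP_{\rm Jor}/h}\Pi_{\lambda_2}u_0$.

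The first summand is immediate: as for $P_0$ one has $\im \Pi_{\lambda_1} = \ker P_{\rm Jor} = e^{-f/h}\C$, so $P_{\rm Jor}$ vanishes there and $e^{-tP_{\rm Jor}/h}\Pi_{\lambda_1}u_0 = \Pi_{\lambda_1}u_0 = u_1$, independent of $t$. For the second summand I would work in the real orthonormal basis $(e_1,e_2)$ of the two-dimensional $P_{\rm Jor}$-invariant subspace $\im \Pi_{\lambda_2}$ introduced after Theorem \ref{a70}, in which $P_{\rm Jor}$ is the Jordan matrix \eqref{a71}, that is $\lambda_2\, Id + N$ with $N$ nilpotent and proportional to $\rho \in \R\setminus\{0\}$. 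Since $N^2 = 0$, the restriction of $e^{-tP_{\rm Jor}/h}$ to $\im \Pi_{\lambda_2}$ is $e^{-t\lambda_2/h}$ times a unipotent matrix whose single off-diagonal entry is linear in $t$; expanding $\Pi_{\lambda_2}u_0 = \langle e_1,\Pi_{\lambda_2}u_0\rangle e_1 + \langle e_2,\Pi_{\lambda_2}u_0\rangle e_2$ and applying this $2\times 2$ matrix, one reads off the two terms $e^{-t\lambda_2/h}u_3$ and $t\,e^{-t\lambda_2/h}u_2$ of the statement, with $u_3 = \Pi_{\lambda_2}u_0$ and $u_2$ the asserted multiple of $e_1$. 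Summing the three contributions gives the claimed decomposition.

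The argument is essentially bookkeeping and I do not anticipate a real obstacle. The one point that is not formal --- and the reason one invokes \cite{BoLeMi22_01} rather than arguing directly from the spectral gap at $\lambda_* h/2$ --- is the \emph{uniformity in $h$} of the bound $\Vert R(t)\Vert \leq C e^{-t/C}$: since $P_{\rm Jor}$ is not self-adjoint, this rests on the quantitative resolvent estimates underlying Corollary~1.5 of \cite{BoLeMi22_01} rather than on a naive semigroup estimate.
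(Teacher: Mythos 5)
Your proposal is correct and follows exactly the route the paper intends (the paper gives no separate proof: it simply combines Corollary 1.5 of \cite{BoLeMi22_01}, which yields the remainder $\varepsilon(t)$ with the uniform bound, with Theorem \ref{a70} and the explicit exponential of the $2\times 2$ Jordan block \eqref{a71}, which is precisely your finite-dimensional computation). The only point worth noting is bookkeeping: since the semigroup is $e^{-tP_{\rm Jor}/h}$, exponentiating the nilpotent part actually produces the coefficient $-\rho h^{-1}\< e_{2} , \Pi_{\lambda_{2}} u_{0} \> $ in front of $e_1$, i.e.\ the stated $u_2$ up to the factor $h^{-1}$, a discrepancy that lies in the statement rather than in your argument.
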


In particular, we have the sharp return to equilibrium result
\begin{equation} \label{a72}
\big\Vert e^{- t P_{\rm Jor} / h} - \Pi_{\lambda_{1}} \big\Vert \sim \alpha t e^{- t \lambda_{2} / h} ,
\end{equation}
in the limit $t \to + \infty$ for $h$ small enough and some positive constant $\alpha ( h ) > 0$. This estimate shows that the return to equilibrium is not purely exponentially decreasing in general and that some powers of $t$ may appear.

Until now, we have only considered the spectral gap given by exponentially small eigenvalues, corresponding to several minima. But, if we study higher eigenvalues, it is more simple to have non-real spectrum. For $\varepsilon \in \R$, consider the operator
\begin{equation*}
\CP = - h^{2} \Delta + x^{2} - 2 h + \varepsilon \big( x_{1} h \partial_{x_{2}} - x_{2} h \partial_{x_{1}} \big) .
\end{equation*}
It enters in the setting of \eqref{a80} with the Morse function $f ( x ) = x^{2} / 2$ which has a unique minimum at $x = 0$. The bottom of its spectrum is given by

\begin{proposition}\sl \label{a78}
For $h > 0$ and $\varepsilon \neq 0$, we have
\begin{equation*}
\sigma ( \CP ) \cap \{z \in \C ; \ \re z < 4 h \} = \big\{ 0 , 2 h + i \varepsilon h , 2 h - i \varepsilon h \big\} ,
\end{equation*}
and these eigenvalues are simple.
\end{proposition}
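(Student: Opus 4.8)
The plan is to diagonalise $\CP$ by exploiting the rotational invariance of its principal part. I would write $\CP = P_{\rm osc} + \varepsilon h L$, where $P_{\rm osc} = - h^{2} \Delta + x^{2} - 2 h$ is a shift of the two dimensional harmonic oscillator and $L = x_{1} \partial_{x_{2}} - x_{2} \partial_{x_{1}}$ is the generator of the rotations around the origin. Since $L$ is a differential operator of order one, it is $P_{\rm osc}$-bounded with relative bound $0$, so $\CP$ is closed on $\SD ( P_{\rm osc} ) = H^{2} ( \R^{2} ) \cap \< x \>^{- 2} L^{2} ( \R^{2} )$ (this also follows from the fact, already noted, that $\CP$ falls within the framework of \eqref{a80}). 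Moreover $L$ is skew-adjoint on $L^{2} ( \R^{2} )$ (the associated vector field is divergence free and rotations preserve the Lebesgue measure) and $[ P_{\rm osc} , L ] = 0$ because $P_{\rm osc}$ is invariant under rotations.

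First I would use the spectral decomposition of the harmonic oscillator, $L^{2} ( \R^{2} ) = \bigoplus_{k \geq 0} E_{k}$ with $E_{k} = \ker ( P_{\rm osc} - 2 h k )$ and $\dim E_{k} = k + 1$. Since $L$ commutes with $P_{\rm osc}$, each finite dimensional space $E_{k}$ is invariant under $L$, and $L \vert_{E_{k}}$ is skew-adjoint, hence diagonalisable with purely imaginary eigenvalues and an orthonormal eigenbasis. Gluing these bases yields an orthonormal basis of $L^{2} ( \R^{2} )$ made of joint eigenvectors of $P_{\rm osc}$ and $L$, hence of $\CP$, in which $\CP$ is block diagonal with blocks $\CP \vert_{E_{k}} = 2 h k \, Id + \varepsilon h L \vert_{E_{k}}$. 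Each such block is a scalar plus a skew-adjoint operator, so $\CP \vert_{E_{k}} - z$ is normal and $\Vert ( \CP \vert_{E_{k}} - z )^{- 1} \Vert \leq \vert 2 h k - \re z \vert^{- 1}$ when $2 h k \neq \re z$. This bound is uniform in $k \geq 2$ once $\re z < 4 h$, which shows that $\sigma ( \CP ) = \bigcup_{k \geq 0} \sigma ( \CP \vert_{E_{k}} )$ and that $( \CP - z )^{- 1}$ exists and is bounded for every $z$ with $\re z < 4 h$ outside $\sigma ( \CP \vert_{E_{0}} ) \cup \sigma ( \CP \vert_{E_{1}} )$. In particular every eigenvalue of $\CP$ has real part in $2 h \N$, so only $k = 0$ and $k = 1$ contribute to the region $\{ \re z < 4 h \}$.

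It then remains to compute the two lowest blocks. For $k = 0$, $E_{0} = \C \, e^{- x^{2} / ( 2 h )}$ and $L$ annihilates this rotation invariant function, so $\CP \vert_{E_{0}} = 0$. For $k = 1$, $E_{1} = \Vect \big( x_{1} e^{- x^{2} / ( 2 h )} , x_{2} e^{- x^{2} / ( 2 h )} \big)$, and the elementary computation $L \big( x_{1} e^{- x^{2} / ( 2 h )} \big) = - x_{2} e^{- x^{2} / ( 2 h )}$ and $L \big( x_{2} e^{- x^{2} / ( 2 h )} \big) = x_{1} e^{- x^{2} / ( 2 h )}$ shows that $L \vert_{E_{1}}$ has eigenvalues $\pm i$, hence $\CP \vert_{E_{1}}$ has eigenvalues $2 h \pm i \varepsilon h$. (Equivalently, in polar coordinates $L = \partial_{\theta}$ and the Hermite eigenfunctions of $P_{\rm osc}$ can be chosen of the form (radial) $\times e^{i m \theta}$; on $E_{0}$ one has $m = 0$ and on $E_{1}$ one has $m = \pm 1$.) Combining the blocks gives $\sigma ( \CP ) \cap \{ \re z < 4 h \} = \{ 0 , 2 h + i \varepsilon h , 2 h - i \varepsilon h \}$; for $\varepsilon \neq 0$ these three numbers are pairwise distinct, and since each $\CP \vert_{E_{k}}$ is diagonalisable they are simple eigenvalues of $\CP$ with no Jordan block.

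The argument is essentially separation of variables, so there is no serious obstacle. The only point requiring a little care is that $\CP$ is not self-adjoint, so one must justify that it is closed on $\SD ( P_{\rm osc} )$ and, above all, that its spectrum is exactly the discrete union of the spectra of the finite blocks, with no spectrum escaping ``to infinity''. This is precisely what the uniform block-wise resolvent estimate above provides, and that estimate is available because each $L \vert_{E_{k}}$ is skew-adjoint.
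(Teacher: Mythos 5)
Your proof is correct and follows essentially the same route as the paper: decompose $L^{2} ( \R^{2} )$ into the harmonic oscillator eigenspaces $E_{n}$, observe that the skew-adjoint angular momentum term leaves each $E_{n}$ invariant (the paper checks this via the commutators $[ \SB , a_{j}^{*} ]$, you via $[ P_{\rm osc} , L ] = 0$, which is the same fact), and compute the block on $E_{1}$ explicitly to get $2 h \pm i \varepsilon h$. Your added justification that the spectrum is exactly the union of the block spectra, via the normality of each block and the uniform resolvent bound for $k \geq 2$, is a welcome precision of the paper's asserted identity \eqref{a79}, but it does not change the argument.
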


Then, this operator has a non-real spectral gap. Nevertheless, it is not given by exponentially small eigenvalues responsible of metastable dynamics. In this simple well situation, the Eyring--Kramers law only provides the asymptotic of $0$, the first eigenvalue of $\CP$. Note also that, for $\varepsilon \neq 0$ fixed, these eigenvalues do no longer satisfy \eqref{a85}.

The rest of the paper is organized as follows. In the next section, we collect some properties of the reference operator $P_{0}$ used in the sequel. Section \ref{s3} is devoted to the construction of the anti-adjoint perturbation $B$ based on properties of nodal sets. This construction allows to prove Theorem \ref{a20} (resp. Theorem \ref{a70}) in Section \ref{s4} (resp. Section \ref{s5}) combining the perturbation theory and previous results of \cite{BoLeMi22_01}. Lastly, Proposition \ref{a78} is obtained in Section \ref{s6} by direct computations.

\section{Spectral properties of $P_{0}$} \label{s2}

This part is devoted to the proof of Proposition \ref{a30} and to other technical results on $P_{0}$. From Theorem \ref{a81}, there exists $\lambda_{*} > 0$ such that, for $h$ small enough, $P_{0}$ has exactly three eigenvalues counted with multiplicity $0 = \lambda_{1} ( h ) < \lambda_{2} ( h ) \leq \lambda_{3} ( h )$ in $] - \infty , \lambda_{*} h ]$. Moreover, $\lambda_{2}$ and $\lambda_{3}$ are exponentially small. Eventually, the asymptotic
\begin{equation*}
\lambda_{2} , \lambda_{3} \sim \frac{3 \vert \mu ( s ) \vert \vert \det \hess f ( m ) \vert^{1 / 2}}{\pi \vert \det \hess f ( s ) \vert^{1 / 2}} h e^{- 2 S / h} ,
\end{equation*}
is a direct consequence of Section 7C3 of \cite{Mi19_01} (see also \cite{BoLeMi22_01,HeHiSj11_01}). We denote
\begin{equation*}
\Pi = \one_{[\lambda_{1} , \lambda_{3}]} ( P_{0} ), \qquad \Pi_{1} = \one_{\{ \lambda_{1} \}} ( P_{0} ) \qquad \text{and} \qquad \Pi_{2 3} = \one_{[\lambda_{2} , \lambda_{3}]} ( P_{0} ) ,
\end{equation*}
the spectral projectors of $P_{0}$. They satisfy $\Pi = \Pi_{1} + \Pi_{2 3}$,
\begin{equation} \label{a32}
\overline{\Pi_{\bullet} u} = \Pi_{\bullet} \overline{u} \qquad \text{and} \qquad R \Pi_{\bullet} = \Pi_{\bullet} R ,
\end{equation}
since $P_{0}$ commutes with $R$ and the complex conjugation. Here, $R$ is viewed as the rotation acting on functions (i.e. $R ( f ) = f \circ R$ for $f \in L^{2} ( \R^{d} )$).

Let $\chi \in C^{\infty}_{0} ( \R^{2}; [ 0 , 1 ] )$ be supported near $m_{1}$ with $\chi = 1$ near $m_{1}$. We set
\begin{equation} \label{a35}
\psi_{1} = \frac{\chi ( x ) e^{- f ( x ) / h}}{\Vert \chi e^{- f / h} \Vert} , \qquad \psi_{2} = \psi_{1} \circ R \qquad \text{and} \qquad \psi_{3} = \psi_{1} \circ R^{2} ,
\end{equation}
with the estimates
\begin{equation} \label{a44}
\big\Vert \chi e^{- f / h} \big\Vert \sim \sqrt{\pi h} ( \det \hess f ( m ) )^{- 1 / 4} \quad \text{and} \quad \big\Vert e^{- f / h} \big\Vert \sim \sqrt{3 \pi h} ( \det \hess f ( m ) )^{- 1 / 4} .
\end{equation}
Since $f$ is invariant by rotation, these quantities do not depend on the minimum $m$ where they are computed. The function $\psi_{j}$ is localized near $m_{j}$ from Figure \ref{f1}, and the family $( \psi_{j} )_{j}$ is orthonormal. We then set
\begin{equation*}
\varphi_{j} = \Pi \psi_{j} .
\end{equation*}
We have $\psi_{j} \in C^{\infty}_{0} ( \R^{2} )$ and $\varphi_{j} \in \CS ( \R^{2} )$. A classical result (see the proof of Proposition 2.5 of \cite{HeSj84_01}) yields
\begin{equation} \label{a11}
\varphi_{j} = \psi_{j} + \CO ( e^{- \delta / h} ) ,
\end{equation}
showing that the family $( \varphi_{j} )_{j}$ is an almost orthonormal basis of $\im \Pi$. Furthermore, \eqref{a32} implies that
\begin{equation} \label{a36}
\text{the function } \varphi_{j} \text{ is real for all } j \text{ and }\varphi_{j} = \varphi_{k} R^{j - k} \text{ for all } j , k .
\end{equation}
Eventually \eqref{a35}, \eqref{a44} and \eqref{a11} give
\begin{equation} \label{a8}
\frac{\sqrt{3}}{\Vert e^{- f / h} \Vert} e^{- f / h} = \varphi_{1} + \varphi_{2} + \varphi_{3} + \CO ( e^{- \delta / h} ) .
\end{equation}
We can now show that $\lambda_{2} = \lambda_{3}$.

\begin{lemma}\sl \label{a1}
For $h$ small enough, the second eigenvalue $\lambda_{2}$ of $P_{0}$ has multiplicity $2$.
\end{lemma}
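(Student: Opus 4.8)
The plan is to leverage the two symmetries that $P_0$ enjoys: the rotation $R$ of order three and complex conjugation, recorded in \eqref{a32}. Since $P_0$ commutes with $R$ and with conjugation, the three-dimensional space $\im \Pi$, the kernel $\im \Pi_1 = \ker P_0 = \C e^{- f / h}$, and hence also $\im \Pi_{2 3}$ (the orthogonal complement of $\im \Pi_1$ inside $\im \Pi$, since $\Pi = \Pi_1 + \Pi_{23}$ are spectral projections of a self-adjoint operator), are all stable under $R$ and under conjugation. Everything then reduces to linear algebra on the three-dimensional space $\im \Pi$.

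First I would describe the action of $R$ on $\im \Pi$. From $R \psi_j = \psi_{j + 1}$ and $R \Pi = \Pi R$ one gets $R \varphi_j = \varphi_{j + 1}$ (indices mod $3$), so $R$ permutes cyclically the basis $( \varphi_1 , \varphi_2 , \varphi_3 )$ of $\im \Pi$; in particular $R|_{\im \Pi}$ has characteristic polynomial $t^3 - 1$, hence is diagonalizable with the three \emph{distinct} eigenvalues $1 , \omega , \bar \omega$, where $\omega = e^{2 i \pi / 3}$. Moreover, $R$ being unitary on $L^2 ( \R^2 )$, the corresponding eigenlines $E_1 , E_\omega , E_{\bar \omega}$ are mutually orthogonal, and $\im \Pi = E_1 \oplus E_\omega \oplus E_{\bar \omega}$. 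The $R$-invariant function $e^{- f / h}$ (recall $f \circ R = f$) spans $E_1$; by equality of dimensions $E_1 = \C e^{- f / h} = \im \Pi_1$, and therefore $\im \Pi_{2 3} = E_\omega \oplus E_{\bar \omega}$.

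Then I would bring in $P_0$. Since $P_0$ commutes with $R$, it preserves each eigenline $E_\omega$ and $E_{\bar \omega}$, hence acts on each as multiplication by a scalar, necessarily real because $P_0$ is self-adjoint; call these $\lambda_2$ on $E_\omega$ and $\lambda_2'$ on $E_{\bar \omega}$. Finally, complex conjugation commutes with both $R$ and $P_0$; since it sends an eigenvector of $R$ for the eigenvalue $\omega$ to one for $\bar \omega$ and multiplies the corresponding $P_0$-eigenvalue by its conjugate, we get $\lambda_2' = \bar \lambda_2 = \lambda_2$. Thus $P_0$ acts on the two-dimensional space $\im \Pi_{2 3}$ as $\lambda_2$ times the identity, i.e. $\lambda_2 = \lambda_3$ has multiplicity $2$.

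The substantive points — and the only places where one must be careful — are the two structural facts used above: that $R|_{\im \Pi}$ has the three distinct eigenvalues $1 , \omega , \bar \omega$, which follows from its being the cyclic permutation of the linearly independent functions $\varphi_j$ (linear independence holding for $h$ small by \eqref{a11}), and that its eigenline for the eigenvalue $1$ is exactly $\im \Pi_1$ and not larger, which is immediate since $\dim \im \Pi_1 = 1$. No hard analysis is needed beyond the spectral facts about $P_0$ already recorded.
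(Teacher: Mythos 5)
Your proof is correct, and it takes a genuinely different route from the paper. The paper argues by contradiction: assuming $\lambda_{2}$ simple, it takes a real eigenvector $u$, expands it in the almost orthonormal basis $( \varphi_{j} )_{j}$, uses $u \circ R = \alpha u$ with $\alpha^{3} = 1$ and $\alpha \in \R$ to force $u_{1} = u_{2} = u_{3}$, and then contradicts the orthogonality of $u$ to $e^{- f / h}$ via \eqref{a8}. You instead diagonalize the $\Z / 3 \Z$ action directly: since $R$ cyclically permutes the $\varphi_{j}$'s (exactly, by \eqref{a36} and $R \Pi = \Pi R$), $R \vert_{\im \Pi}$ has the simple eigenvalues $1 , \omega , \overline{\omega}$; the trivial character line is $\C e^{- f / h} = \im \Pi_{1}$, so $\im \Pi_{2 3} = E_{\omega} \oplus E_{\overline{\omega}}$; commutation of $P_{0}$ with $R$ makes each of these lines a $P_{0}$-eigenline, and complex conjugation (which swaps $E_{\omega}$ and $E_{\overline{\omega}}$ and fixes the real eigenvalues of the self-adjoint $P_{0}$) forces the two eigenvalues to coincide, so $P_{0}$ is scalar on $\im \Pi_{2 3}$. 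Both arguments rest on the same inputs (\eqref{a11}, \eqref{a32}, \eqref{a36}, $R$-invariance of $e^{- f / h}$), but yours is direct rather than by contradiction, dispenses with the quantitative step \eqref{a8}--\eqref{a44}, and yields the extra structural information that $\im \Pi_{2 3}$ is exactly the sum of the two nontrivial isotypic components of the rotation action. The only point worth making explicit when concluding ``multiplicity $2$'' is that $\lambda_{2} \neq \lambda_{1} = 0$ (immediate from $\ker P_{0} = e^{- f / h} \C$, already recorded in the paper) and that no eigenvalue outside the window $] - \infty , \lambda_{*} h ]$ can equal $\lambda_{2}$; this is a one-line remark and does not affect the validity of your argument.
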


\begin{proof}
We prove this result by contradiction. Assume that $\lambda_{2}$ has multiplicity one and let $u$ be a normalized eigenvector. From \eqref{a82}, we can always choose $u$ real-valued. In the basis $( \varphi_{j} )_{j}$ of $\im \Pi$, this function can be written
\begin{equation} \label{a6}
u = u_{1} \varphi_{1} + u_{2} \varphi_{2} + u_{3} \varphi_{3} ,
\end{equation}
for some $u_{j} \in \R$. Since $( \varphi_{j} )_{j}$ is almost orthonormal,
\begin{equation} \label{a5}
1 = \Vert u \Vert^{2} = u_{1}^{2} + u_{2}^{2} + u_{3}^{2} + \CO ( e^{- \delta / h} ) .
\end{equation}
Applying the rotation $R$, \eqref{a36} and \eqref{a6} gives
\begin{equation*}
u \circ R = u_{1} \varphi_{2} + u_{2} \varphi_{3} + u_{3} \varphi_{1} .
\end{equation*}
On the other hand, we have $P ( u \circ R ) = ( P u ) \circ R = \lambda_{2} u \circ R$. Since $\lambda_{2}$ is simple, there exists $\alpha \in \C$ such that $u \circ R = \alpha u$, that is
\begin{equation} \label{a7}
u_{1} = \alpha u_{2} , \qquad u_{2} = \alpha u_{3} \qquad \text{and} \qquad u_{3} = \alpha u_{1} .
\end{equation}
Since $u$ and $u \circ R$ are real valued, we necessarily have $\alpha \in \R$. The relation \eqref{a7} implies $u_{j} = \alpha^{3} u_{j}$ for $j = 1 , 2 , 3$. Since at least one of the $u_{j}$ is non-zero from \eqref{a5}, we get $\alpha^{3} = 1$ and then $\alpha = 1$. Thus, $u_{1} = u_{2} = u_{3}$ and $\vert u_{1} \vert = 3^{- 1 / 2} + \CO ( e^{- \delta / h } )$. On the other hand, $u$ and $e^{- f / h}$ are orthogonal since they belong to two different eigenspaces of the self-adjoint operator $P_{0}$. Combining the previous properties with \eqref{a8}, it comes
\begin{align*}
0 &= \big\vert \big\< \sqrt{3} \Vert e^{- f / h} \Vert^{- 1} e^{- f / h} , u \big\> \big\vert \\
&= \vert u_{1} \vert \big\vert \big\< \varphi_{1} + \varphi_{2} + \varphi_{3} , \varphi_{1} + \varphi_{2} + \varphi_{3} \big\> \big\vert + \CO ( e^{- \delta / h } ) \\
&= \sqrt{3} + \CO ( e^{- \delta / h} ) ,
\end{align*}
which provides a contradiction for $h$ small enough. We have just proved that $\lambda_{2}$ has multiplicity at least two. Since this multiplicity can not be larger than two, we get the lemma.
\end{proof}

\section{Construction of the anti-adjoint perturbation $B$} \label{s3}

The anti-adjoint part of $P$ is chosen of the form $B = \varepsilon \SB$ with
\begin{equation} \label{a3}
\SB = d_{f}^{*} \circ G \circ d_{f} \qquad \text{with} \qquad G = \begin{pmatrix}
0 & g \\
- g & 0
\end{pmatrix} ,
\end{equation}
with $d_{f}$ defined in \eqref{k1}, for some constant $\varepsilon ( h ) \in ] 0 , + \infty [$ and some function $g ( x , h ) \in C^{\infty}_{0} ( \R^{2} ; \R )$ fixed in the sequel.

\begin{lemma}\sl \label{a2}
The operator $\SB$ is formally anti-adjoint, $\SB e^{- f / h} = 0$ and
\begin{equation*}
\SB = \frac{1}{2} \big( b \cdot h \nabla + h \div \circ b \big) \qquad \text{with} \qquad
b ( x , h ) = \begin{pmatrix}
h \partial_{x_{2}} g - 2 g \partial_{x_{2}} f \\
- h \partial_{x_{1}} g + 2 g \partial_{x_{1}} f
\end{pmatrix} \in C^{\infty}_{0} ( \R^{2} ; \R^{2} ).
\end{equation*}
\end{lemma}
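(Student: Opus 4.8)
The plan is to check the three assertions in turn; only the last requires computation. \emph{Anti-adjointness} is immediate: $G$ has real entries and $G^{T} = - G$, so the formal adjoint of the (zeroth-order, matrix-valued) operator $G$ is $- G$, and since $( d_{f}^{*} )^{*} = d_{f}$ we get $\SB^{*} = d_{f}^{*} \circ G^{*} \circ d_{f} = - d_{f}^{*} \circ G \circ d_{f} = - \SB$. For the \emph{annihilation of the Gibbs state}, the definition of $d_{f}$ in \eqref{k1} gives $d_{f} ( e^{- f / h} ) = e^{- f / h} \, h \nabla ( e^{f / h} e^{- f / h} ) = e^{- f / h} \, h \nabla ( 1 ) = 0$, hence $\SB ( e^{- f / h} ) = d_{f}^{*} G \cdot 0 = 0$.

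It remains to put $\SB$ in the stated form. I would first record the two building blocks: from \eqref{k1}, $d_{f} u = \big( ( h \partial_{x_{1}} + \partial_{x_{1}} f ) u , ( h \partial_{x_{2}} + \partial_{x_{2}} f ) u \big)$, while integration by parts gives $d_{f}^{*} ( v_{1} , v_{2} ) = ( - h \partial_{x_{1}} + \partial_{x_{1}} f ) v_{1} + ( - h \partial_{x_{2}} + \partial_{x_{2}} f ) v_{2}$. Inserting $G d_{f} u = \big( g ( h \partial_{x_{2}} + \partial_{x_{2}} f ) u , - g ( h \partial_{x_{1}} + \partial_{x_{1}} f ) u \big)$ into $d_{f}^{*}$ and expanding, the two pure second-order contributions are $- h^{2} g \, \partial_{x_{1}} \partial_{x_{2}} u$ and $+ h^{2} g \, \partial_{x_{1}} \partial_{x_{2}} u$ and cancel — this is exactly where the antisymmetry of $G$ enters — so $\SB$ is a first-order operator. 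Collecting the remaining terms, the coefficient of $h \partial_{x_{1}} u$ is $h \partial_{x_{2}} g - 2 g \partial_{x_{2}} f = b_{1}$, that of $h \partial_{x_{2}} u$ is $- h \partial_{x_{1}} g + 2 g \partial_{x_{1}} f = b_{2}$, and the coefficient of $u$ is $h ( \partial_{x_{1}} f \, \partial_{x_{2}} g - \partial_{x_{2}} f \, \partial_{x_{1}} g )$. One then checks that this last quantity equals $\frac{h}{2} \div b$: a one-line computation of $\partial_{x_{1}} b_{1} + \partial_{x_{2}} b_{2}$ (in which the $\partial_{x_{1}} \partial_{x_{2}} g$ terms cancel, as do the $\partial_{x_{1}} \partial_{x_{2}} f$ ones) gives $\div b = 2 ( \partial_{x_{2}} g \, \partial_{x_{1}} f - \partial_{x_{1}} g \, \partial_{x_{2}} f )$. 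Therefore $\SB u = b \cdot h \nabla u + \frac{h}{2} ( \div b ) u = \frac{1}{2} ( b \cdot h \nabla + h \div \circ b ) u$, and $b \in C^{\infty}_{0} ( \R^{2} ; \R^{2} )$ since $g$ and $f$ are smooth and $\supp b \subset \supp g$.

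Alternatively, once $\SB$ is known to be anti-adjoint and first order with smooth real coefficients, the last step can be shortcut: for $L = b \cdot h \nabla + h c$ one has $L^{*} = - L$ if and only if $c = \frac{1}{2} \div b$, so such an $L$ is automatically of the form $\frac{1}{2} ( b \cdot h \nabla + h \div \circ b )$ and only its first-order part needs to be computed to identify $b$. I expect no genuine obstacle here; the only mildly delicate point is the bookkeeping in expanding $d_{f}^{*} \circ G \circ d_{f}$ — keeping track of the powers of $h$ and using the antisymmetry of $G$ to eliminate the second-order part.
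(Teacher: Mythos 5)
Your proposal is correct and follows essentially the same route as the paper: the anti-adjointness and $\SB e^{-f/h}=0$ are read off directly from the definition $\SB = d_{f}^{*}\circ G\circ d_{f}$, and the stated form of $b$ is obtained by the same direct expansion of $d_{f}^{*}\circ G\circ d_{f}$, with the antisymmetry of $G$ killing the second-order terms. Your extra verification that the zeroth-order coefficient equals $\tfrac{h}{2}\div b$ (or the shortcut via $L^{*}=-L \Leftrightarrow c=\tfrac12\div b$) just makes explicit a step the paper leaves implicit.
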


\begin{proof}
The definition \eqref{a3} of $\SB$ immediately implies that $\SB$ is formally anti-adjoint and that $\SB e^{- f / h} = 0$. Moreover, a direct computation gives
\begin{align*}
\SB ={}& \big( - h \partial_{x_{1}} + \partial_{x_{1}} f , - h \partial_{x_{2}} + \partial_{x_{2}} f \big)
\begin{pmatrix}
0 & g \\
- g & 0
\end{pmatrix} \begin{pmatrix}
h \partial_{x_{1}} + \partial_{x_{1}} f \\
h \partial_{x_{2}} + \partial_{x_{2}} f
\end{pmatrix}  \\
={}& ( - h \partial_{x_{1}} + \partial_{x_{1}} f ) g ( h \partial_{x_{2}} + \partial_{x_{2}} f ) - ( -h \partial_{x_{2}} + \partial_{x_{2}} f ) g ( h \partial_{x_{1}} + \partial_{x_{1}} f )  \\
={}& ( \partial_{x_{1}} f ) g ( h \partial_{x_{2}} ) + ( h \partial_{x_{2}} ) g ( \partial_{x_{1}} f ) - ( \partial_{x_{2}} f ) g ( h \partial_{x_{1}} ) - ( h \partial_{x_{1}} ) g ( \partial_{x_{2}} f )  \\
&- ( h \partial_{x_{1}} ) g ( h \partial_{x_{2}} ) + ( h \partial_{x_{2}} ) g ( h \partial_{x_{1}} )  \\
={}& \frac{1}{2} \big( b \cdot h \nabla + h \div b \big) ,
\end{align*}
and the lemma follows.
\end{proof}

Since we see $B$ as a perturbation of $P_{0}$  and want to use the Kato's theory, we seek the function $g \in C^{\infty}_{0} ( \R^{2} ; \R )$ such that $\Pi_{2 3} \SB \Pi_{2 3} \neq 0$. For that, let $( u , v )$ be a real-valued orthonormal basis of $\im \Pi_{2 3}$. From \eqref{a3}, we have
\begin{align}
\< \SB u , v \> &= \int_{\R^{2}} G e^{- f / h} h \nabla e^{f / h} u \cdot e^{- f / h} h \nabla e^{f / h} v \, d x  \nonumber \\
&= \int_{\R^{2}} \widetilde{g} \big( \partial_{x_{2}} \widetilde{u} \partial_{x_{1}} \widetilde{v} - \partial_{x_{1}} \widetilde{u} \partial_{x_{2}} \widetilde{v} \big) \, d x , \label{a21}
\end{align}
with
\begin{equation} \label{a15}
\widetilde{u} = e^{f / h} u , \qquad \widetilde{v} = e^{f / h} v \qquad \text{and} \qquad \widetilde{g} = h^{2} e^{- 2 f / h} g .
\end{equation}
Thus, if $\partial_{x_{2}} \widetilde{u} \partial_{x_{1}} \widetilde{v} - \partial_{x_{1}} \widetilde{u} \partial_{x_{2}} \widetilde{v}$ does not vanish identically, it is possible to find $g$ such that $\< \SB u , v \> \neq 0$. This justify the next intermediate result.

\begin{lemma}\sl \label{a16}
For $h$ small enough, we have $\partial_{x_{2}} \widetilde{u} \partial_{x_{1}} \widetilde{v} - \partial_{x_{1}} \widetilde{u} \partial_{x_{2}} \widetilde{v} \nequiv{} 0$.
\end{lemma}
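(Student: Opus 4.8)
The plan is to argue by contradiction: assume the Jacobian-type expression $J := \partial_{x_{2}} \widetilde{u} \, \partial_{x_{1}} \widetilde{v} - \partial_{x_{1}} \widetilde{u} \, \partial_{x_{2}} \widetilde{v}$ vanishes identically on $\R^{2}$, and derive a contradiction with the structure of $\im \Pi_{23}$ described in Section \ref{s2}. The quantity $J$ is (up to sign) the Jacobian determinant of the map $(x_{1},x_{2}) \mapsto (\widetilde u, \widetilde v)$; its identical vanishing means that $\widetilde u$ and $\widetilde v$ are functionally dependent wherever their gradients do not both vanish. Note that $\widetilde u = e^{f/h} u$ and $\widetilde v = e^{f/h} v$ share the same sign behaviour as $u$ and $v$, since $e^{f/h} > 0$, but are no longer in $L^2$; nonetheless they are smooth, and they span, after multiplication by $e^{-f/h}$, the two-dimensional eigenspace $\im \Pi_{23}$.

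First I would reduce to a convenient choice of basis. By Lemma \ref{a1}, $\im \Pi_{23}$ is the two-dimensional eigenspace of $P_0$ for $\lambda_2=\lambda_3$, and by \eqref{a32} it is stable under complex conjugation and under the rotation $R$. Since $R^3 = Id$ on this space, $R$ acts with eigenvalues among the cube roots of unity; because the space is real-two-dimensional and $R$ is a real rotation of order $3$, $R$ restricted to $\im\Pi_{23}$ is conjugate to rotation by $2\pi/3$. I would pick a real orthonormal basis $(u,v)$ in which $R$ acts as this rotation; equivalently, writing $w = u + i v$, one has $w \circ R = e^{\pm 2\pi i/3} w$. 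Then $\widetilde w := e^{f/h} w = \widetilde u + i \widetilde v$ satisfies $\widetilde w \circ R = e^{\pm 2\pi i /3}\, \widetilde w$ as well. Observe that $J = \im(\overline{\partial_{x_1}\widetilde w}\,\partial_{x_2}\widetilde w) \cdot(\pm 1)$, more precisely $\partial_{x_1}\widetilde u\,\partial_{x_2}\widetilde v - \partial_{x_2}\widetilde u\,\partial_{x_1}\widetilde v = \im(\partial_{x_1}\widetilde w\,\overline{\partial_{x_2}\widetilde w})$, so $J \equiv 0$ says exactly that at every point the (complex) vector $(\partial_{x_1}\widetilde w, \partial_{x_2}\widetilde w) \in \C^2$ is proportional to a real vector, i.e. $d\widetilde w$ has constant "phase" direction locally.

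The key step is then to exploit the combination of this rigidity with the $R$-equivariance to reach a contradiction. If $J\equiv 0$, then on the open set where $d\widetilde w \neq 0$ the image of a small disc under $\widetilde w \colon \R^2 \to \C \cong \R^2$ lies on a curve (the level sets of $\widetilde u$ and $\widetilde v$ coincide), so $\widetilde w$ is, locally, a function of a single real-valued function $\varphi$: $\widetilde w = \Phi \circ \varphi$ for some curve $\Phi$. Pulling back the eigenvalue equation $P_0 w = \lambda_2 w$ — which in the conjugated picture reads $\Delta_f \widetilde w := (- h^2 \Delta + 2 h \nabla f \cdot \nabla)\widetilde w$ proportional to $\widetilde w$ after the standard conjugation of the Witten Laplacian — one finds that $\varphi$ itself is forced to satisfy a scalar second-order equation and that $\widetilde w$ would have to be real up to a constant complex multiple, contradicting that $u$ and $v$ are linearly independent. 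Alternatively, and more robustly, I would use the structure of nodal sets: a real eigenfunction of $P_0$ with exponentially small eigenvalue is, by the WKB/Witten-Laplacian analysis recalled in Section \ref{s2}, up to $\CO(e^{-\delta/h})$ a linear combination $\sum_j c_j \varphi_j$ with $\varphi_j$ concentrated near $m_j$; in particular $u$ and $v$ have {\it different} nodal sets (one can arrange, via the $R$-action, that $v$ is obtained from $u$ by the rotation of order $3$, so their nodal lines are genuinely rotated copies and not parallel), hence the level sets of $\widetilde u$ and $\widetilde v$ cannot coincide on an open set, so $J \not\equiv 0$.

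I expect the main obstacle to be making the nodal-set / functional-independence argument rigorous without circularity: one must be sure that the two eigenfunctions spanning $\im\Pi_{23}$ are not secretly functionally dependent, which ultimately comes down to the fact (from the Witten Laplacian theory of \cite{HeHiSj11_01,Mi19_01}, and the almost-orthonormal model $(\varphi_j)_j$ from Section \ref{s2}) that the eigenspace "sees" all three wells in a genuinely two-dimensional way and is permuted cyclically by $R$. Once that is in hand — say by choosing $u$ so that $u + u\circ R + u\circ R^2 = 0$ (possible since $e^{-f/h}\propto \varphi_1+\varphi_2+\varphi_3 \perp \im\Pi_{23}$) and $v = (u\circ R - u\circ R^2)/\sqrt 3$, so that $\widetilde v$ is, near $m_1$, essentially $(\varphi_2 - \varphi_3)$-like while $\widetilde u$ is $(2\varphi_1 - \varphi_2 - \varphi_3)$-like — the gradients $\nabla\widetilde u$ and $\nabla\widetilde v$ point in transverse directions on a nonempty open set (e.g. between two wells), giving $J\not\equiv 0$ there, and the lemma follows.
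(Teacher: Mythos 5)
There is a genuine gap. Your central step is the claim that, because $u$ and $v$ (or their WKB models) have ``different'' nodal sets, the identity $\partial_{x_{2}}\widetilde u\,\partial_{x_{1}}\widetilde v-\partial_{x_{1}}\widetilde u\,\partial_{x_{2}}\widetilde v\equiv 0$ is impossible. But this identity only says that the level foliations of $\widetilde u$ and $\widetilde v$ are aligned wherever the gradients do not vanish; a priori the zero set of $\widetilde v$ could be a \emph{nonzero} level curve of $\widetilde u$, so ``different nodal sets'' does not by itself contradict the vanishing of the Jacobian. What is really needed (and what the paper supplies in Proposition \ref{a17}) is: (i) that each real eigenfunction for $\lambda_{2}$ has exactly two nodal domains and its nodal set is a single smooth curve with nonvanishing gradient, and (ii) that the nodal sets of any two such eigenfunctions must \emph{intersect} — this is the Courant/Cheng argument via the Dirichlet minimum principle, which is absent from your plan. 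With (i)--(ii) one flows along the Hamiltonian field of $\widetilde u$ from an intersection point to conclude $u^{-1}(0)=v^{-1}(0)$, and the contradiction then comes not from the nodal sets differing but from orthogonality: coinciding nodal sets force $\< u,v\>\neq 0$, against $\< u,v\>=0$.

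Your quantitative fallback is also not available: the basis $(\varphi_{j})_{j}$ of Section \ref{s2} determines $u$ and $v$ only up to $\CO(e^{-\delta/h})$ errors in $L^{2}$, while the nodal curves and the region ``between two wells'' lie precisely where the leading terms are themselves exponentially small; hence no pointwise control of $\nabla\widetilde u$, $\nabla\widetilde v$ (let alone their transversality) follows there, and the errors can dominate. Moreover, with your choice $v=(u\circ R-u\circ R^{2})/\sqrt 3$, $v$ is a linear combination of rotates of $u$, not a rotate of $u$, so its nodal set is not a rotated copy of $u^{-1}(0)$. Finally, the first route you sketch (writing $w=u+iv$, $R$-equivariance, and asserting that functional dependence plus the eigenvalue equation forces $\widetilde w$ to be real up to a constant factor) is stated but not proved: the functional dependence obtained from the vanishing Jacobian is only local and breaks down at critical points, and no argument is given that globalizes it or derives the claimed rigidity from the PDE. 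As it stands, the proposal identifies the right objects (contradiction, functional dependence, nodal sets) but misses the two ingredients the proof actually rests on: the intersection property of nodal sets and the orthogonality contradiction after the flow argument.
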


\begin{proof}
We prove this lemma by contradiction. If it does not hold true, we have
\begin{equation} \label{a19}
\partial_{x_{2}} \widetilde{u} \partial_{x_{1}} \widetilde{v} - \partial_{x_{1}} \widetilde{u} \partial_{x_{2}} \widetilde{v} \equiv 0 ,
\end{equation}
for a sequence of positive $h$ which goes to $0$. Roughly speaking, this equation means that the level sets of $\widetilde{u}$ and $\widetilde{v}$ are the same. This leads to consider the nodal sets of $u$ and $v$ whose we recall now the general properties.

\begin{proposition}\sl \label{a17}
Let $w$ be a real-valued eigenvector of $P_{0}$ associated to the eigenvalue $\lambda_{2}$ (in particular, $w \in \CS ( \R^{2} ; \R )$ and $w \nequiv 0$). Then,

$1)$ the open set $\R^{2} \setminus w^{- 1} ( 0 )$ has precisely two connected components $\Omega_{\pm}^{w}$ on which $\pm w > 0$,

$2)$ the nodal set $w^{- 1} ( 0 )$ is a (unique) smooth curve without crossing on which $\nabla w \neq 0$,

$3)$ if $w_{1}$ and $w_{2}$  are two of such eigenvectors, then $w_{1}^{- 1} ( 0 ) \cap w_{2}^{- 1} ( 0 ) \neq \emptyset$.
\end{proposition}

\begin{proof}[Proof of Proposition \ref{a17}]
This result collects classical properties of nodal sets and we send the reader to the corresponding papers for the proofs. First, Section VI.6 of \cite{CoHi53_01} (see also \cite{Co23_01}) shows that $\R^{2} \setminus w^{- 1} ( 0 )$ has at most two connected components. This result, originally stated in domains, extends to our setting since the potential $V$ is confining. Moreover, if $\R^{2} \setminus w^{- 1} ( 0 )$ has only one connected component, this function has a constant sign and cannot be orthogonal to the positive function $e^{- f / h}$, an eigenvector of $P_{0}$ associated to its first eigenvalue $\lambda_{1}$. Summing up, $\R^{2} \setminus w^{- 1} ( 0 )$ has precisely two connected components.

The structure of the nodal set $w^{- 1} ( 0 )$ is described in Theorem 2.5 of \cite{Ch76_01} in the present two dimensional case (see also Theorem 2.2 in the general case). Outside of isolated critical points, $w^{- 1} ( 0 )$ is the reunion of smooth curves without crossing on which $\nabla w \neq 0$. At the critical points, a finite number of nodal curves cross and form an equiangular system. If such a critical point exists, then there will be more than two connected components in $\R^{2} \setminus w^{- 1} ( 0 )$. Thus, there is no critical point and $w^{- 1} ( 0 )$ is the reunion of smooth curves without crossing. Since such a curve is either periodical or goes to infinity, each curve in $w^{- 1} ( 0 )$ generates a connected component in $\R^{2} \setminus w^{- 1} ( 0 )$. Since this set has precisely two connected components, $w^{- 1} ( 0 )$ must be composed of an unique curve on which $\nabla w \neq 0$. Since $w$ changes sign across $w^{- 1} ( 0 )$, the connected components of $\R^{2} \setminus w^{- 1} ( 0 )$ can be labeled $\Omega_{\pm}^{w}$ in a such way that $\pm w > 0$ on $\Omega_{\pm}^{w}$. This proves $1)$ and $2)$.

It remains to show $3)$. For that, we follow the proof of Lemma 4.2 of \cite{Ch76_01}. Assume that $w_{1}^{- 1} ( 0 ) \cap w_{2}^{- 1} ( 0 ) = \emptyset$. Since $w_{1}^{- 1} ( 0 )$ is a single curve, we have $w_{1}^{- 1} ( 0 ) \subset \Omega_{-}^{w_{2}}$ or $w_{1}^{- 1} ( 0 ) \subset \Omega_{+}^{w_{2}}$. We can suppose that $w_{1}^{- 1} ( 0 ) \subset \Omega_{-}^{w_{2}}$.  Then, $\Omega_{-}^{w_{1}} \varsubsetneq \Omega_{-}^{w_{2}}$ or $\Omega_{+}^{w_{1}} \varsubsetneq \Omega_{-}^{w_{2}}$. We can suppose that $\Omega_{-}^{w_{1}} \varsubsetneq \Omega_{-}^{w_{2}}$. Eventually, by Courant's minimum principle, the first eigenvalue of the operator $P_{0}$ restricted to $\Omega_{-}^{w_{1}}$ with Dirichlet boundary condition is greater than the first eigenvalue of the operator $P_{0}$ restricted to $\Omega_{-}^{w_{2}}$ with Dirichlet boundary condition, whereas these two quantity are equal to $\lambda_{2}$. This is a contradiction and $3)$ follows.
\end{proof}

We now come back to the proof of Lemma \ref{a16}. From \eqref{a15}, the zeros of $\widetilde{u}$ (resp. $\widetilde{v}$) are those of $u$ (resp. $v$). Moreover, Lemma \ref{a17} $2)$ shows that
\begin{equation} \label{a18}
\nabla \widetilde{u} = e^{f / h} \nabla u + u \nabla e^{f / h} = e^{f / h} \nabla u \neq 0 ,
\end{equation}
on $\widetilde{u}^{- 1} ( 0 )$.  Let $x_{0}$ be a point of $\widetilde{u}^{- 1} ( 0 ) \cap \widetilde{v}^{- 1} ( 0 )$ which is not empty from Lemma \ref{a17} $3)$, and consider the curve $x ( t ) \in \R^{2}$ solution of 
\begin{equation*}
\left\{ \begin{aligned}
&\partial_{t} x ( t ) = \begin{pmatrix}
\partial_{x_{2}} \widetilde{u}  ( x ( t ) )\\
- \partial_{x_{1}} \widetilde{u} ( x ( t ) )
\end{pmatrix} ,  \\
&x( 0 ) = x_{0} .
\end{aligned} \right.
\end{equation*}
The definition of $x ( t )$ gives $\partial_{t} \widetilde{u} ( x ( t ) ) = ( \partial_{x_{1}} \widetilde{u} \partial_{x_{2}} \widetilde{u} - \partial_{x_{2}} \widetilde{u} \partial_{x_{1}} \widetilde{u} ) ( x ( t ) ) = 0$, showing that $\widetilde{u} ( x ( t ) ) = 0$ for all $t \in \R$. Combined with Lemma \ref{a17} $2)$ and \eqref{a18}, it implies that $x ( t )$ is a parametrization of $\widetilde{u}^{- 1} ( 0 )$. On the other hand, \eqref{a19} yields
\begin{equation*}
\partial_{t} \widetilde{v} ( x ( t ) ) = ( \partial_{x_{1}} \widetilde{v} \partial_{x_{2}} \widetilde{u} - \partial_{x_{2}} \widetilde{v} \partial_{x_{1}} \widetilde{u} ) ( x ( t ) ) = 0 ,
\end{equation*}
showing as before that $\widetilde{v} ( x ( t ) ) = \widetilde{v} ( x_{0} ) = 0$ for all $t \in \R$. This proves $u^{- 1} ( 0 ) = v^{- 1} ( 0 )$ from Proposition \ref{a17} $2)$. Using Proposition \ref{a17} $1)$, we deduce $\Omega_{\pm}^{u} = \Omega_{\pm}^{v}$ or $\Omega_{\pm}^{u} = \Omega_{\mp}^{v}$. It implies $\< u , v \> > 0$ or $\< u , v \> < 0$ respectively. On the other hand, $\< u , v \> = 0 $ since $( u , v )$ is orthogonal. This contradiction finishes the proof of Lemma \ref{a16}.
\end{proof}

Let $\chi \in C^{\infty}_{0} ( \R^{2} ; [ 0 , 1 ] )$ with $\supp \chi \subset B ( 0 , 1 )$ and $\chi = 1$ on $B ( 0 , 1 / 2 )$. From Lemma \ref{a16}, there exists $x_{0} = x_{0} ( h ) \in \R^{2}$ for $h$ small enough such that $( \partial_{x_{2}} \widetilde{u} \partial_{x_{1}} \widetilde{v} - \partial_{x_{1}} \widetilde{u} \partial_{x_{2}} \widetilde{v} ) ( x_{0} ) \neq 0$. By continuity ($\widetilde{u} , \widetilde{v} \in C^{\infty} ( \R^{2} )$), there exists $\nu = \nu ( h ) \in ] 0 , 1 ]$ such that $\partial_{x_{2}} \widetilde{u} \partial_{x_{1}} \widetilde{v} - \partial_{x_{1}} \widetilde{u} \partial_{x_{2}} \widetilde{v}$ does not change it sign in $B ( x_{0} , \nu )$. We then set
\begin{equation} \label{a24}
g ( x , h ) = \< x_{0} \>^{- 1} e^{- 1 / \nu} \chi \Big( \frac{x - x_{0}}{\nu} \Big) \in C^{\infty}_{0} ( \R^{2} ; \R ) ,
\end{equation}
which satisfies, for $h$ small enough and $\alpha \in \N^{2}$,
\begin{equation} \label{a25}
\forall x \in \R^{2} , \qquad \vert \partial_{x}^{\alpha} g ( x , h ) \vert = \< x_{0} \>^{- 1} \nu^{- \vert \alpha \vert} e^{- 1 / \nu} \Big\vert \chi^{( \alpha )} \Big( \frac{x - x_{0}}{\nu} \Big) \Big\vert \leq M_{\alpha} \< x_{0} \>^{- 1} ,
\end{equation}
for some constant $M_{\alpha} > 0$. Combining with Lemma \ref{a2} and $f = x^{2}$ outside a compact set, it shows that $b ( x , h )$ is a symbol of class $S ( 1 )$. Moreover, using $\widetilde{g} = h^{2} e^{- 2 f / h} g$ and \eqref{a21}, this construction yields
\begin{equation} \label{a23}
\< \SB u , v \> = \beta ,
\end{equation}
for $h$ small enough and some constant $\beta ( h ) \neq 0$.

\begin{lemma}\sl \label{a22}
In any real-valued orthonormal basis of $\im \Pi_{2 3}$, the operator $\Pi_{2 3} \SB \Pi_{2 3}$ writes
\begin{equation*}
\Pi_{2 3} \SB \Pi_{2 3} = \begin{pmatrix}
0 & \gamma \\
- \gamma & 0
\end{pmatrix} ,
\end{equation*}
for $h$ small enough and some constant $\gamma ( h ) \in \R \setminus \{ 0 \}$.
\end{lemma}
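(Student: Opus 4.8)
The plan is to combine two ingredients, both already in place. First, $\SB$ is formally anti-adjoint by Lemma~\ref{a2}, and this pins down the \emph{shape} of the compressed operator: the compression of an anti-adjoint operator to a two-dimensional real subspace is represented, in any real orthonormal basis, by an antisymmetric matrix, hence by $\begin{pmatrix} 0 & \gamma \\ -\gamma & 0 \end{pmatrix}$. Second, the construction of $g$ in \eqref{a24} was designed precisely so that this $\gamma$ does not vanish, the relevant non-degeneracy being recorded in \eqref{a23}.

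Concretely, I would proceed as follows. Recall that $\im\Pi_{23}$ is two-dimensional (Lemma~\ref{a1}) and, since $\Pi_{23}$ commutes with complex conjugation by \eqref{a32}, it is spanned by real-valued functions, so real-valued orthonormal bases exist; moreover every element of $\im\Pi_{23}$ lies in $\CS(\R^2)$, so that $\SB$ acts on it in the classical sense and the formal anti-adjointness of Lemma~\ref{a2} becomes the genuine identity $\< \SB \phi , \psi \> = - \< \phi , \SB \psi \>$ for $\phi, \psi \in \im\Pi_{23}$. Fix a real-valued orthonormal basis $( e_1 , e_2 )$ of $\im\Pi_{23}$. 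Since $\SB$ has real coefficients and the $e_j$ are real, the numbers $\< \SB e_j , e_k \>$ are real; anti-adjointness gives $\< \SB e_j , e_j \> = 0$ and $\< \SB e_1 , e_2 \> = - \< \SB e_2 , e_1 \>$. As $e_j \in \im\Pi_{23}$ and $\Pi_{23}$ is the orthogonal projection onto $\im\Pi_{23}$, these are the entries of $\Pi_{23}\SB\Pi_{23}$ in this basis, so $\Pi_{23}\SB\Pi_{23} = \begin{pmatrix} 0 & \gamma \\ -\gamma & 0 \end{pmatrix}$ with $\gamma = \< \SB e_1 , e_2 \> \in \R$.

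It remains to see $\gamma \neq 0$. For the particular real orthonormal basis $( u , v )$ of $\im\Pi_{23}$ used in Section~\ref{s3}, \eqref{a23} gives $\< \SB u , v \> = \beta(h) \neq 0$ for $h$ small; since $\< \SB u , v \> = \< \Pi_{23}\SB\Pi_{23} u , v \>$, the operator $\Pi_{23}\SB\Pi_{23}$ is nonzero, hence $\gamma(h) \neq 0$ in every real-valued orthonormal basis. If one wishes, one also checks that $\gamma$ is basis-independent up to sign: an orthogonal change of basis $O$ conjugates $\begin{pmatrix} 0 & \gamma \\ -\gamma & 0 \end{pmatrix}$ into $(\det O)\begin{pmatrix} 0 & \gamma \\ -\gamma & 0 \end{pmatrix}$, so $|\gamma(h)|$ is well-defined. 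There is no serious obstacle here: the analytic content — the multiplicity-two structure of $\lambda_2$ and the non-vanishing $\< \SB u , v \> \neq 0$ — has already been extracted in Lemmas~\ref{a1} and~\ref{a16} and in the construction \eqref{a24}--\eqref{a23}; the only points deserving a word are the passage from ``formally anti-adjoint'' to an actual adjoint identity on $\im\Pi_{23}$ (immediate from the Schwartz regularity of the eigenfunctions) and the harmless sign ambiguity in $\gamma$.
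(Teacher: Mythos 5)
Your proposal is correct and follows essentially the same route as the paper: in a real-valued orthonormal basis the compression of the anti-adjoint, real-coefficient operator $\SB$ is an antisymmetric real $2\times 2$ matrix, and the non-vanishing of $\gamma$ is deduced from \eqref{a23}, since $\gamma=0$ would force $\Pi_{2 3}\SB\Pi_{2 3}=0$ and hence $\<\SB u , v\>=0$. The paper phrases this last step as a contradiction along a sequence $h\to 0$, but the content is identical to your direct argument.
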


\begin{proof}
In a real-valued orthonormal basis $( e_{1} , e_{2} )$ of $\im \Pi_{2 3}$, we have
\begin{equation*}
\Pi_{2 3} \SB \Pi_{2 3} = \begin{pmatrix}
\< e_{1} , \SB e_{1} \> & \< e_{1} , \SB e_{2} \> \\
\< e_{2} , \SB e_{1} \> & \< e_{2} , \SB e_{2} \>
\end{pmatrix} .
\end{equation*}
Since $e_{1} , e_{2}$ are real-valued, \eqref{a3} gives $\< e_{1} , \SB e_{1} \> = \< e_{2} , \SB e_{2} \> = 0$ and $\< e_{2} , \SB e_{1} \> = - \< e_{1} , \SB e_{2} \>$. Let us assume that $\< e_{1} , \SB e_{2} \> = 0$ for a sequence of positive $h$ which goes to $0$. In that case, the previous relations imply $\< e_{j} , \SB e_{k} \> = 0$ for all $j , k \in \{ 1 , 2 \}$. Since $( e_{1} , e_{2} )$ is a basis of $\im \Pi_{2 3}$, it yields $\< \SB u , v \> = 0$ in contradiction with \eqref{a23}. Summing up, $\gamma ( h ) : = \< e_{1} , \SB e_{2} \> \neq 0$ for $h$ small enough.
\end{proof}

\section{Proof of Theorem \ref{a20}} \label{s4}

We now apply the perturbation theory for all $h$ fixed small enough. Let $P_{\rm com} = P_{0} + B$ with
\begin{equation}
B = \varepsilon \SB ,
\end{equation}
where $\SB$ has been constructed in Section \ref{s3}.

\begin{proposition}\sl \label{a26}
The operator $P_{\rm com}$ is closed on the domain of $P_{0}$. Moreover, for $h$ small enough, there exist $\varepsilon_{0} ( h ) > 0$ and three analytic functions $\varepsilon \longmapsto \lambda_{1} ( \varepsilon , h ) , \lambda_{2} ( \varepsilon , h ) , \lambda_{3} ( \varepsilon , h )$ defined for $\varepsilon \in [ - \varepsilon_{0} , \varepsilon_{0} ]$ with $\lambda_{1} ( \varepsilon , h ) = 0$,
\begin{equation*}
\left\{ \begin{aligned}
&\lambda_{2} ( \varepsilon , h ) = \lambda_{2} ( h ) + i \gamma ( h ) \varepsilon + \CO_{h} ( \varepsilon^{2} ) ,  \\
&\lambda_{3} ( \varepsilon , h ) = \lambda_{2} ( h ) - i \gamma ( h ) \varepsilon + \CO_{h} ( \varepsilon^{2} ) ,  \\
\end{aligned} \right.
\end{equation*}
such that
\begin{equation*}
\sigma ( P_{\rm com} ) \cap \{ z \in \C ; \ \re z < h \lambda_{*} / 2 \} = \{ \lambda_{1} , \lambda_{2} , \lambda_{3} \} ,
\end{equation*}
for $h$ small enough and $\varepsilon \in [ - \varepsilon_{0} , \varepsilon_{0} ]$.
\end{proposition}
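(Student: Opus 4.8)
The plan is to apply standard analytic perturbation theory (Kato) to the family $P_{\rm com}(\varepsilon) = P_0 + \varepsilon\SB$ with $h$ fixed and small, treating $\varepsilon$ as the perturbation parameter. First I would check that $\SB$ is relatively bounded with respect to $P_0$: since $b \in C^\infty_0(\R^2;\R^2)$ by Lemma \ref{a2}, the first-order operator $\SB = \tfrac12(b\cdot h\nabla + h\div\circ b)$ maps $\SD(P_0) = H^2 \cap \langle x\rangle^{-2}L^2$ into $L^2$ with a bound of the form $\|\SB u\| \le C_h(\|P_0 u\| + \|u\|)$ (indeed $\SB u$ involves only first derivatives of $u$ times compactly supported smooth coefficients, so the $P_0$-bound can be taken with relative bound zero for $h$ fixed). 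Hence $P_{\rm com}(\varepsilon)$ is closed on $\SD(P_0)$ for all $\varepsilon$ and forms an analytic family of type (A) in Kato's sense; this gives closedness and sets up the perturbation framework.

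Next I would isolate the bottom of the spectrum. By Proposition \ref{a30} (equivalently Theorem \ref{a81} applied to $P_0$), for $h$ small there are exactly three eigenvalues $0 = \lambda_1 < \lambda_2 = \lambda_3$ of $P_0$ in $]-\infty,\lambda_* h]$, all exponentially small, and the rest of $\sigma(P_0)$ lies above $\lambda_* h$. Fix a contour $\gamma_h$ (say a circle of radius $\lambda_* h/4$ centered at the origin, or a suitable rectangle) enclosing these three eigenvalues and staying at distance $\gtrsim h$ from the rest of the spectrum; on $\gamma_h$ the resolvent $(z-P_0)^{-1}$ is bounded by $C/h$. For $|\varepsilon|$ small enough — concretely $|\varepsilon| \le \varepsilon_0(h)$ with $\varepsilon_0(h)$ chosen so that $\varepsilon\|\SB(z-P_0)^{-1}\| < 1$ uniformly on $\gamma_h$, which is possible since $\|\SB(z-P_0)^{-1}\|$ is bounded on the contour — the resolvent $(z - P_{\rm com}(\varepsilon))^{-1}$ exists on $\gamma_h$ via the Neumann series, so the spectral projection $\Pi(\varepsilon) = \frac{1}{2i\pi}\oint_{\gamma_h}(z - P_{\rm com}(\varepsilon))^{-1}\,dz$ is well-defined, analytic in $\varepsilon$, of rank $3$, and $P_{\rm com}(\varepsilon)$ has exactly three eigenvalues (with multiplicity) inside $\gamma_h$ and none between $\gamma_h$ and $\{\re z = \lambda_* h/2\}$. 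This yields the claimed description of $\sigma(P_{\rm com})\cap\{\re z < h\lambda_*/2\}$.

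It then remains to compute the eigenvalues. The eigenvalue $\lambda_1(\varepsilon,h) = 0$ persists exactly: by Lemma \ref{a2}, $\SB e^{-f/h} = 0$, and $P_0 e^{-f/h} = 0$, so $e^{-f/h} \in \ker P_{\rm com}(\varepsilon)$ for all $\varepsilon$; moreover $0$ stays simple since it is simple for $\varepsilon=0$ and $\Pi(\varepsilon)$ varies analytically. For the other two, I would study the $2\times2$ reduced operator. Let $\Pi_{23}$ be the rank-two spectral projection of $P_0$ for $\{\lambda_2\}$, and let $\Pi_{23}(\varepsilon)$ be its perturbed analogue (obtained from a contour enclosing only $\lambda_2(\varepsilon),\lambda_3(\varepsilon)$, which separate from $0$). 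Transporting to the fixed space $\im\Pi_{23}$ via the Kato transformation operator, the effective operator is $M(\varepsilon) = \Pi_{23}P_{\rm com}(\varepsilon)\Pi_{23} + \varepsilon^2(\cdots)$; its eigenvalues are those of $P_{\rm com}(\varepsilon)$ near $\lambda_2$. Choosing a real-valued orthonormal basis $(e_1,e_2)$ of $\im\Pi_{23}$, we have $\Pi_{23}P_0\Pi_{23} = \lambda_2 Id$ and, by Lemma \ref{a22}, $\Pi_{23}\SB\Pi_{23} = \left(\begin{smallmatrix}0 & \gamma\\ -\gamma & 0\end{smallmatrix}\right)$ with $\gamma(h)\ne 0$. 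Hence $M(\varepsilon) = \left(\begin{smallmatrix}\lambda_2 & \varepsilon\gamma\\ -\varepsilon\gamma & \lambda_2\end{smallmatrix}\right) + \CO_h(\varepsilon^2)$, whose eigenvalues are $\lambda_2 \pm i\gamma\varepsilon + \CO_h(\varepsilon^2)$. This gives the stated expansions for $\lambda_2(\varepsilon,h)$ and $\lambda_3(\varepsilon,h)$, and their analyticity follows from analyticity of $M(\varepsilon)$ together with the fact that the two branches are distinct for small $\varepsilon\ne 0$ (so the square-root in the eigenvalue formula is of an analytic non-vanishing function — here $\gamma^2\varepsilon^2 + \CO(\varepsilon^3) = \varepsilon^2(\gamma^2 + \CO(\varepsilon))$, whose square root $\varepsilon\sqrt{\gamma^2+\CO(\varepsilon)}$ is analytic near $\varepsilon=0$).

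The main obstacle is the bookkeeping of the $h$-dependence versus the $\varepsilon$-dependence: all constants (contour radius, resolvent bounds, hence $\varepsilon_0$) depend on $h$, and one must be careful that the perturbative expansion is valid only for $\varepsilon$ small relative to the spectral gaps of $P_0$, which are themselves exponentially small in $h$. This is why the statement fixes $h$ first and then takes $\varepsilon \in [-\varepsilon_0(h),\varepsilon_0(h)]$; the $\CO_h(\varepsilon^2)$ notation records that the remainders are controlled only for fixed $h$. A secondary technical point is verifying that $\SB$ is genuinely $P_0$-bounded with relative bound zero (needed for the type-(A) family structure), which uses the compact support and smoothness of $b$ together with elliptic regularity for $P_0 = -h^2\Delta + |\nabla f|^2 - h\Delta f$.
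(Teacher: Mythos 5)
Your proposal is correct and follows essentially the same route as the paper: closedness via relative boundedness/compactness of $\SB$ with respect to $P_{0}$ (Lemma \ref{a2}), analytic (type (A)) perturbation theory at fixed $h$ with a contour isolating the three low-lying eigenvalues, exact persistence of the eigenvalue $0$ from $\SB e^{-f/h}=0$, and the first-order splitting $\pm i\gamma\varepsilon$ of the semisimple double eigenvalue $\lambda_{2}$ via Lemma \ref{a22}. The only difference is cosmetic: the paper cites Kato's reduction process for the analyticity of the branches where you carry out the $2\times 2$ discriminant computation by hand, which is equivalent.
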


In this statement, the notation $\CO_{h} ( 1 )$ designs a function bounded by a constant which may depend on $h$, and the eigenvalues are counted with multiplicity. The constant $\gamma ( h ) \in \R \setminus \{ 0 \}$ is whose of Lemma \ref{a22}.

\begin{proof}[Proof of Proposition \ref{a26}]
Since $B$ is a relatively compact perturbation of $P_{0}$ from Lemma \ref{a2}, the operator $P_{\rm com}$ is well-defined and closed on the domain of $P_{0}$ (see Theorem IV.1.11 of \cite{Ka76_01}). Moreover, $\varepsilon \longmapsto P_{0} + \varepsilon \SB$ is a holomorphic family of unbounded operators in the sense of Section VII of \cite{Ka76_01}. Recall that $\lambda_{1} = 0$ is a simple eigenvalue of $P_{0}$. From Lemma \ref{a1}, $\lambda_{2}$ is a double eigenvalue which is semisimple since $P_{0}$ is self-adjoint. On the other hand, Lemma \ref{a22} shows that the eigenvalues of $\Pi_{2 3} \SB \Pi_{2 3}$ are $\pm i \gamma$. Since $\gamma \neq 0$, these eigenvalues are different. Then, by the perturbation theory of spectrum, more precisely the perturbation theory of finite systems of eigenvalues (see Section VII.1.3 of \cite{Ka76_01}) and the reduction process for semisimple eigenvalues (see Section II.2.3 of \cite{Ka76_01}), there exist analytic functions $\varepsilon \longmapsto \lambda_{1} ( \varepsilon , h ) , \lambda_{2} ( \varepsilon , h ) , \lambda_{3} ( \varepsilon , h )$ defined for $\varepsilon \in [ - \varepsilon_{0} , \varepsilon_{0} ]$ with $\varepsilon_{0} ( h ) > 0$ such that
\begin{equation*}
\left\{ \begin{aligned}
&\lambda_{1} ( 0 , h ) = 0 , \\
&\lambda_{2} ( \varepsilon , h ) = \lambda_{2} ( h ) + i \gamma ( h ) \varepsilon + \CO_{h} ( \varepsilon^{2} ) ,  \\
&\lambda_{3} ( \varepsilon , h ) = \lambda_{2} ( h ) - i \gamma ( h ) \varepsilon + \CO_{h} ( \varepsilon^{2} ) ,  \\
\end{aligned} \right.
\end{equation*}
and $\sigma ( P_{\rm com} ) \cap \{ z \in \C ; \ \re z < h \lambda_{*} / 2 \} = \{ \lambda_{1} , \lambda_{2} , \lambda_{3} \}$. Since $0$ is always an eigenvalue of $P_{\rm com}$ by Lemma \ref{a2}, we have $\lambda_{1} ( \varepsilon , h ) = 0$ for all $\varepsilon \in [ - \varepsilon_{0} , \varepsilon_{0} ]$ after a possible shrinking of $\varepsilon_{0}$.
\end{proof}

The asymptotic expansions in Proposition \ref{a26} and $\gamma ( h ) \in \R \setminus \{ 0 \}$ yield that
\begin{equation} \label{a27}
\im \lambda_{2} ( \varepsilon , h ) \neq 0 \qquad \text{and} \qquad \im \lambda_{3} ( \varepsilon , h ) \neq 0 ,
\end{equation}
for all $\varepsilon \in [ - \varepsilon_{1} , \varepsilon_{1} ] \setminus \{ 0 \}$ with $\varepsilon_{1} ( h ) > 0$ small enough. We eventually choose
\begin{equation} \label{a28}
\varepsilon ( h ) = \min \big( \varepsilon_{0} ( h ) , \varepsilon_{1} ( h ) , r ( h ) \big) .
\end{equation}
Thus, $b ( x , h )$ is a symbol of order at most $r ( h )$ from Lemma \ref{a2} and \eqref{a25}. In the domain $\{ z \in \C ; \ \re z < \lambda_{*} h / 2 \}$, $P_{\rm com}$ has three eigenvalues $\mu_{1} ( h ) = 0$, $\mu_{2} ( h ) = \lambda_{2} ( \varepsilon ( h ) , h )$ and $\mu_{3} ( h ) = \lambda_{3} ( \varepsilon ( h ) , h )$ with $\im \mu_{2} \neq 0$ and $\im \mu_{3} \neq 0$. From \eqref{a82}, we automatically have $\mu_{3} = \overline{\mu_{2}}$. Finally, we can write
\begin{equation*}
P_{\rm com} - z = ( B ( P_{0} - z )^{- 1} - 1 ) ( P_{0} - z ) ,
\end{equation*}
for $z \in B ( 0 , 1 ) \setminus \sigma ( P_{0} )$ with
\begin{equation*}
B ( P_{0} - z )^{- 1} = B ( P_{0} + i )^{- 1} \big( 1 + ( z + i ) ( P_{0} - z )^{- 1} \big) = \CO \big( r \dist ( z , \sigma ( P_{0} ) )^{- 1} \big) ,
\end{equation*}
from Lemma \ref{a2}. By a classical argument, it implies that $\mu_{2} = \lambda_{2} + \CO ( r )$ and finishes the proof of Theorem \ref{a20}.

\section{Proof of Theorem \ref{a70}} \label{s5}

To find a setting with a Jordan block, we consider operators of the form 
\begin{equation} \label{a74}
P_{\tau} = P_{0} + \tau_{1} P_{1} + \tau_{2} P_{2} + \tau_{3} P_{3} + \tau_{4} \SB ,
\end{equation}
where $\tau = ( \tau_{1} , \tau_{2} , \tau_{3} , \tau_{4} ) \in \R^{4}$, $\SB$ has been constructed in Section \ref{s3} and $P_{\nu}$ is as follows. For $\nu = 1 , 2 , 3$, let $\chi_{\nu} \in C^{\infty}_{0} ( \R^{2} ; \R )$ be supported near $s_{\nu}$ and equal to $1$ in a neighborhood of $s_{\nu}$. We also assume that $\chi_{2} = \chi_{1} \circ R$ and $\chi_{3} = \chi_{1} \circ R^{2}$ (see Figure \ref{f1}). Then, $P_{\nu}$ is defined by
\begin{equation} \label{a48}
P_{\nu} = d_{f}^{*} \circ \chi_{\nu} \circ d_{f} .
\end{equation}
Summing up, $P_{\tau}$ can be written
\begin{equation*}
P_{\tau} = d_{f}^{*} \circ \chi \circ d_{f} + \tau_{4} \SB ,
\end{equation*}
with $\chi = 1 + \tau_{1} \chi_{1} + \tau_{2} \chi_{2} + \tau_{3} \chi_{3}$. Thus, for $\tau$ small enough, $P_{\tau}$ enters into the setting of \eqref{a80}.

For $j = 1 , 2 , 3$, let $\phi_{j} ( x )$ denote the global quasimode of $P_{0}$ supported near the connected component of $\{ f < f ( s ) \}$ containing $m_{j}$ constructed in Section 4 of \cite{BoLeMi22_01} (see also \cite{LePMi20}). More precisely, this real-valued function can be written
\begin{equation} \label{a40}
\phi_{j} ( x ) = \theta_{j} ( x ) \big( v_{j} ( x ) + 1 \big) e^{- f ( x ) / h} = \widetilde{v}_{j} ( x ) e^{- f ( x ) / h} ,
\end{equation}
where $\theta_{j} \in C^{\infty}_{0} ( \R^{2} )$ is a plateau function near the connected component of $\{ f < f ( s ) \}$ containing $m_{j}$ and $v_{j} \in C^{\infty} ( \R^{2} )$ is given near the support of $\theta_{j}$ by
\begin{equation} \label{a41}
v_{j} ( x ) = \left\{ \begin{aligned}
&C_{0}^{- 1} \int_{0}^{\ell_{s}^{j} ( x , h )} \zeta ( r ) e^{- r^{2} / 2 h} d r \qquad  &&\begin{aligned}
&\text{near } s, \text{ one of the two} \\
&\text{saddle points close to } m_{j}, \end{aligned}  \\
&1 &&\begin{aligned}\text{outside,}\end{aligned}
\end{aligned} \right.
\end{equation}
Here, we say that ``a saddle point $s$ is close to a minimum $m$'' iff $s$ is in the closure of the connected component of $\{ f < f ( s ) \}$ containing $m$ (see Figure \ref{f1}). The function $\zeta \in C^{\infty}_{0} ( \R ; [ 0 , 1 ] )$ is even and satisfies $\zeta ( r ) = 1$ for $r$ near $0$,
\begin{equation*}
C_{0}  = \int_{0}^{+ \infty} \zeta ( r ) e^{- r^{2} / 2 h} d r = \sqrt{\frac{\pi h}{2}} \big( 1 + \CO ( e^{- \delta / h } ) \big) .
\end{equation*}
The function $\ell_{s}^{j} ( x , h ) \simeq \ell_{s , 0}^{j} ( x ) + \ell_{s , 1}^{j} ( x ) h + \cdots$ is smooth with $\ell_{s , 0}^{j} ( s ) = 0$ and $\nabla \ell_{s , 0}^{j} ( s ) \neq 0$. As in \eqref{a35}, we can make these constructions so that
\begin{equation} \label{a61}
\phi_{2} = \phi_{1} \circ R \qquad \text{and} \qquad \phi_{3} = \phi_{1} \circ R^{2} .
\end{equation}
We choose $\chi_{\nu}$ in \eqref{a48} such that $\chi_{\nu} = 1$ near the support of $\theta_{j} \nabla v_{j}$ if $s_{\nu}$ is close to $m_{j}$. By comparison with Section \ref{s2}, we have $\phi_{j} = 2 \psi_{j} + \CO ( e^{- \delta / h} )$ for some $\delta > 0$, but $\phi_{j}$ is a better quasimode than $\psi_{j}$ (see Lemma \ref{a42} below).

We define the geometric quantities $S = f ( s ) - f ( m )$ and
\begin{equation*}
C_{1} = \frac{2 \vert \mu ( s ) \vert}{\vert \det \hess f ( s ) \vert^{1 / 2}} ,
\end{equation*}
where $\mu ( s )$ is given above Proposition \ref{a30}. The quasimodes $\phi_{j}$'s satisfy

\begin{lemma}\sl \label{a42}
For all $\nu , j , k \in \{ 1 , 2 , 3 \}$, we have
\begin{gather}
\< P_{\nu} \phi_{j} , \phi_{k} \> \sim \left\{ \begin{aligned}
&C_{1} h^{2} e^{- 2 S / h} &&\text{if } s_{\nu} \text{ is close to } m_{j} = m_{k} ,  \\
&- C_{1} h^{2} e^{- 2 S / h} \qquad &&\text{if } s_{\nu} \text{ is close to } m_{j} \neq m_{k} ,  \\
&0 &&\text{otherwise,}
\end{aligned} \right.   \label{a45} \\
\Vert P_{0} \phi_{j} \Vert^{2} = \CO ( h^{\infty} ) e^{- 2 S / h} \qquad \text{and} \qquad \Vert P_{\nu} \phi_{j} \Vert^{2} = \CO ( h^{\infty} ) e^{- 2 S / h} .  \label{a46}
\end{gather}
\end{lemma}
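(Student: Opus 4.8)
\textbf{Proof proposal for Lemma \ref{a42}.} The plan is to reduce everything to computations on the model quasimodes $\phi_j = \widetilde v_j e^{-f/h}$ and then to Laplace-type integrals localized near the saddle points $s_\nu$. The starting point is the conjugation identity: writing $\phi_j = \widetilde v_j e^{-f/h}$ and using $d_f(e^{-f/h}\, w) = e^{-f/h}\, h\nabla w$, one gets $d_f \phi_j = e^{-f/h}\, h\nabla \widetilde v_j$, hence
\begin{equation*}
\< P_\nu \phi_j , \phi_k \> = \int_{\R^2} \chi_\nu \, e^{-2f/h}\, (h\nabla \widetilde v_j) \cdot (h\nabla \widetilde v_k)\, dx .
\end{equation*}
Now I would exploit the structure of $\widetilde v_j = \theta_j(v_j+1)$ from \eqref{a40}–\eqref{a41}: away from the saddle points $v_j$ is locally constant (equal to $1$ or to a constant), so $\nabla \widetilde v_j$ is supported near the two saddle points close to $m_j$, up to the cutoff $\theta_j$ whose own gradient lives in a region where $e^{-2f/h}$ is exponentially smaller than $e^{-2S/h}$. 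Since $\chi_\nu$ was chosen to equal $1$ on the support of $\theta_j\nabla v_j$ whenever $s_\nu$ is close to $m_j$, the integrand reduces, modulo $\CO(e^{-\delta/h})e^{-2S/h}$ errors, to a sum over saddle points $s$ of $\int \chi_\nu\, e^{-2f/h}\, (h\nabla v_j)\cdot(h\nabla v_k)$ near $s$.

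The second step is the local Laplace analysis near a fixed saddle point $s$. From \eqref{a41}, near $s$ we have $\nabla v_j = C_0^{-1}\zeta(\ell_s^j)e^{-(\ell_s^j)^2/2h}\nabla \ell_s^j$, and $\ell_s^j$ is a Morse coordinate transverse to the separatrix with $\ell_{s,0}^j(s)=0$, $\nabla\ell_{s,0}^j(s)\neq 0$. Plugging in, the local contribution is
\begin{equation*}
C_0^{-2} \int \chi_\nu \, e^{-2f/h}\, \zeta(\ell_s^j)\zeta(\ell_s^k)\, e^{-((\ell_s^j)^2+(\ell_s^k)^2)/2h}\, h^2 (\nabla\ell_s^j\cdot\nabla\ell_s^k)\, dx .
\end{equation*}
Using $f(x) = f(s) + \tfrac12\hess f(s)(x-s)\cdot(x-s) + \CO(|x-s|^3)$ and the fact that $\mu(s)<0$ is the unique negative eigenvalue, one does the Gaussian integral: the $e^{-2f/h}$ factor contributes $e^{-2f(s)/h}$ and a Gaussian in the stable directions, while in the unstable direction the combination $e^{-2f/h}\cdot e^{-(\ell^j)^2/h}$ (when $j=k$) produces a convergent Gaussian whose curvature involves $|\mu(s)|$; stationary phase/Laplace then yields the prefactor $C_1 h^2 e^{-2S/h}$ with $C_1 = 2|\mu(s)|/|\det\hess f(s)|^{1/2}$ after accounting for $C_0 \sim \sqrt{\pi h/2}$. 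The sign is dictated by whether the two quasimodes ``cross the separatrix in the same orientation'': if $m_j = m_k$ then $\nabla v_j = \nabla v_k$ near $s$ and the integrand is $\geq 0$, giving $+C_1 h^2 e^{-2S/h}$; if $m_j\neq m_k$ but both are close to $s$, then $v_j$ and $v_k$ increase on opposite sides of the separatrix, so $\nabla\ell_s^j = -\nabla\ell_s^k$ there, flipping the sign to $-C_1 h^2 e^{-2S/h}$; and if $s_\nu$ is not close to $m_j$, the whole integrand is exponentially negligible, giving $0$. This is exactly \eqref{a45}. The rotational symmetry \eqref{a61} together with the choice $\chi_2 = \chi_1\circ R$, $\chi_3 = \chi_1\circ R^2$ guarantees the answer does not depend on which saddle/minimum is used, so it suffices to compute one case.

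For the remainder estimates \eqref{a46}, I would invoke that $\phi_j$ is a good WKB quasimode: by the construction in Section 4 of \cite{BoLeMi22_01}, $P_0\phi_j = \CO(h^\infty)e^{-S/h}$ in a suitable weighted norm, more precisely $\|P_0\phi_j\|^2 = \CO(h^\infty)e^{-2S/h}$, which is the first half of \eqref{a46}. For $\|P_\nu\phi_j\|^2$, write $P_\nu\phi_j = d_f^*(\chi_\nu\, e^{-f/h}\, h\nabla\widetilde v_j)$ and note $\chi_\nu\, h\nabla\widetilde v_j$ is, up to exponentially small $\theta_j$-gradient terms, equal to $h\nabla v_j$ localized near the saddle points close to $m_j$; there $\nabla v_j$ is a Gaussian of size $\CO(e^{-\delta/h})$ relative to the natural scale, so applying $d_f^*$ (which is $e^{-f/h}$-conjugate to $-h\div$) and integrating against $e^{-2f/h}$ picks up $e^{-2S/h}$ times something $\CO(h^\infty)$. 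The main obstacle I anticipate is bookkeeping the cutoffs: making rigorous that all the terms involving $\nabla\theta_j$, $\nabla\chi_\nu$, and the cross-region where $v_j$ interpolates but is not yet locally constant contribute only $\CO(e^{-\delta/h})e^{-2S/h}$ (or $\CO(h^\infty)e^{-2S/h}$), which requires comparing $f(x)$ to $f(s)$ on the supports of these gradients and checking $f > f(s)$ strictly there by the geometry of Figure \ref{f1}. Once that localization is in place, the Laplace-method computation of the leading constant and its sign is routine.
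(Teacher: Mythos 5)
Your treatment of \eqref{a45} is essentially the paper's own argument: reduce $\< P_{\nu} \phi_{j} , \phi_{k} \>$ to $\int \chi_{\nu} e^{- 2 f / h} h \nabla \widetilde{v}_{j} \cdot h \nabla \widetilde{v}_{k}$, discard the $\nabla \theta_{j}$ terms because $e^{- f / h} = \CO ( e^{- ( S + \delta ) / h} )$ on their support, and apply the Laplace method near $s_{\nu}$, the sign in the off-diagonal case coming from $\ell^{j}_{s_{\nu}} = - \ell^{k}_{s_{\nu}}$ (you should also invoke the parity of $\zeta$ to identify $\zeta ( \ell^{j}_{s_{\nu}} ) = \zeta ( \ell^{k}_{s_{\nu}} )$, but that is a detail). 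The first estimate of \eqref{a46} is, as you say, quoted from \cite{BoLeMi22_01}.

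The genuine gap is your justification of the second estimate of \eqref{a46}. The claim that ``$\nabla v_{j}$ is a Gaussian of size $\CO ( e^{- \delta / h} )$ relative to the natural scale'' is false where it matters: on the separatrix $\{ \ell^{j}_{s_{\nu}} = 0 \}$ through the saddle, the factor $e^{- ( \ell^{j}_{s_{\nu}} )^{2} / 2 h}$ equals $1$ and $\nabla v_{j} \sim C_{0}^{- 1} \nabla \ell^{j}_{s_{\nu}}$ is of size $h^{- 1 / 2}$, so pointwise size considerations only yield $\Vert P_{\nu} \phi_{j} \Vert^{2} = \CO ( h^{N_{0}} ) e^{- 2 S / h}$ for some fixed power $N_{0}$, not $\CO ( h^{\infty} ) e^{- 2 S / h}$. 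The $h^{\infty}$ gain is not a size estimate but a WKB cancellation: the full symbol $\ell^{j}_{s} \simeq \ell^{j}_{s , 0} + h \ell^{j}_{s , 1} + \cdots$ is built so that the eikonal and transport equations hold to all orders, which is exactly the statement $\theta_{j} P_{0} ( v_{j} e^{- f / h} ) = \CO ( h^{\infty} ) e^{- S / h}$ of \cite{BoLeMi22_01}. To transfer this bound from $P_{0}$ to $P_{\nu} = d_{f}^{*} \circ \chi_{\nu} \circ d_{f}$ one needs the additional observation that $\chi_{\nu}$ is constant (equal to $0$ or $1$) near each connected component of $\supp ( \theta_{j} \nabla v_{j} )$: writing $P_{\nu} \phi_{j} = [ P_{\nu} , \theta_{j} ] ( v_{j} + 1 ) e^{- f / h} + \theta_{j} P_{\nu} ( v_{j} e^{- f / h} )$, the commutator term is $\CO ( e^{- ( S + \delta ) / h} )$ in $L^{2}$, and the local constancy of $\chi_{\nu}$ gives $\theta_{j} d_{f}^{*} \chi_{\nu} e^{- f / h} h \nabla v_{j} = \chi_{\nu} \theta_{j} d_{f}^{*} e^{- f / h} h \nabla v_{j} = \chi_{\nu} \theta_{j} P_{0} ( v_{j} e^{- f / h} )$, to which the cited $\CO ( h^{\infty} ) e^{- S / h}$ bound applies. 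Without this reduction (or an equivalent re-derivation of the transport cancellation with the weight $\chi_{\nu}$ inserted), your proposal does not establish $\Vert P_{\nu} \phi_{j} \Vert^{2} = \CO ( h^{\infty} ) e^{- 2 S / h}$, which is precisely the estimate needed later (Lemma \ref{a60}).
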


Here, the notation ``$s_{\nu}$ is close to $m_{j} \neq m_{k}$'' means that $j \neq k$ and that $s_{\nu}$ is close to $m_{j}$ and $m_{k}$. Roughly speaking, it means that $s_{\nu}$ is between $m_{j}$ and $m_{k}$ (see Figure \ref{f1}).

\begin{proof}
This result is similar to Proposition 5.1 of \cite{BoLeMi22_01} (see also Section 4B of \cite{LePMi20}). We only explain here the ideas of the proof and the necessary changes, and we send the reader to \cite{BoLeMi22_01} for the details.

Combining \eqref{a48} and \eqref{a40} leads to
\begin{equation} \label{a52}
\big\< P_{\nu} \phi_{j} , \phi_{k} \big\> = \big\< \chi_{\nu} d_{f} \phi_{j} , d_{f} \phi_{k} \big\> = \big\< \chi_{\nu} e^{- f / h} h \nabla \widetilde{v}_{j} , e^{- f / h} h \nabla \widetilde{v}_{k} \big\> .
\end{equation}
Using $\nabla \widetilde{v}_{j} = ( v_{j} + 1 ) \nabla \theta_{j} + \theta_{j} \nabla v_{j}$ and $e^{- f / h} = \CO ( e^{- ( S + \delta ) / h} )$ on the support of $( v_{j} + 1 ) \nabla \theta_{j}$, the previous equation becomes
\begin{equation} \label{a50}
\big\< P_{\nu} \phi_{j} , \phi_{k} \big\> = h^{2} \int \chi_{\nu} \theta_{j} \theta_{k} \nabla v_{j} \cdot \nabla v_{k} e^{- 2 f / h} d x + \CO \big( e^{- 2 ( S + \delta ) / h} \big) .
\end{equation}
From \eqref{a41}, we have on the support of $\theta_{j}$
\begin{equation*}
\nabla v_{j} = \sum_{s \text{ close to } m_{j}} C_{0}^{- 1} \zeta ( \ell_{s}^{j} ) e^{- ( \ell_{s}^{j} )^{2} / 2 h} \nabla \ell_{s}^{j} .
\end{equation*}
If $s_{\nu}$ is close to $m_{j} = m_{k}$, \eqref{a50} writes
\begin{equation*}
\big\< P_{\nu} \phi_{j} , \phi_{k} \big\> = h^{2} C_{0}^{- 2} \int \theta_{j}^{2} \zeta ( \ell_{s_{\nu}}^{j} )^{2} \vert \nabla \ell_{s_{\nu}}^{j} \vert^{2} e^{- 2 \big( f + \frac{( \ell_{s_{\nu}}^{j} )^{2}}{2} \big) / h} d x + \CO \big( e^{- 2 ( S + \delta ) / h} \big) .
\end{equation*}
The asymptotic of such an integral has been obtained in Equation (5.4) of \cite{BoLeMi22_01} using the Laplace method. This computation gives
\begin{equation} \label{a51}
\big\< P_{\nu} \phi_{j} , \phi_{k} \big\> \sim C_{1} h^{2} e^{- 2 S / h} ,
\end{equation}
when $s_{\nu}$ is close to $m_{j} = m_{k}$. Assume now that $s_{\nu}$ is close to $m_{j}$ and $m_{k}$ with $m_{j} \neq m_{k}$. In that case, we have $\ell_{s_{\nu}}^{j} = - \ell_{s_{\nu}}^{k}$ (see the discussion below (4.6) of \cite{BoLeMi22_01}). Then, \eqref{a50} and the parity of $\zeta$ give
\begin{equation*}
\big\< P_{\nu} \phi_{j} , \phi_{k} \big\> = - h^{2} C_{0}^{- 2} \int \theta_{j} \theta_{k} \zeta ( \ell_{s_{\nu}}^{j} )^{2} \vert \nabla \ell_{s_{\nu}}^{j} \vert^{2} e^{- 2 \big( f + \frac{( \ell_{s_{\nu}}^{j} )^{2}}{2} \big) / h} d x + \CO \big( e^{- 2 ( S + \delta ) / h} \big) .
\end{equation*}
As before, the Laplace method implies
\begin{equation} \label{a54}
\big\< P_{\nu} \phi_{j} , \phi_{k} \big\> \sim - C_{1} h^{2} e^{- 2 S / h} ,
\end{equation}
when $s_{\nu}$ is close to $m_{j} \neq m_{k}$. Finally, if $s_{\nu}$ is not close to $m_{j}$ or $m_{k}$, we directly get from \eqref{a52} and the support properties of $\chi_{\nu}$, $\theta_{j}$ and $\theta_{k}$ that
\begin{equation} \label{a53}
\big\< P_{\nu} \phi_{j} , \phi_{k} \big\> = 0 ,
\end{equation}
in that case. Summing up, \eqref{a45} follows from \eqref{a51}, \eqref{a54} and \eqref{a53}.

It remains to show \eqref{a46}. The first estimate is a direct consequence of Proposition 5.1 $ii)$ and $iii)$ of \cite{BoLeMi22_01}. On the other hand, using \eqref{a40} and $P_{\nu} e^{- f / h } = 0$, we deduce
\begin{equation*}
P_{\nu} \phi_{j} = [ P_{\nu} , \theta_{j} ] ( v_{j} + 1 ) e^{- f / h} + \theta_{j} P_{\nu} \big( v_{j} e^{- f / h} \big) .
\end{equation*}
Since $e^{- f / h} = \CO ( e^{- ( S + \delta ) / h} )$ on the support of $(v_{j} + 1 ) \nabla \theta_{j} $, the first term is $\CO ( e^{- ( S + \delta ) / h} )$ in $L^{2}$ norm. Concerning the second term, we remark that $\chi_{\nu}$ is constant (equal to $0$ or $1$) near each connected component of the support of $\theta_{j} \nabla v_{j}$ if the support of $\theta_{j}$ has been chosen sufficiently close to the connected component of $\{ f < f ( s ) \}$ containing $m_{j}$. Then, \eqref{a48} and \eqref{a41} lead to
\begin{align*}
\theta_{j} P_{\nu} \big( v_{j} e^{- f / h} \big) &= \theta_{j} d_{f}^{*} \chi_{\nu} d_{f} v_{j} e^{- f / h} = \theta_{j} d_{f}^{*} \chi_{\nu} e^{- f / h} h \nabla v_{j} \\
&= \chi_{\nu} \theta_{j} d_{f}^{*} e^{- f / h} h \nabla v_{j} = \chi_{\nu} \theta_{j} P_{0} \big( v_{j} e^{- f / h} \big) .
\end{align*}
It is proved below (5.5) of \cite{BoLeMi22_01} that $\theta_{j} P_{0} ( v_{j} e^{- f / h} ) = \CO ( h^{\infty} ) e^{- S / h}$. Summing up, 
\begin{equation*}
P_{\nu} \phi_{j} = \CO ( h^{\infty} ) e^{- S / h} ,
\end{equation*}
and \eqref{a46} follows.
\end{proof}

We construct a basis of the $2$-dimensional spectral space of $P_{\tau}$ associated to the eigenvalues close to $\lambda_{2}$. We set
\begin{equation} \label{a47}
\left\{ \begin{aligned}
\widetilde{e}_{1} ( x ) &= \frac{1}{\sqrt{8} \Vert e^{- f / h} \Vert} \big( 2 \phi_{1} ( x ) - \phi_{2} ( x ) - \phi_{3} ( x ) \big) , \\
\widetilde{e}_{2} ( x ) &= \frac{\sqrt{3}}{\sqrt{8} \Vert e^{- f / h} \Vert} \big( \phi_{2} ( x ) - \phi_{3} ( x ) \big) ,
\end{aligned} \right.
\end{equation}
with $\Vert e^{- f / h} \Vert$ estimated in \eqref{a44}. The idea behind this choice of functions is that
\begin{equation} \label{a63}
\frac{1}{\sqrt{3}} \begin{pmatrix}
1 \\
1 \\
1
\end{pmatrix} , \quad
\frac{1}{\sqrt{6}} \begin{pmatrix}
2 \\
- 1 \\
- 1
\end{pmatrix} , \quad \frac{1}{\sqrt{2}} \begin{pmatrix}
0 \\
1 \\
- 1
\end{pmatrix} ,
\end{equation}
form an orthonormal basis of $\R^{3}$ (the first vector corresponding to $\Vert e^{- f / h} \Vert^{- 1} e^{- f / h}$). For $\nu \in \{ 1 , 2 , 3 \}$, let $\widetilde{\CP}_{\nu} \in M_{2 \times 2} ( \R )$ be the matrix of coefficients
\begin{equation*}
( \widetilde{\CP}_{\nu} )_{j , k} = C_{2}^{- 1} h^{- 1} e^{2 S / h} \< P_{\nu} \widetilde{e}_{j} , \widetilde{e}_{k} \> \qquad \text{with} \qquad C_{2} = \frac{\vert \mu ( s ) \vert \vert \det \hess f ( m ) \vert^{1 / 2}}{4 \pi \vert \det \hess f ( s ) \vert^{1 / 2}} .
\end{equation*}
The asymptotic of these matrices are provided by the next result.

\begin{lemma}\sl \label{a55}
For all $j , k \in \{ 1 , 2 \}$, we have
\begin{equation} \label{a43}
\< \widetilde{e}_{j} , \widetilde{e}_{k} \> = \delta_{j , k} + \CO ( e^{- \delta / h } ) .
\end{equation}
Moreover, the matrices $\widetilde{\CP}_{\nu}$ satisfy modulo $o ( 1 )$ terms
\begin{equation} \label{a56}
\widetilde{\CP}_{1} = \begin{pmatrix}
3 & - \sqrt{3} \\
- \sqrt{3} & 1 \end{pmatrix} , \qquad
\widetilde{\CP}_{2} = \begin{pmatrix}
0 & 0 \\
0 & 4 \end{pmatrix} \qquad \text{and} \qquad
\widetilde{\CP}_{3} = \begin{pmatrix}
3 & \sqrt{3} \\
\sqrt{3} & 1 \end{pmatrix} .
\end{equation}
\end{lemma}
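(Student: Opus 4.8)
The plan is to carry out everything in the almost-orthonormal basis $(\phi_1,\phi_2,\phi_3)$, feeding in Lemma~\ref{a42} for the matrix elements $\<P_\nu\phi_a,\phi_b\>$ together with the incidence combinatorics read off from Figure~\ref{f1}. Write $\widetilde e_1=\sum_a(c^{(1)})_a\phi_a$ and $\widetilde e_2=\sum_a(c^{(2)})_a\phi_a$, so that by \eqref{a47} one has $c^{(1)}=\tfrac{1}{\sqrt8\,\Vert e^{-f/h}\Vert}(2,-1,-1)$ and $c^{(2)}=\tfrac{\sqrt3}{\sqrt8\,\Vert e^{-f/h}\Vert}(0,1,-1)$; both coefficient vectors are orthogonal to $(1,1,1)$, which is exactly the point of the orthonormal family \eqref{a63} (the first vector there, corresponding to $\Vert e^{-f/h}\Vert^{-1}e^{-f/h}$, matches modulo $\CO(e^{-\delta/h})$ the direction $\phi_1+\phi_2+\phi_3$). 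For \eqref{a43}, note that $\phi_2=\phi_1\circ R$ and $\phi_3=\phi_1\circ R^2$ with $R$ unitary, so the Gram matrix $G=(\<\phi_j,\phi_k\>)_{j,k}$ is real symmetric and circulant, hence of the form $G=(a-b)\,\mathrm{Id}+b\,J$ with $J$ the all-ones matrix. Since $\phi_j=\widetilde v_je^{-f/h}$ with $\widetilde v_j\equiv 2$ on a fixed neighbourhood of $m_j$ while the mass of $e^{-2f/h}$ concentrates there, a localization argument together with \eqref{a44} gives $a=\Vert\phi_j\Vert^2=4\Vert\chi e^{-f/h}\Vert^2(1+\CO(e^{-\delta/h}))=\tfrac43\Vert e^{-f/h}\Vert^2(1+\CO(e^{-\delta/h}))$, using $\Vert e^{-f/h}\Vert^2=3\int_{\text{near }m}e^{-2f/h}\,(1+\CO(e^{-\delta/h}))$ by the rotation invariance of $f$; and for $j\neq k$ the supports of $\phi_j,\phi_k$ meet only near the saddle separating $m_j$ from $m_k$, where $e^{-f/h}$ is exponentially smaller than its bulk size, so $b=\CO(e^{-\delta/h})\Vert e^{-f/h}\Vert^2$. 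As $Jc^{(j)}=0$ this gives $\<\widetilde e_j,\widetilde e_k\>=(a-b)\,c^{(j)}\!\cdot c^{(k)}$, and the elementary identity $c^{(j)}\!\cdot c^{(k)}=\tfrac{3}{4\Vert e^{-f/h}\Vert^2}\delta_{j,k}$ yields \eqref{a43}.

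For \eqref{a56} I would expand $\<P_\nu\widetilde e_j,\widetilde e_k\>=\sum_{a,b}(c^{(j)})_a(c^{(k)})_b\<P_\nu\phi_a,\phi_b\>$. Reading off Figure~\ref{f1} that $s_1$ lies between $m_1$ and $m_2$, $s_2$ between $m_2$ and $m_3$, and $s_3$ between $m_1$ and $m_3$, Lemma~\ref{a42} says that, entrywise, $\<P_\nu\phi_a,\phi_b\>=C_1h^2e^{-2S/h}(v_\nu)_a(v_\nu)_b+o(h^2e^{-2S/h})$ with $v_1=(1,-1,0)$, $v_2=(0,1,-1)$, $v_3=(1,0,-1)$ the signed incidence vectors of the triangle $m_1m_2m_3$ (the $+1$'s being the ``$s_\nu$ close to $m_j=m_k$'' case, the $-1$'s the ``$s_\nu$ close to $m_j\neq m_k$'' case, all remaining entries vanishing identically by the support considerations of Lemma~\ref{a42}). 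Hence $\<P_\nu\widetilde e_j,\widetilde e_k\>=C_1h^2e^{-2S/h}(c^{(j)}\!\cdot v_\nu)(c^{(k)}\!\cdot v_\nu)+o(h^2e^{-2S/h})\Vert c^{(j)}\Vert\,\Vert c^{(k)}\Vert$, and since $\Vert c^{(j)}\Vert=\CO(h^{-1/2})$ the remainder contributes only $o(1)$ to $\widetilde{\CP}_\nu$.

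The main term is then reduced by the elementary limit $\frac{C_1h}{C_2\Vert e^{-f/h}\Vert^2}\to 8/3$, obtained by inserting $\Vert e^{-f/h}\Vert^2\sim 3\pi h(\det\hess f(m))^{-1/2}$ and the definitions of $C_1,C_2$. Writing $(c^{(j)}\!\cdot v_\nu)(c^{(k)}\!\cdot v_\nu)=\tfrac{1}{8\Vert e^{-f/h}\Vert^2}\,n^\nu_{j,k}$ with $n^\nu_{j,k}$ explicit — from $c^{(1)}\!\cdot v_1=\tfrac{3}{\sqrt8\Vert e^{-f/h}\Vert}$, $c^{(2)}\!\cdot v_1=\tfrac{-\sqrt3}{\sqrt8\Vert e^{-f/h}\Vert}$, $c^{(1)}\!\cdot v_2=0$, $c^{(2)}\!\cdot v_2=\tfrac{2\sqrt3}{\sqrt8\Vert e^{-f/h}\Vert}$, and the analogous values for $v_3$ — one gets $(\widetilde{\CP}_\nu)_{j,k}\to\tfrac13 n^\nu_{j,k}$, i.e. $n^1_{1,1}=9$, $n^1_{1,2}=-3\sqrt3$, $n^1_{2,2}=3$, $n^2_{1,1}=n^2_{1,2}=0$, $n^2_{2,2}=12$, $n^3_{1,1}=9$, $n^3_{1,2}=3\sqrt3$, $n^3_{2,2}=3$, which is exactly \eqref{a56}. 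Note that $\widetilde{\CP}_2$ is automatically diagonal because $c^{(1)}\perp v_2$, and $\widetilde{\CP}_3$ follows from $\widetilde{\CP}_1$ via the permutation $2\leftrightarrow 3$, which fixes $c^{(1)}$, sends $v_1$ to $v_3$, and negates $c^{(2)}$.

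The only step that is not pure bookkeeping is the first one: \eqref{a44} is stated only to leading order, so controlling the ratios $\Vert\phi_j\Vert^2/\Vert e^{-f/h}\Vert^2$ and $\<\phi_j,\phi_k\>/\Vert e^{-f/h}\Vert^2$ with exponentially small relative error requires re-running the Laplace/localization estimates. Once this is in place, \eqref{a43} and \eqref{a56} are a finite linear-algebra computation sitting on top of Lemma~\ref{a42}, the rotation symmetry, and the single constant $8/3$ above.
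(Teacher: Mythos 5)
Your proposal is correct and follows essentially the same route as the paper: expand $\widetilde{e}_{1},\widetilde{e}_{2}$ in the quasimode basis $(\phi_{1},\phi_{2},\phi_{3})$, get \eqref{a43} from the (rotation-symmetric) Gram structure of the $\phi_{j}$'s and \eqref{a44}, and get \eqref{a56} by feeding \eqref{a45} into the coefficient vectors of \eqref{a47} and chasing the constants $C_{1},C_{2},\Vert e^{-f/h}\Vert^{2}$. Your incidence-vector/rank-one packaging of \eqref{a45} and your explicit localization argument for the exponentially small errors in \eqref{a43} are just a tidier bookkeeping of the same computation the paper carries out (and your reading of the saddle--minimum adjacencies matches the paper's sample evaluation of $(\widetilde{\CP}_{3})_{1,2}$).
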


\begin{proof}
From \eqref{a40}, we have $\phi_{\nu} = 2 e^{- f / h}$ near $m_{\nu}$ and $\phi_{\nu} = \CO ( e^{- \delta / h } )$ outside. It implies
\begin{equation*}
\Vert e^{- f / h} \Vert^{- 2} \< \phi_{j} , \phi_{k} \> = \frac{4}{3} \delta_{j , k} + \CO ( e^{- \delta / h } ) ,
\end{equation*}
thanks to \eqref{a44}. Combining this relation with \eqref{a47}, we get \eqref{a43}.

To show \eqref{a56}, it is enough to combine \eqref{a45} and \eqref{a47}. For instance
\begin{align*}
\< P_{3} \widetilde{e}_{1} , \widetilde{e}_{2} \> &= \frac{\sqrt{3}}{8 \Vert e^{- f / h} \Vert^{2}} \big\< P_{3} ( 2 \phi_{1} - \phi_{2} - \phi_{3} ) , ( \phi_{2} - \phi_{3} ) \big\>  \\
&= \frac{\sqrt{3}}{8 \Vert e^{- f / h} \Vert^{2}} \big( 2 \< P_{3} \phi_{1} , \phi_{2} \> - 2 \< P_{3} \phi_{1} , \phi_{3} \> - \< P_{3} \phi_{2} , \phi_{2} \>  \\
&\qquad \qquad \qquad \qquad \qquad \quad + \< P_{3} \phi_{2} , \phi_{3} \> - \< P_{3} \phi_{3} , \phi_{2} \> + \< P_{3} \phi_{3} , \phi_{3} \> \big)    \\
&= \frac{\sqrt{3} C_{1} h^{2} e^{- 2 S / h}}{8 \Vert e^{- f / h} \Vert^{2}} ( 0 + 2 + 0 + 0 + 0 + 1 ) + o \big( h e^{- 2 S / h} \big)  \\
&= \sqrt{3} C_{2} h e^{- 2 S / h} + o \big( h e^{- 2 S / h} \big) ,
\end{align*}
thanks to \eqref{a44} and
\begin{equation*}
C_{2} = \frac{C_{1} \vert \det \hess f ( m ) \vert^{1 / 2}}{8 \pi} .
\end{equation*}
This provides the desired asymptotic of $( \widetilde{\CP}_{3} )_{1 , 2}$. The other coefficients can be computed the same way.
\end{proof}

We now apply the perturbation theory for $h$ fixed small enough. For $\tau$ small enough (depending on $h$), $P_{\tau}$ has $3$ small eigenvalues counted with multiplicity: $\lambda_{1}^{\tau} = 0$ associated to the eigenspace $e^{f / h} \C$ and $\lambda_{2} ^{\tau} , \lambda_{3}^{\tau}$ such that $\lambda_{j}^{\tau} \to \lambda_{2}$ as $\tau \to 0$ for $j = 2 , 3$. Mimicking Section \ref{s2}, we introduce the spectral projectors
\begin{equation*}
\Pi^{\tau} = \one_{\{ \lambda_{1} , \lambda_{2} , \lambda_{3} \}} ( P_{\tau} ), \qquad \Pi_{1}^{\tau} = \one_{\{ \lambda_{1} \}} ( P_{\tau} ) \qquad \text{and} \qquad \Pi_{2 3}^{\tau} = \one_{\{ \lambda_{2} , \lambda_{3} \}} ( P_{\tau} ) ,
\end{equation*}
which satisfy $\Pi^{\tau} = \Pi_{1}^{\tau} + \Pi_{2 3}^{\tau}$, $\Pi_{1}^{\tau} = \Pi_{1}$ and $\overline{\Pi_{\bullet}^{\tau} u} = \Pi_{\bullet}^{\tau} \overline{u}$ as in \eqref{a32}. Moreover, $\tau \to \Pi_{\bullet}^{\tau}$ is analytic in a real neighborhood of $0$ which may depend on $h$. We define
\begin{equation*}
\widehat{e}_{1}^{\tau} = \Pi_{2 3}^{\tau} \widetilde{e}_{1} \qquad \text{and} \qquad \widehat{e}_{2}^{\tau} = \Pi_{2 3}^{\tau} \widetilde{e}_{2} .
\end{equation*}
for $j = 1 , 2$. These real-valued functions satisfy

\begin{lemma}\sl \label{a60}
For $j = 1 , 2$ and $\tau$ small enough (depending on $h$), we have
\begin{equation*}
\widehat{e}_{j}^{\tau} = \widetilde{e}_{j} + \CO ( h^{\infty} ) e^{- S / h} ,
\end{equation*}
in $H^{2} ( \R^{2} )$.
\end{lemma}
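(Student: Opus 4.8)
\emph{Plan of proof.} The idea is that $\Pi_{2 3}^{\tau}$ moves $\widetilde{e}_{j}$ only by $\CO ( h^{\infty} ) e^{- S / h}$, which we extract from the exact relation $\widetilde{e}_{j} \perp e^{- f / h}$ together with a Riesz projector estimate. First, since $P_{\nu} e^{- f / h} = 0$ for $\nu = 1 , 2 , 3$ and $\SB e^{- f / h} = 0$, the adjoint $P_{\tau}^{*} = P_{0} + \sum_{\nu} \tau_{\nu} P_{\nu} - \tau_{4} \SB$ also kills $e^{- f / h}$, so, as already observed, the spectral projector of $P_{\tau}$ at $0$ is $\Pi_{1}^{\tau} = \Pi_{1} = \Vert e^{- f / h} \Vert^{- 2} e^{- f / h} \langle e^{- f / h} , \cdot \rangle$. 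By \eqref{a61} and $f \circ R = f$, the change of variables $x \mapsto R x$ gives $\langle e^{- f / h} , \phi_{1} \rangle = \langle e^{- f / h} , \phi_{2} \rangle = \langle e^{- f / h} , \phi_{3} \rangle$, so \eqref{a47} yields $\langle e^{- f / h} , \widetilde{e}_{j} \rangle = 0$, hence $\Pi_{1} \widetilde{e}_{j} = 0$. Since $\Pi_{2 3}^{\tau} = \Pi^{\tau} - \Pi_{1}^{\tau} = \Pi^{\tau} - \Pi_{1}$, this reduces the lemma to the estimate $\Vert ( \Pi^{\tau} - 1 ) \widetilde{e}_{j} \Vert_{H^{2}} = \CO ( h^{\infty} ) e^{- S / h}$.

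To prove it, I would take for $\gamma$ the circle of radius $\lambda_{*} h / 3$ centered at $0$, which for $\tau$ small encloses $\sigma ( P_{\tau} ) \cap \{ \re z < \lambda_{*} h / 2 \} = \{ 0 , \lambda_{2}^{\tau} , \lambda_{3}^{\tau} \}$ and nothing else of $\sigma ( P_{\tau} )$; then $\Pi^{\tau} = \frac{1}{2 i \pi} \oint_{\gamma} ( z - P_{\tau} )^{- 1} d z$, and, combining $\frac{1}{2 i \pi} \oint_{\gamma} z^{- 1} d z = 1$ with the resolvent identity,
\begin{equation*}
\widehat{e}_{j}^{\tau} - \widetilde{e}_{j} = ( \Pi^{\tau} - 1 ) \widetilde{e}_{j} = \frac{1}{2 i \pi} \oint_{\gamma} z^{- 1} ( z - P_{\tau} )^{- 1} P_{\tau} \widetilde{e}_{j} \, d z .
\end{equation*}
On $\gamma$ the only spectrum of the self-adjoint $P_{0}$ inside the disk is $\{ 0 , \lambda_{2} , \lambda_{3} \}$, all of size $\CO ( h e^{- 2 S / h} )$, the rest lying beyond $\lambda_{*} h$; hence $\dist ( z , \sigma ( P_{0} ) ) \geq \lambda_{*} h / 4$ for $h$ small, so $\Vert ( z - P_{0} )^{- 1} \Vert = \CO ( h^{- 1} )$ and $\Vert ( 1 + P_{0} ) ( z - P_{0} )^{- 1} \Vert = \CO ( h^{- 1} )$ uniformly in $z \in \gamma$. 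Since $\sum_{\nu} \tau_{\nu} P_{\nu} + \tau_{4} \SB$ is relatively $P_{0}$-bounded (Lemma \ref{a2}), a Neumann series argument shows that, for $\tau$ small enough depending on $h$, $( z - P_{\tau} )^{- 1}$ is defined on $\gamma$ and maps $L^{2} ( \R^{2} )$ into $\SD ( P_{0} ) = H^{2} \cap \langle x \rangle^{- 2} L^{2}$ with $\Vert ( 1 + P_{0} ) ( z - P_{\tau} )^{- 1} \Vert = \CO ( h^{- 1} )$ uniformly in $z \in \gamma$.

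The last input is $\Vert P_{\tau} \widetilde{e}_{j} \Vert = \CO ( h^{\infty} ) e^{- S / h}$. Writing $\widetilde{e}_{j}$ via \eqref{a47} as a $\CO ( h^{- 1 / 2} )$-combination of $\phi_{1} , \phi_{2} , \phi_{3}$, the contributions of $P_{0}$ and of the $P_{\nu}$'s are $\CO ( h^{\infty} ) e^{- S / h}$ by \eqref{a46}; the contribution of $\tau_{4} \SB$ is $\CO ( \vert \tau_{4} \vert ) e^{- S / h}$, by the same Laplace-method argument as in the proof of Lemma \ref{a42} applied to
\begin{equation*}
\SB \phi_{k} = e^{- f / h} \Big( h^{2} \big( \partial_{x_{2}} g \, \partial_{x_{1}} \widetilde{v}_{k} - \partial_{x_{1}} g \, \partial_{x_{2}} \widetilde{v}_{k} \big) + 2 h g \big( \partial_{x_{1}} f \, \partial_{x_{2}} \widetilde{v}_{k} - \partial_{x_{2}} f \, \partial_{x_{1}} \widetilde{v}_{k} \big) \Big) ,
\end{equation*}
which is supported near the saddle points and carries the Gaussian weight $e^{- ( f + \ell^{2} / 2 ) / h}$, where $f + \ell^{2} / 2$ attains its nondegenerate minimum $S$ at the saddles. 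For $\tau$ small enough depending on $h$ this is $\CO ( h^{\infty} ) e^{- S / h}$, and inserting the bounds into the contour integral (length $\CO ( h )$, $\vert z \vert^{- 1} = \CO ( h^{- 1} )$, resolvent $\CO ( h^{- 1} )$ from $L^{2}$ into $\SD ( P_{0} )$) gives $\Vert \widehat{e}_{j}^{\tau} - \widetilde{e}_{j} \Vert_{\SD ( P_{0} )} = \CO ( h^{\infty} ) e^{- S / h}$; since $\SD ( P_{0} )$ embeds continuously into $H^{2} ( \R^{2} )$, the lemma follows.

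The step demanding the most care is the uniform resolvent control along $\gamma$ for the non-self-adjoint $P_{\tau}$: one checks that $\gamma$ stays at distance of order $h$ from $\sigma ( P_{0} )$ and that the relative bound of $\sum_{\nu} \tau_{\nu} P_{\nu} + \tau_{4} \SB$ with respect to $P_{0}$ is $\CO ( \vert \tau \vert )$, so that the Neumann series converges for $\tau$ small; this is also why the statement only claims the bound for $\tau$ small enough depending on $h$, namely so that the $\SB$-term in $P_{\tau} \widetilde{e}_{j}$ stays negligible at the $\CO ( h^{\infty} ) e^{- S / h}$ scale. Everything else is bookkeeping of powers of $h$ and applications of Lemma \ref{a42}.
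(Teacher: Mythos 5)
Your argument is correct and follows essentially the paper's own route: the exact vanishing $\Pi_{1} \widetilde{e}_{j} = 0$ by rotation symmetry, a Cauchy-formula/resolvent estimate on a contour of radius of order $h$ exploiting the quasimode bounds \eqref{a46}, and absorption of all $\tau$-dependent contributions by taking $\tau$ small depending on $h$. The only (harmless) difference is organizational: you handle $\Pi^{\tau} - 1$ in a single contour integral with the non-self-adjoint resolvent $( z - P_{\tau} )^{- 1}$ controlled by a Neumann series, whereas the paper first compares $\Pi_{2 3}^{\tau}$ with $\Pi_{2 3}$ through a resolvent-difference identity (an $\CO_{h} ( \tau )$ term) and then removes $\Pi$ using only the self-adjoint resolvent of $P_{0}$.
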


\begin{proof}
Using the Cauchy formula, we can write
\begin{align*}
\widehat{e}_{j}^{\tau} &= \Pi_{2 3} \widetilde{e}_{j} - \frac{1}{2 i \pi} \oint_{\gamma} ( P_{\tau} - z )^{- 1} \, d z \,  \widetilde{e}_{j} + \frac{1}{2 i \pi} \oint_{\gamma} ( P_{0} - z )^{- 1} \, d z \,  \widetilde{e}_{j}  \\
&= \Pi_{2 3} \widetilde{e}_{j} + \frac{1}{2 i \pi} \oint_{\gamma} ( P_{\tau} - z )^{- 1} ( P_{\tau} - P_{0} ) ( P_{0} - z )^{- 1} \, d z \,  \widetilde{e}_{j} ,
\end{align*}
where $\gamma$ is a simple loop around $\lambda_{2}$, oriented counterclockwise, which depends on $h$ but not on $\tau$. This implies
\begin{equation} \label{a62}
\widehat{e}_{j}^{\tau} = \Pi_{2 3} \widetilde{e}_{j} + \CO_{h} ( \tau ) = \Pi_{2 3} \widetilde{e}_{j} + \CO ( h^{\infty} ) e^{- S / h} ,
\end{equation}
in $H^{2} ( \R^{2} )$ for $\tau$ small enough.

Since $\Pi_{2 3} = \Pi - \Pi_{1}$, \eqref{a62} gives
\begin{equation*}
\widehat{e}_{j}^{\tau} = \Pi \widetilde{e}_{j} - \Pi_{1} \widetilde{e}_{j} + \CO ( h^{\infty} ) e^{- S / h} .
\end{equation*}
Using \eqref{a61} and that $f$ is invariant under $R$, we get
\begin{align*}
\Pi_{1} \widetilde{e}_{1} &= e^{- f / h } \frac{1}{\sqrt{8} \Vert e^{- f / h} \Vert^{3}} \big\< e^{- f / h} , 2 \phi_{1} - \phi_{2} - \phi_{3} \big\>  \\
&= e^{- f / h } \frac{1}{\sqrt{8} \Vert e^{- f / h} \Vert^{3}} \big\< e^{- f / h} , 2 \phi_{1} - \phi_{1} R - \phi_{1} R^{2} \big\>  \\
&= e^{- f / h } \frac{1}{\sqrt{8} \Vert e^{- f / h} \Vert^{3}} \big\< e^{- f / h} , 2 \phi_{1} - \phi_{1} - \phi_{1} \big\>  \\
&= 0 .
\end{align*}
The same way, $\Pi_{1} \widetilde{e}_{2} = 0$. These relations follow in fact from \eqref{a63}. Summing up,
\begin{equation} \label{a64}
\widehat{e}_{j}^{\tau} = \Pi \widetilde{e}_{j} + \CO ( h^{\infty} ) e^{- S / h} ,
\end{equation}
in $H^{2} ( \R^{2} )$ for $\tau$ small enough.

Finally, we work as in \cite[Lemma 4.9]{LePMi20} to remove the projector $\Pi$ (see also \cite[(5.12)]{BoLeMi22_01}). The Cauchy formula gives
\begin{align*}
\Pi \widetilde{e}_{j} &= \widetilde{e}_{j} - \frac{1}{2 i \pi} \oint_{\partial B ( 0 , \lambda_{*} h / 2 )} ( P_{0} - z )^{- 1} \, d z \,  \widetilde{e}_{j} - \frac{1}{2 i \pi} \oint_{\partial B ( 0 , \lambda_{*} h / 2 )} z^{- 1} \, d z \,  \widetilde{e}_{j}  \\
&= \widetilde{e}_{j} - \frac{1}{2 i \pi} \oint_{\partial B ( 0 , \lambda_{*} h / 2 )} z ( P_{0} - z )^{- 1} \, d z \, P_{0} \widetilde{e}_{j} .
\end{align*}
Combining with \eqref{a46}, \eqref{a47} and \eqref{a64}, the lemma follows.
\end{proof}

From \eqref{a47}, \eqref{a63} and Lemma \ref{a60}, $( \widehat{e}_{1}^{\tau} , \widehat{e}_{2}^{\tau} )$ is almost orthonormal and then is a basis of $\im \Pi_{2 3}^{\tau}$. We orthonormalize $( \widehat{e}_{1}^{\tau} , \widehat{e}_{2}^{\tau} )$ into $( e_{1}^{\tau} , e_{2}^{\tau} )$ by the Gram--Schmidt process. It means
\begin{equation} \label{a65}
e_{1}^{\tau} = \Vert \widehat{e}_{1}^{\tau} \Vert^{- 1} \widehat{e}_{1}^{\tau} \qquad \text{and} \qquad e_{2}^{\tau} = \big\Vert \widehat{e}_{2}^{\tau} - \< e_{1}^{\tau} , \widehat{e}_{2}^{\tau} \> e_{1}^{\tau} \big\Vert^{- 1} \big( \widehat{e}_{2}^{\tau} - \< e_{1}^{\tau} , \widehat{e}_{2}^{\tau} \> e_{1}^{\tau} \big) .
\end{equation}
In particular, $( e_{1}^{\tau} , e_{2}^{\tau} )$ is a orthonormal basis of $\im \Pi_{2 3}^{\tau}$, $e_{j}^{\tau}$ is real-valued and $\tau \to e_{j}^{\tau}$ is analytic for $j = 1 , 2$ and $\tau$ near $0$. We now define the interaction matrix $Q ( \tau )$. More precisely, 
\begin{equation*}
\text{let } Q ( \tau ) \text{ be the matrix of the operator } \Pi_{2 3}^{\tau} P_{\tau} \Pi_{2 3}^{\tau} \text{ expressed in the basis } ( e_{1}^{\tau} , e_{2}^{\tau} ) .
\end{equation*}
More prosaically, it means that $Q_{j , k} ( \tau ) = \< e_{j}^{\tau} , P_{\tau} e_{k}^{\tau} \>$. From the previous discussion, $Q ( \tau )$ is well-defined for $\tau$ small,
\begin{equation} \label{a57}
Q \in C^{\infty} ( \R^{4} ; M_{2 \times 2}( \R ) ) ,
\end{equation}
and $Q ( 0 ) = \lambda_{2} Id$. Moreover, its partial derivates satisfy

\begin{lemma}\sl \label{a58}
We have
\begin{equation} \label{a68}
\partial_{\tau_{\nu}} Q ( 0 ) = C_{2} h e^{- 2 S / h} \big( \widetilde{\CP}_{\nu} + o ( 1 ) \big) ,
\end{equation}
for $\nu = 1 , 2 , 3$ and
\begin{equation} \label{a69}
\partial_{\tau_{4}} Q ( 0 ) = \begin{pmatrix}
0 & \gamma \\
- \gamma & 0
\end{pmatrix} ,
\end{equation}
where the $\widetilde{\CP}_{\nu}$'s are defined in Lemma \ref{a55} and $\gamma ( h ) \in \R \setminus \{ 0 \}$ is the constant given by Lemma \ref{a22} and associated to the basis $( e_{1}^{0} , e_{2}^{0} )$ of $\im \Pi_{2 3}$.
\end{lemma}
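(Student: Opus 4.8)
The plan is to differentiate the matrix entries $Q_{j,k}(\tau)=\langle e_j^{\tau},P_{\tau}e_k^{\tau}\rangle$ directly. Since $\tau\mapsto e_j^{\tau}$ is real-analytic with values in the domain of $P_0$ endowed with its graph norm (standard Kato theory for holomorphic families of type (A)) and $\tau\mapsto P_{\tau}$ is affine, the product rule gives at $\tau=0$
\begin{equation*}
\partial_{\tau_{\nu}}Q_{j,k}(0)=\langle\partial_{\tau_{\nu}}e_j^{0},P_0e_k^{0}\rangle+\langle e_j^{0},(\partial_{\tau_{\nu}}P_{\tau})_{|\tau=0}\,e_k^{0}\rangle+\langle e_j^{0},P_0\,\partial_{\tau_{\nu}}e_k^{0}\rangle .
\end{equation*}
The first step is to kill the two outer terms: the family $(e_1^{0},e_2^{0})$ is an orthonormal basis of $\im\Pi_{23}=\ker(P_0-\lambda_2)$, so $P_0e_k^{0}=\lambda_2e_k^{0}$, and since $P_0$ is self-adjoint the sum of the first and third terms equals $\lambda_2\,\partial_{\tau_{\nu}}\langle e_j^{\tau},e_k^{\tau}\rangle_{|\tau=0}=\lambda_2\,\partial_{\tau_{\nu}}\delta_{j,k}=0$. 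Hence $\partial_{\tau_{\nu}}Q_{j,k}(0)=\langle e_j^{0},(\partial_{\tau_{\nu}}P_{\tau})_{|\tau=0}\,e_k^{0}\rangle$, where $(\partial_{\tau_{\nu}}P_{\tau})_{|\tau=0}$ is $P_{\nu}$ for $\nu=1,2,3$ and $\SB$ for $\nu=4$. The case $\nu=4$ is then immediate: because $e_j^{0}\in\im\Pi_{23}$ one has $\langle e_j^{0},\SB e_k^{0}\rangle=\langle e_j^{0},\Pi_{23}\SB\Pi_{23}e_k^{0}\rangle$, so the right-hand side is the matrix of $\Pi_{23}\SB\Pi_{23}$ in the real orthonormal basis $(e_1^{0},e_2^{0})$, and Lemma \ref{a22} gives exactly \eqref{a69} with $\gamma(h)$ the constant attached to that basis.

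For $\nu\in\{1,2,3\}$ the aim is to trade $e_j^{0}$ for the explicit quasimodes $\widetilde e_j$ up to an error $o(h\,e^{-2S/h})$, after which Lemma \ref{a55} and the very definition of $\widetilde{\CP}_{\nu}$ conclude. The matrix $\big(\langle e_j^{0},P_{\nu}e_k^{0}\rangle\big)$ is the matrix of the compression $\Pi_{23}P_{\nu}\Pi_{23}$ in the basis $(e_1^{0},e_2^{0})$, while $(\widehat e_j^{0})_j=(\Pi_{23}\widetilde e_j)_j$ is another basis of $\im\Pi_{23}$; writing $T$ for the Gram--Schmidt change-of-basis matrix, $\big(\langle e_j^{0},P_{\nu}e_k^{0}\rangle\big)=T^{*}MT$ with $M_{lm}=\langle\widehat e_l^{0},P_{\nu}\widehat e_m^{0}\rangle$. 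By Lemma \ref{a60} and \eqref{a43} the family $(\widehat e_j^{0})_j$ is almost orthonormal, hence $T=Id+\CO(e^{-\delta/h})$. Next I would run a projector-removal step on $M$: writing $\widehat e_l^{0}=\widetilde e_l-q_l$ with $q_l=(1-\Pi_{23})\widetilde e_l$ and using $\Vert q_l\Vert_{H^2}=\CO(h^{\infty})e^{-S/h}$ (Lemma \ref{a60}), $\Vert P_{\nu}\widetilde e_l\Vert=\CO(h^{\infty})e^{-S/h}$ (from \eqref{a46} and \eqref{a47}), $\Vert P_{\nu}u\Vert\lesssim\Vert u\Vert_{H^2}$, and the identity $\langle q,P_{\nu}w\rangle=\langle P_{\nu}q,w\rangle$, every cross term is $\CO(h^{\infty})e^{-2S/h}$; thus $M=\big(\langle\widetilde e_l,P_{\nu}\widetilde e_m\rangle\big)+o(h\,e^{-2S/h})=C_2h\,e^{-2S/h}\,\widetilde{\CP}_{\nu}+o(h\,e^{-2S/h})$, and in particular all entries of $M$ are $\CO(h\,e^{-2S/h})$. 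Conjugating by $T=Id+\CO(e^{-\delta/h})$ perturbs $M$ by only $\CO(e^{-\delta/h})\,\CO(h\,e^{-2S/h})=o(h\,e^{-2S/h})$, which yields $\partial_{\tau_{\nu}}Q(0)=C_2h\,e^{-2S/h}\big(\widetilde{\CP}_{\nu}+o(1)\big)$, that is \eqref{a68}.

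I expect the delicate point to be precisely this size bookkeeping, and especially the order in which the approximations (replacing $e_j^{0}$ by $\widetilde e_j$, and $P_{\nu}$ by $\Pi_{23}P_{\nu}\Pi_{23}$) are performed. A naive bound $|\langle e_j^{0}-\widetilde e_j,P_{\nu}e_k^{0}\rangle|\le\Vert e_j^{0}-\widetilde e_j\Vert\,\Vert P_{\nu}e_k^{0}\Vert$ is too weak, since $\Vert P_{\nu}\Pi_{23}\Vert$ is only $\CO(h^{\infty})e^{-S/h}$, not $\CO(h\,e^{-2S/h})$: the one-sided compression of $P_{\nu}$ is genuinely larger than the two-sided one. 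One must therefore first pass to the finite $2\times2$ matrix $M$, read off from Lemma \ref{a55} that it has size $\CO(h\,e^{-2S/h})$, and only then absorb the near-identity Gram--Schmidt factor $T-Id=\CO(e^{-\delta/h})$ against that smaller scale; the self-adjointness identity together with the $H^2$-control of $(1-\Pi_{23})\widetilde e_l$ from Lemma \ref{a60} is what makes the cross terms collapse to $\CO(h^{\infty})e^{-2S/h}$.
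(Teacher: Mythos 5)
Your argument is correct and follows essentially the same route as the paper: the Kato-type differentiation (the outer terms vanish because $(P_{0}-\lambda_{2})e_{j}^{0}=0$ and $(e_{1}^{\tau},e_{2}^{\tau})$ stays orthonormal) reduces $\partial_{\tau_{\nu}}Q_{j,k}(0)$ to $\langle e_{j}^{0},P_{\nu}e_{k}^{0}\rangle$, resp.\ $\langle e_{j}^{0},\SB e_{k}^{0}\rangle$, after which Lemma \ref{a22} yields \eqref{a69}, and replacing $e_{j}^{0}$ by $\widetilde e_{j}$ via Lemma \ref{a60}, \eqref{a43}, \eqref{a46} and Lemma \ref{a55} yields \eqref{a68}. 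Your packaging of that replacement as a conjugation $T^{*}MT$ with $T=Id+\CO(e^{-\delta/h})$ is merely a reorganization of the paper's direct substitution of the expansion \eqref{a66}, and your cautionary remark about the naive one-sided Cauchy--Schwarz bound is precisely why that expansion is written as scalar $\CO(e^{-\delta/h})$ multiples of $\widetilde e_{1},\widetilde e_{2}$ plus an $\CO(h^{\infty})e^{-S/h}$ remainder.
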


\begin{proof}
We compute these derivates using the classical trick in the reduction process (see Section II.2.3 of \cite{Ka76_01}). We can write
\begin{align}
\partial_{\tau_{\nu}} Q_{j ,k} ( 0 ) &= \partial_{\tau_{\nu}} \big( Q_{j ,k} - \delta_{j , k} \lambda_{2} \big) ( 0 ) = \partial_{\tau_{\nu}} \big\< e_{j}^{\tau} , ( P_{\tau} - \lambda_{2} ) e_{k}^{\tau} \big\> ( 0 )    \nonumber \\
&= \big\< ( \partial_{\tau_{\nu}} e_{j}^{0} ) , ( P_{0} - \lambda_{2} ) e_{k}^{0} \big\> + \big\< e_{j}^{0} , ( \partial_{\tau_{\nu}} P_{\tau} ) ( 0 ) e_{k}^{0} \big\> + \big\< e_{j}^{0} , ( P_{0} - \lambda_{2} ) ( \partial_{\tau_{\nu}} e_{k}^{0} ) \big\>  \nonumber \\
&= \big\< e_{j}^{0} , ( \partial_{\tau_{\nu}} P_{\tau} ) ( 0 ) e_{k}^{0} \big\> ,  \label{a59}
\end{align}
since $( P_{0} - \lambda_{2} ) e_{j}^{0} = ( P_{0} - \lambda_{2} ) e_{k}^{0} = 0$. For $\nu = 1 , 2 , 3$, \eqref{a59} gives
\begin{equation} \label{a67}
\partial_{\tau_{\nu}} Q_{j ,k} ( 0 ) = \big\< e_{j}^{0} , P_{\nu} e_{k}^{0} \big\> ,
\end{equation}
with $P_{\nu}$ defined in \eqref{a48}.

From \eqref{a43} and Lemma \ref{a60}, we have
\begin{equation*}
\Vert \widehat{e}_{1}^{\tau} \Vert = 1 + \CO ( e^{- \delta / h} ) \qquad \text{and} \qquad \big\Vert \widehat{e}_{2}^{\tau} - \< e_{1}^{\tau} , \widehat{e}_{2}^{\tau} \> e_{1}^{\tau} \big\Vert = 1 + \CO ( e^{- \delta / h} ) .
\end{equation*}
Using again Lemma \ref{a60}, \eqref{a65} becomes
\begin{equation} \label{a66}
\left\{ \begin{aligned}
e_{1}^{\tau} &= \widetilde{e}_{1} + \CO ( e^{- \delta / h} ) \widetilde{e}_{1} + \CO ( h^{\infty} ) e^{- S / h} ,  \\
e_{2}^{\tau} &= \widetilde{e}_{2} + \CO ( e^{- \delta / h} ) \widetilde{e}_{1} + \CO ( e^{- \delta / h} ) \widetilde{e}_{2} + \CO ( h^{\infty} ) e^{- S / h} ,
\end{aligned} \right.
\end{equation}
where the $\CO ( e^{- \delta / h} )$'s are constants. Then, \eqref{a68} follows from \eqref{a56}, \eqref{a67} and \eqref{a66}.

On the other hand, \eqref{a67} gives $\partial_{\tau_{4}} Q_{j ,k} ( 0 ) = \big\< e_{j}^{0} , \SB e_{k}^{0} \big\>$. In other words, $\partial_{\tau_{4}} Q ( 0 )$ is the operator $\Pi_{2 3} \SB \Pi_{2 3}$ expressed in the basis $( e_{1}^{0} , e_{2}^{0} )$ of $\im \Pi_{2 3}$. Then, \eqref{a69} follows directly from Lemma \ref{a22}.
\end{proof}

\begin{proof}[Proof of Theorem \ref{a70}]
From Lemma \ref{a58} and \eqref{a56}, $( \partial_{\tau_{\nu}} Q ( 0 ) )_{\nu = 1 , 2 , 3 , 4}$ is a basis of $M_{2 \times 2} ( \R )$. Thus,
\begin{equation*}
d_{0} Q : \R^{4} \simeq T_{0} \R^{4} \longrightarrow T_{\lambda_{2} Id} M_{2 \times 2} ( \R ) \simeq M_{2 \times 2} ( \R ) ,
\end{equation*}
is an isomorphism. By the inverse function theorem, $\tau \mapsto Q ( \tau )$ is a local diffeomorphism from a neighborhood of $0$ to a neighborhood of $\lambda_{2} Id$, for $h$ small enough. Note that the neighborhoods may depend on $h$. Then, there exists $\tau ( h ) \in \R^{4}$ with $\vert \tau \vert < r$ such that
\begin{equation} \label{a71}
Q ( \tau ) = \begin{pmatrix}
\lambda_{2} & \rho \\
0 & \lambda_{2}
\end{pmatrix} ,
\end{equation}
for some $\rho ( h ) \neq 0$. Since $Q$ is the operator $P_{\tau}$ restricted to its stable eigenspace $\im \Pi_{2 3}^{\tau}$ in the basis $( e_{1}^{\tau} , e_{2}^{\tau} )$, \eqref{a71} shows that $P_{\rm Jor} : = P_{\tau}$ has a non-trivial Jordan block associated with the eigenvalue $\lambda_{2}$ and Theorem \ref{a70} follows.
\end{proof}

\section{Proof of Proposition \ref{a78}} \label{s6}

We write $\CP = \CP_{0} + \varepsilon \SB$ with
\begin{equation*}
\CP_{0} = - h^{2} \Delta + x^{2} - 2 \qquad \text{and} \qquad \SB = x_{1} h \partial_{x_{2}} - x_{2} h \partial_{x_{1}} .
\end{equation*}
The operator $\CP_{0}$ is the harmonic oscillator
\begin{equation*}
\CP_{0} = d_{f}^{*} \circ d_{f} = a_{1}^{*} \circ a_{1} + a_{2}^{*} \circ a_{2} - 2 ,
\end{equation*}
with the creation operators $a_{j}^{*} = - h \partial_{x_{j}} + x_{j}$ and annihilation operators $a_{j} = h \partial_{x_{j}} + x_{j}$. The spectrum of $\CP_{0}$ is $2 h \N$ and the eigenspace associated to $2 n h$ is the $( n + 1 )$-dimensional space
\begin{equation*}
E_{n} = \Vect \big\{ {a_{1}^{*}}^{k} {a_{2}^{*}}^{n - k} e^{- f / h} ; \ k = 0 , \ldots , n \big\} .
\end{equation*}
A direct computation gives
\begin{equation*}
[ \SB , a_{1}^{*} ] = - h a_{2}^{*} \qquad \text{and} \qquad [ \SB , a_{2}^{*} ] = h a_{1}^{*} ,
\end{equation*}
showing that $E_{n}$ is stable by $\SB$. Let $\SB_{n}$ denote the restriction of $\SB$ to $E_{n}$. Summing up the previous arguments, we deduce
\begin{equation} \label{a79}
\sigma ( \CP ) = \bigcup_{n = 0}^{+ \infty} 2 n h + \varepsilon \sigma ( \SB_{n} ) ,
\end{equation}
where $\sigma ( \SB_{n} ) \subset i \R$ since $\SB_{n}$ is anti-adjoint as $\SB$.

To get the spectral gap of $\CP$, it remains to compute $\sigma ( \SB_{1} )$. In the basis $( x_{1} e^{- f / h} , x_{2} e^{- f / h} )$ of $E_{1}$, the matrix of $\SB_{1}$ takes the form
\begin{equation*}
\SB_{1} =
\begin{pmatrix}
0 & h \\
- h & 0
\end{pmatrix} .
\end{equation*}
Thus, $\sigma ( \SB_{1} ) = \{ i h , - i h \}$ and Proposition \ref{a78} follows from \eqref{a79}.

\bibliographystyle{amsplain}
\providecommand{\MR}{\relax\ifhmode\unskip\space\fi MR }
\providecommand{\MRhref}[2]{%
  \href{http://www.ams.org/mathscinet-getitem?mr=#1}{#2}
}
\providecommand{\href}[2]{#2}


\end{document}